\title{Extremal behavior of large cells in the\\ Poisson hyperplane mosaic}
\author{Moritz Otto\thanks{Department of Mathematics, 
		Aarhus University, 
		Ny Munkegade 118, 8000 Aarhus C, Denmark, email: otto@math.au.dk}
}
\newcommand{\red}{\color{red}}
\theoremstyle{plain}
\newtheorem{theorem}{Theorem}
\newtheorem{lemma}[theorem]{Lemma}
\newtheorem{remark}[theorem]{Remark}
\newtheorem{proposition}[theorem]{Proposition}
\newtheorem{definition}[theorem]{Definition}
\newtheorem{corollary}[theorem]{Corollary}
\newtheorem{example}[theorem]{Example}
\newtheorem{fig}[theorem]{Figure}
\numberwithin{equation}{section}
\newcommand{\bthe}{\begin{theorem}}
	\newcommand{\ethe}{\end{theorem}}
\newcommand{\ben}{\begin{enumerate}}
	\newcommand{\een}{\end{enumerate}}
\newcommand{\bit}{\begin{itemize}}
	\newcommand{\eit}{\end{itemize}}
\newcommand{\beq}{\begin{equation}}
\newcommand{\eeq}{\end{equation}}
\newcommand{\ble}{\begin{lemma}}
	\newcommand{\ele}{\end{lemma}}
\newcommand{\bde}{\begin{definition}\rm}
	\newcommand{\ede}{\halmos\end{definition}}
\newcommand{\bco}{\begin{corollary}}
	\newcommand{\eco}{\end{corollary}}
\newcommand{\bpr}{\begin{proposition}}
	\newcommand{\epr}{\end{proposition}}
\newcommand{\brem}{\begin{remark}\rm}
	\newcommand{\erem}{\halmos\end{remark}}
\newcommand{\bproof}{\begin{proof}[Proof]}
	\newcommand{\eproof}{\end{proof}}
\newcommand{\bexam}{\begin{example}\rm}
	\newcommand{\eexam}{\halmos\end{example}}
\newcommand{\bexamwh}{\begin{example}\rm}
	\newcommand{\eexamwh}{\end{example}}
\newcommand{\bfi}{\begin{fig}}
	\newcommand{\efi}{\end{fig}}
\newcommand{\btab}{\begin{tab}}
	\newcommand{\etab}{\end{tab}}
\newcommand{\beao}{\begin{eqnarray*}}
	\newcommand{\eeao}{\end{eqnarray*}\noindent}
\newcommand{\beam}{\begin{eqnarray}}
\newcommand{\eeam}{\end{eqnarray}\noindent}
\newcommand{\ovr}{\begin{array}}
	\newcommand{\barr}{\begin{array}}
		\newcommand{\earr}{\end{array}}
	\newcommand{\bdis}{\begin{displaymath}}
	\newcommand{\edis}{\end{displaymath}\noindent}
	\def\N{{\mathbb N}}
	\def\P{{\mathbb P}}
	\def\E{{\mathbb E}}
	\def\R{{\mathbb R}}
	\def\K{{\mathcal{K}}}
	\def\F{{\mathcal F}}
	\def\H{{\mathbb H}}
	\def\phi{\varphi}
	\def\B{{\mathcal B}}
	\def\cals_+{{\cals_+}}
	\def\cals{{\mathcal{S}}}
	\newcommand{\norm}[1]{\left\lVert#1\right\rVert}
	\def\1{\mathbf{1}}
	\newcommand{\tto}{{t\to\infty}}
	\newcommand{\al}{\alpha}
	\newcommand{\eps}{\varepsilon}
	\newcommand{\vt}{\vartheta}
	\newcommand{\halmos}{\quad\hfill\mbox{$\Box$}}
\definecolor{plum}{cmyk}{0.50,1,0,0}
\definecolor{TealBlue}{cmyk}{0.86,0,0.34,0.02}
\definecolor{OliveGreen}{cmyk}{0.64,0,0.95,0.40}
\newcommand{\Nadine}{\textcolor{red}}
\begin{document}


\maketitle

\begin{abstract} 
	We study the asymptotic behavior of a size-marked point process of centers of large cells in a stationary and isotropic Poisson hyperplane mosaic in dimension $d \ge 2$. The sizes of the cells are measured by their inradius or their $k$th intrinsic volume ($k \ge 2$), for example. We prove a Poisson limit theorem for this process in Kantorovich-Rubinstein distance and thereby generalize a result in Chenavier and Hemsley (2016) in various directions. Our proof is based on a general Poisson process approximation result that extends a theorem in Bobrowski, Schulte and Yogeshwaran (2021).
\end{abstract}

\noindent
{\em AMS 2020 Subject Classifications:}  Primary 60G55, 60F17\quad Secondary 60D05

\noindent
{\em Keywords:}
Blaschke-Petkantschin formula, Chen-Stein method, Kantorovich-Rubinstein metric, Kendall's problem, maximum cell, Palm distribution, point process approximation, Poisson hyperplane process, random mosaic

\section{Introduction}\label{s1}
	Random mosaics are important objects in both theory and practice of modern probability. They are used in a wide range of applications to model random spatial phenomena. Besides the Voronoi mosaic (and its dual, the Delaunay mosaic), the hyperplane mosaic is an important model class. In this article we study stationary and isotropic Poisson hyperplane mosaics. These are random mosaics where the generating process is a stationary and isotropic Poisson hyperplane process.

Different aspects of this mosaic have been investigated in the literature. For example, in \cite{hug2007asymptotic} it is shown that the shape of the zero cell (it is the cell containing the origin $o \in \R^d$) is with high probability close to the shape of a ball if its $k$th intrinsic volume (for some $k \ge 2$) is large. This result is the answer to the Kendall problem (formulated for the zero cell of a Poisson hyperplane mosaic) and is adapted to the typical cell of the mosaic in \cite{hug2007typical}. In \cite{chenavier2016extremes} cells with large (and small) inradius and with center in a window are considered. For dimension $d=2$ it is shown that the limit distribution of the largest and smallest order statistics for the inradii converge to a Poisson distribution when the size of the window goes to infinity.

In the present paper we considerably extend this result in various directions. We consider marked point processes of centers of large (w.r.t. the inradius or an intrinsic volume, e.g. volume, surface area) cells in a compact window in a stationary, isotropic Poisson hyperplane mosaic, where the mark is a transformation of the cell size. We study the asymptotics of a scaling of this process as the volume of the window tends to infinity. Using Stein's method (via a coupling of the marked process with a Palm version of itself), we prove convergence in the Kantorovich-Rubinstein distance to a marked Poisson process. To this end, we give an extension of \cite[Theorem 4.1]{BSY21}. In the proofs of our main Theorems 1 and 2 we face two kinds of obstacles. The first one concerns long range dependencies and comes from the fact that two cells that are arbitrarily far apart from each other can share joint facet hyperplanes. The second one deals with local dependencies in the mosaic and concerns the sizes of the clusters in which large cells appear. The typical cluster size heavily depends on the shape of large cells. For the size functions that we consider (where all extremal bodies are balls) we use the answer to Kendall's problem and obtain clusters of size one. 

Our article is structured as follows. In Section 2 we give precise definitions of the objects that we study and present our main results. In Section 3 we prove two important auxiliary lemmas that give a bound on the number of hyperplanes in a stationary, isotropic Poisson hyperplane process that hit two disjoint balls. Moreover, we present a spherical Blaschke-Petkantschin formula that generalizes \cite[Theorem 7.3.2]{schneider2008stochastic} and might be of independent interest. In Section 4 we give an extension of \cite[Theorem 6.1]{BSY21}. In Section 5 we give the proof of Theorem \ref{thmr} (where we consider the process of centers of cells with a large inradius). The remaining sections are devoted to the proof of Theorem \ref{thmsigma} (which is a generalization of the proof of Theorem \ref{thmr} to the process of centers of cells that are large with respect to a more general size function). In Section 6 we study the distribution of a size functional of the typical cell in the Poisson hyperplane mosaic in more detail. In Section 7 we construct stopping sets for large cells in the mosaic and demonstrate how they can be used to derive a decorrelation inequality. Finally, we complete the proof of Theorem \ref{thmsigma} in Section 8.

\section{Preliminaries and Main Results}
We use the notation from \cite{hug2007typical} and \cite{BSY21}. For a locally compact second countable Hausdorff (lcscH) space $(\mathbb{X},\mathcal{X})$ we write $\mathbf{N}_{\mathbb{X}}$ for the space of all $\sigma$-finite counting measures on $\mathbb{X}$ and $\widehat{\mathbf{N}}_{\mathbb{X}}$ for the space of all finite counting measures on $\mathbb{X}$. We equip $\mathbf{N}_{\mathbb{X}}$ and $\widehat{\mathbf{N}}_{\mathbb{X}}$ with their corresponding $\sigma$-algebras $\mathcal{N}_{\mathbb{X}}$ and $\widehat{\mathcal{N}}_{\mathbb{X}}$ that are induced by the mappings $\omega \mapsto \omega(B)$ for all $B \in \mathcal{X}$. These are the Borel-$\sigma$-algebras with respect to the Fell topologies on $\mathbf{N}_{\mathbb{X}}$ and $\widehat{\mathbf{N}}_{\mathbb{X}}$, respectively. For  $x \in \mathbb{X}$ let $\delta_x \in \mathbf{N}_{\mathbb{X}}$ be the Dirac measure in $x$, i.e. $\delta_x(B)=\mathbf 1_B(x)$ for $B \in \mathcal{X}$. More generally, for $\bm{x}=(x_1,\dots,x_m) \in \mathbb{X}^m$ we write $\delta_{\bm{x}}=\delta_{x_1}+\cdots+\delta_{x_m}$. 

The real Euclidean vector space $\R^d$ is equipped with the standard scalar product $\langle \cdot,\cdot \rangle$ and induced norm $\|\cdot\|$. We write $\kappa_d$ for the volume of the closed unit ball $B^d:=\{x \in \R^d:\,\|x\|\le 1\}$, $\omega_d=d \kappa_d$ for the surface area of the unit sphere $S^{d-1}:=\partial B^d=\{x \in \R^d:\,\|x\|=1\}$ and $B(z,r):=z+rB^d=\{x \in \R^d:\,\|x-z\| \le r\}$ for the closed ball with radius $r>0$ around $z \in \R^d$. The $\ell$-dimensional ($\ell \in \{0,\dots,d\}$) Lebesgue measure on an $\ell$-dimensional affine subspace of $\R^d$ is denoted by $\lambda_\ell$ and $\sigma_k$ is the normalized Lebesgue measure on a $k$-dimensional ($k \in \{0,\dots,d-1\}$) great subsphere of $S^{d-1}$. Moreover, for a linear subspace $L\subset \mathbb{R}^d$ we let $S_L:=\{u \in L:\|u\|=1\}$.

The lcscH space (with the standard topology) of hyperplanes in $\R^d$ is denoted by $\H$. Every element $H \in \mathbb{H}$ can by represented as $H=H(u,r)=\{x \in \R^d:\,\langle x,u \rangle =r\}$ for some $u \in S^{d-1}$ and $r \in \R$. For $H \in \H$ and $z \in \R^d\setminus H$ we write $H_z^-$ for the closed halfspace bounded by $H$ that contains $z$ and we denote $H^-:=H_o^-$ for the closed halfspace bounded by $H$ (not passing through the origin $o \in \R^d$) that contains $o$. 

In this article all random objects are defined on some fixed probability space $(\Omega, \mathbf{A},\P)$ and $\eta$ is a stationary and isotropic Poisson hyperplane process in $\R^d$. This is a random element in the space $\mathbf{N}_\H$. As usual we write $\eta$ for the simple (induced) counting measure and its support. The distribution of $\eta$ is invariant under rotations and translations and its intensity measure $\E \eta$ is of the form $\E \eta=\gamma \mu_{d-1}$, where $\gamma>0$ is the intensity of $\eta$ and $\mu_{d-1}$ is the motion invariant measure on $\H$ given by
\begin{align}
	\mu_{d-1}(\cdot)=2 \int_{S^{d-1}} \int_0^\infty \mathbf{1}\{H(u,r)\in \cdot \} \,\mathrm{d}r\,\sigma_{d-1}(\mathrm{d}u). \label{repmu}
\end{align}

The space $\mathcal{K}^d$ of convex bodies (non-empty, compact, convex subsets of $\R^d$) is endowed with the Hausdorff metric. For $K \in \mathcal{K}^d$ we write $\H_K:=\{H \in \H:\,H \cap K \neq \emptyset\}$ and $\H^K:=\{H \in \H:\,H \cap K = \emptyset\}$. The random number $\eta(\H_K)$ of hyperplanes passing through $K$ follows a Poisson distribution with parameter $\gamma \Phi(K)$ with
\begin{align}
	\Phi(K):=2\int_{S^{d-1}} h(K,u)\,\sigma_{d-1}(\mathrm{d}u), \label{defphi}
\end{align}
where $h(K,\cdot)$ is the support function of $K$.

In this article we study point processes related to large cells in Poisson hyperplane mosaics. Let $\omega \in \mathbf{N}_\mathbb{H}$ be a locally finite system of hyperplanes in $\mathbb{R}^d$ in general position. This means that every $k$-dimensional plane of $\mathbb{R}^d$ is contained in at most $d-k$ hyperplanes of $\omega$. The closures of the connected components of the complement of $\bigcup_{H \in \eta} H$ in $\R^d$ are called {\em cells} of the {\em mosaic} generateed by $\omega$. For a tuple $\bm{H}=(H_1,\dots,H_{d+1})$ of $d+1$ pairwise distinct hyperplanes in $\omega$ let $B(\bm{H})$ be the closed inball of the unique simplex $\Delta(\bm{H})$ for which $H_1,\dots,H_{d+1}$ are the facet hyperplanes. Moreover, let $z(\bm{H})$ be the center and $r(\bm{H})$ be the radius of $B(\bm{H})$. Since $\omega$ is in general position, the inballs of the cells are unique and every inball is touched by precisely $d+1$ hyperplanes in $\omega$. For every cell this allows us to find unique (up to permutations) facet hyperplanes $H_1,\dots,H_{d+1} \in \omega$. We write
\begin{align}
	C(\bm{H},\omega):= \Delta(\bm{H})\cap \bigcap_{H \in \omega \cap \H^{B(\bm{H})}} H_{z(\bm{H})}^-. \label{defC}
\end{align}
It is easy to see that almost surely any $d+1$ hyperplanes of $\eta$ are in general position. The resulting Poisson hyperplane mosaic is stationary and isotropic and has intensity 
\begin{align}
	\gamma^{(d)}:=\frac{(2 \gamma)^d}{d+1}\int_{\mathsf{P}}  \Delta_d(\bm{u})  \sigma_{d-1}^{d+1}(\mathrm{d}\bm{u}) \label{defgammad}
\end{align}
(see \cite[Section 10.3]{schneider2008stochastic}), where $\mathsf{P} \subset (S^{d-1})^{d+1}$ is the set of all $(d+1)$-tuples of unit vectors not lying in a closed hemisphere and $\Delta_d(\bm{u})$ is the $d$-dimensional volume of the convex hull of $u_1,\dots,u_{d+1}$. 

For $n \ge 1$ and $\omega \in \mathbf{N}_{\H}$ we consider the point process 
\begin{align}
	\zeta_{n}[\omega]=\frac{1}{(d+1)!} \sum_{\bm{H}\in \omega^{(d+1)}}&  \mathbf{1}\{(\omega-\delta_{\bm{H}})\cap \H_{B(\bm{H})}= \emptyset \}\,\delta_{(n^{-1/d}z(\bm{H}),2 \gamma r(\bm{H})-\log n)},\label{defzeta}
\end{align}
where $\omega^{(d+1)}$ is the set of all $(d+1)$-tuples of hyperplanes of $\omega$ with pairwise distinct entries. Hence, $\zeta_{n}\equiv \zeta_n[\eta]$ is a scaling of the process of inball centers of cells, marked by a transformation of their radius. 

Let $\phi$ be the Borel measure on $\R$ given by $\varphi((c,\infty))=e^{-c}$ for all $c\in \mathbb{R}$.
We compare the process $\zeta_n$ with a Poisson process $\nu$ in the lcscH space $\mathbb{Y}:=\R^d \times \mathbb{R}$ restricted to $W \times (c,\infty)$ for some compact $W \subset \R^d$. We consider the Kantorovich-Rubinstein (KR) distance between the distributions of two finite point processes $\zeta$ and $\nu$ that is given by
\begin{align*}
	\mathbf{d_{KR}}(\zeta, \nu) := \sup_{h \in \text{Lip}(\mathbb{Y})}|\E h(\zeta)-\E h(\nu)|,
\end{align*}
where $\text{Lip}(\mathbb{Y})$ is the class of all measurable 1-Lipschitz functions $h:\widehat{\mathbf{N}}_{\mathbb{Y}} \to \R$ with respect to the total variation between measures $\omega_1, \omega_2$ on $\mathbb{Y}$ given by
\begin{align*}
	d_{\text{TV}}(\omega_1,\omega_2):=\sup |\omega_1(A)-\omega_2(A)|,
\end{align*}
where the supremum is taken over all Borel sets $A \subset \mathbb{Y}$ with $\omega_1(A), \omega_2(A)<\infty$.
The KR distance between two point processes $\xi,\zeta$ dominates their total variation distance
\begin{align*}
	\mathbf{d_{TV}}(\xi,\zeta):=\sup_{A \in \mathcal{N}_{\mathbb{Y}}} |\P(\xi \in A)-\P(\zeta \in A)|,
\end{align*}
where $\mathcal{N}_{\mathbb{Y}}$ is the standard $\sigma$-algebra on the space of $\sigma$-finite counting measures on $\mathbb{Y}$ (see \cite{decreusefond2016functional}).
\begin{theorem}\label{thmr}
	Let $\zeta_{n}\equiv \zeta_n[\eta]$ be as above and let $\nu$ be a Poisson process on $\R^d \times \mathbb{R}$ with intensity measure $\gamma^{(d)} \lambda_d \otimes \varphi$.  For all $c \in \mathbb{R}$ and all compact $W \subset \R^d$ there exists a constant $C>0$ such that
	\begin{align*}
		\mathbf{d_{KR}}(\zeta_n \cap (W \times (c,\infty)), \nu \cap (W \times (c,\infty))) \le C n^{-\delta^*} (\log n)^{d+1}
	\end{align*}
	for $n$ large enough, where $\delta^*=\delta^*(d)\in (0,1/d)$ is the solution of the fixed point equation
	\begin{align}
		\delta=\frac{\omega_{d-1}}{\omega_d} \int_{\frac{2+\delta}{3+\delta}}^1(1-x^2)^{\frac{d-3}{2}} \,\mathrm{d}x.\label{eqn:fix}
	\end{align}
\end{theorem}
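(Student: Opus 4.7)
The plan is to apply the general Poisson process approximation theorem developed in Section~4 (the extension of \cite[Theorem 6.1]{BSY21}) to the restricted process $\zeta_n\cap(W\times(c,\infty))$. That theorem bounds $\mathbf{d_{KR}}$ by three ingredients: (i) a first-moment deviation, (ii) a local Palm-coupling term, and (iii) a long-range decorrelation term built from suitable stopping sets, the last of which has to cope with the fact that cells arbitrarily far apart can share facet hyperplanes.

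\textbf{First moment.} The multivariate Mecke formula applied to \eqref{defzeta}, together with the void probability $\P(\eta(\H_{B(\bm H)})=0)=e^{-\gamma\Phi(B(\bm H))}$ and the identity $\Phi(B(z,r))=2r$ (immediate from \eqref{defphi} since $h(B(z,r),u)=\langle z,u\rangle+r$ and the linear part integrates to zero against $\sigma_{d-1}$), gives
\begin{align*}
\E\zeta_n(A\times(c,\infty))=\frac{\gamma^{d+1}}{(d+1)!}\int \mathbf 1\{n^{-1/d}z(\bm H)\in A,\,2\gamma r(\bm H)-\log n>c\}\,e^{-2\gamma r(\bm H)}\,\mu_{d-1}^{d+1}(\mathrm d\bm H).
\end{align*}
Setting $s=2\gamma r(\bm H)-\log n$ converts $e^{-2\gamma r(\bm H)}$ into $n^{-1}e^{-s}$, exactly cancelling the Jacobian $n$ coming from the rescaling $\tilde z=n^{-1/d}z$. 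Disintegrating $\bm H$ via the affine Blaschke--Petkantschin formula into (inball center, inradius, facet unit normals) and identifying the direction integral over $\mathsf P$ with the constant in \eqref{defgammad} yields $\gamma^{(d)}\lambda_d(A)\int_c^\infty e^{-s}\mathrm ds=\gamma^{(d)}(\lambda_d\otimes\varphi)(A\times(c,\infty))$, so the intensity matches $\nu$ on the nose.

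\textbf{Local coupling.} I would realize ingredient~(ii) by coupling $\zeta_n[\eta]$ with $\zeta_n[\eta+\delta_{\bm H}]$ at a typical Palm point $\bm H$ through the addition of those $d+1$ hyperplanes. The two configurations differ only on cells of $\eta$ whose inballs meet $\bigcup_{i=1}^{d+1}H_i$ plus the new cell attached to $\bm H$. The mark constraint forces $r(\bm H)\asymp(\log n)/(2\gamma)$, so any second cell creating a perturbation of order one in the KR bound must itself have inradius of comparable size. The answer to Kendall's problem ensures extremal cells are close to balls, and two disjoint balls of radius $\asymp\log n$ with centers at moderate distance force many hyperplanes to hit both, a rare event handled by the first auxiliary lemma of Section~3. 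This suffices to show that clusters of large cells are asymptotically singletons and keeps the local coupling contribution small.

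\textbf{Long-range decorrelation, the main obstacle.} Unlike Voronoi or Boolean models, a hyperplane mosaic possesses no natural compactly supported stopping set: two cells arbitrarily far apart can share a facet. Using \eqref{repmu} one computes that the $\mu_{d-1}$-measure of hyperplanes hitting two disjoint balls of common radius $r$ whose centers are at distance $\rho$ reduces, up to a spherical integration, to a cap whose area carries precisely the integrand $(1-x^2)^{(d-3)/2}$ of \eqref{eqn:fix}. Requiring the expected number of hyperplanes shared by the facet tuples of two such cells to stay below $\delta\log n$ forces $\rho/r\gtrsim(3+\delta)/(2+\delta)$, and balancing this short-range coupling cost against the large-deviation decay in the shared-hyperplane count produces exactly the fixed point equation \eqref{eqn:fix}. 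The polylogarithmic correction $(\log n)^{d+1}$ accumulates from one $\log n$ for the linear size of the localization ball together with $d$ further factors of $\log n$ from volume estimates on $B(\bm H)$. Feeding the three bounds into the approximation theorem of Section~4 yields the claimed rate $Cn^{-\delta^*}(\log n)^{d+1}$.
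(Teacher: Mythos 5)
Your high-level strategy does match the paper's: apply the generalized Poisson process approximation theorem from Section~4 with the functionals $g,f$ from \eqref{defzeta}, verify that the intensity measures agree exactly (so the $d_{\mathrm{TV}}(\mathbf{L},\mathbf{M})$ term vanishes), and then bound the remaining error terms $E_i$. Your first-moment computation is essentially the paper's derivation of \eqref{intmeas}. However, the proposal has two concrete misconceptions and otherwise omits the load-bearing estimates.

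First, you invoke Kendall's problem (``the answer to Kendall's problem ensures extremal cells are close to balls'') to control the cluster sizes. That is not used in the proof of Theorem~\ref{thmr}. Kendall's problem enters only in Theorem~\ref{thmsigma}, where the size functional $\Sigma$ is general and one needs \eqref{Ken} to relate $\Sigma$-large cells to balls. For the inradius, the paper never needs this: the functional $g$ directly controls $r(\bm H)$, and the stopping set is chosen deterministically as $\mathcal S(\bm H,\omega)=\mathbb H_{B(\bm H)}$ (the set of hyperplanes meeting the inball). This also contradicts your claim that ``a hyperplane mosaic possesses no natural compactly supported stopping set'': $\mathbb H_{B(\bm H)}$ is a compact subset of $\mathbb H$, it works without modification, and with $S_{\bm H}=\mathbb H_{B(\bm H)}$ one gets $\tilde g=g$, hence $E_1=0$, and $g(\bm G,\omega+\delta_{\bm G})=1 \Rightarrow \omega\cap\mathbb H_{B(\bm G)}=\emptyset$ forces $E_3=E_4=0$. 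The long-range dependence you correctly identify (shared facet hyperplanes) is absorbed into the $E_6$ term, not into the choice of stopping set.

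Second, the proposal never establishes the quantitative estimates that produce the rate. The crux of the paper's argument for $E_5$ and $E_6$ is a case split over the joint distribution of $(r(\bm H),r(\bm G),\|z(\bm H)-z(\bm G)\|)$: one case where $\max\{r(\bm H),r(\bm G)\}>\tfrac{(2+\delta)(c+\log n)}{2\gamma}$, controlled via Lemma~\ref{Le:le1}(a) giving order $n^{-\delta}\log n$; one where both radii are moderate but the centers are within $(\log n)^{(d+1)/d}$, controlled via Lemma~\ref{Le:le1}(b), giving order $n^{-(1-L(1/(2+\delta)))}(\log n)^{d+1}$; and one where the centers are far apart, controlled via the explicit bound \eqref{mubou} on $\mu_{d-1}(\mathbb H_{B(\bm H)}\cap\mathbb H_{B(\bm G)})$ giving order $n^{-1/d}(\log n)^2$. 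Equating the first two exponents is exactly the fixed-point equation \eqref{eqn:fix}, and the $(\log n)^{d+1}$ factor comes precisely from $D^d$ with $D=(\log n)^{(d+1)/d}$ in Lemma~\ref{Le:le1}(b), not from ``one $\log n$ for the localization ball plus $d$ volume factors.'' The spherical Blaschke--Petkantschin formula (Lemma~\ref{bplem}) is indispensable for the $E_2$ bound and for the $|I|=1$ summands of $E_6$, yet it does not appear in your proposal. So while the skeleton is right, the proposal neither identifies the correct splitting mechanism nor supplies the integral-geometric lemmas that actually produce the exponent $\delta^*$ and the logarithmic power.
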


In the second part of this article we study point processes of centers of cells  that are large with respect to more general functions. Let $\Sigma:\mathcal{K}^d \to \R$ be continuous (with respect to the Hausdorff metric), homogeneous of degree $k$ for some $k>0$, not identically zero and increasing under set inclusion. Such functions are called {\em size functions} in  \cite{hug2007typical}. Additionally, we assume that there is a constant $c_0>0$ such that $V_d(K) \le c_0 \Sigma(K)^{d/k}$ for all $K \in \mathcal{K}^d$, where $V_d(K)$ is the volume of $K$. Note that $\Sigma$ and $\Phi$ satisfy the sharp isoperimetric inequality
\begin{align}
	\Phi(K)\ge \tau \Sigma(K)^{1/k},\quad K \in \mathcal{K}^d, \label{iso}
\end{align}
for some constant $\tau >0$ (see \cite[Section 3]{hug2007asymptotic}). That this inequality is sharp means that there exists some $K \in \mathcal{K}^d$ such that equality holds in \eqref{iso}. Every such body is called an {\em extremal body} (for given $\Sigma$ and $\Phi$). For example, for $\Sigma=V_d$ we have $\tau=2\kappa_d^{1/d}$. We assume that all extremal bodies of $\Sigma$ are Euclidean balls. For instance, all intrinsic volumes $V_k$ ($k \in \{2,\dots,d\}$) have this property ($V_d$ is the volume and $2V_{d-1}$ is the surface area; see \cite[Section 14.2]{schneider2008stochastic}).

Let $Z$ be the typical cell of the random mosaic generated by $\eta$. It can be understood as a cell picked uniformly at random from all cells centered in a large compact region and will be defined rigorously in Section 6. Let $F$ denote the distribution function of $\Sigma(Z)$ and set $G:=1/(1-F)$. Since $F$ is continuous and strictly increasing on $[\gamma^{-1/k},\infty)$  (see Lemma \ref{sigmatrans}), $G$ is well-defined and invertible on $[\gamma^{-1/k},\infty)$. By definition of $G$, we have 
\begin{align}
	\P(G(\Sigma(Z))>u)=u^{-1},\quad u>G^{-1}(\gamma^{-1/k}), \label{parZ}
\end{align}
i.e.\ $G(\Sigma(Z))$ is Pareto(1)-distributed.

For $n \ge 1$ and $\omega \in \mathbf{N}_{\H}$ we consider the point process 
\begin{align}
	\xi_{n}[\omega]:=\frac{1}{(d+1)!} \sum_{\bm{H}\in \eta^{(d+1)}}& \mathbf{1}\{(\eta-\delta_{\bm{H}}) \cap \H_{B(\bm{H})} = \emptyset\}\, \delta_{(n^{-1/d}z(\bm{H}),n^{-1} G(\Sigma(C(\bm{H},\eta))))}.\label{defxi}
\end{align}

Let $\xi_n\equiv \xi_n[\eta]$ and $\psi$ be the Borel measure on $(0,\infty)$ given by $\psi((a,b))= a^{-1}-b^{-1}$ for $0<a<b<\infty$.

\begin{theorem} \label{thmsigma}
	Let $\xi_{n}\equiv \xi_n[\eta]$ be as above and let $\nu$ be a Poisson process on $\R^d \times (0,\infty)$ with intensity measure $\gamma^{(d)} \lambda_d \otimes \psi$. There is some $b\in (0,\delta^*)$ such that for all $c>0$ and all compact $W \subset \R^d$ there exists a constant $C>0$ such that for all $n$ large enough we have
	\begin{align*}
		\mathbf{d_{KR}}(\xi_{n} \cap (W \times (c,\infty)),\nu \cap (W \times (c,\infty)))\le Cn^{-b}.
	\end{align*}
	Here, $\delta^*=\delta^*(d)$ is the solution of the fixed point equation \eqref{eqn:fix}.
\end{theorem}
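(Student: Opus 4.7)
The strategy is to mimic the architecture used for Theorem~\ref{thmr} and apply the general Poisson process approximation bound announced for Section~4 (the extension of \cite[Theorem 6.1]{BSY21}) to the marked point process $\xi_n$ restricted to $W\times(c,\infty)$. That bound will split into three contributions: an intensity/self-overlap term, a local clustering term quantifying the interaction between two admissible $(d{+}1)$-tuples that share at least one hyperplane, and a long-range decorrelation term comparing $\xi_n$ with a reduced Palm version through a coupling. The goal is to control all three at a common polynomial rate $n^{-b}$ for some $b\in(0,\delta^*)$ slightly below the fixed-point exponent of \eqref{eqn:fix}.

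\emph{Intensity.} The definition $G=1/(1-F)$ forces $G(\Sigma(Z))$ to be Pareto$(1)$, so that, via the Mecke formula and the classical Blaschke--Petkantschin formula, the first-moment measure of $\xi_n$ restricted to $W\times(c,\infty)$ converges to $\gamma^{(d)}\lambda_d(W)\,c^{-1}$ as $n\to\infty$. The speed of this convergence is controlled by the quantitative description of the distribution of $\Sigma(Z)$ to be developed in Section~6 together with the isoperimetric inequality \eqref{iso}.

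\emph{Local clustering.} Here the assumption that every extremal body of $\Sigma$ is a Euclidean ball is decisive: by the solution to Kendall's problem in \cite{hug2007asymptotic,hug2007typical}, a cell with $\Sigma(C(\bm{H},\eta))>s$ is with overwhelming probability close in shape to a ball of radius $r_s\asymp s^{1/k}$ as $s\to\infty$. Feeding this approximate ball shape into the spherical Blaschke--Petkantschin formula from Section~3, the probability that two admissible tuples $\bm{H},\bm{H}'$ both produce cells with $\Sigma$-values above $G^{-1}(cn)$ and share at least one facet hyperplane reduces to a geometric integral on $S^{d-1}$ whose exponent is exactly the right-hand side of \eqref{eqn:fix}. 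This yields a bound of order $n^{-\delta^*}$ up to logarithmic factors, and in particular shows that typical clusters of large cells have size one, so that no additional Chen--Stein correction is needed.

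\emph{Long-range decorrelation and main obstacle.} For the third contribution I would invoke the stopping-set construction of Section~7: for each admissible tuple $\bm{H}$ one obtains a random compact set $S(\bm{H})$ such that the event $\{\Sigma(C(\bm{H},\eta))>G^{-1}(cn)\}$ is measurable with respect to $\eta\cap\H_{S(\bm{H})}$. Coupling $\eta$ with a reduced Palm version by resampling the hyperplanes that miss $S(\bm{H})$ and applying the decorrelation inequality of Section~7 then expresses the residual error as a tail probability for the diameter of $S(\bm{H})$. The main obstacle, and the reason one can only save $n^{-b}$ with $b<\delta^*$ strictly, is to control this tail: Kendall-type shape stability holds only with high probability, not surely, so it must be combined with the quantitative bounds on hyperplanes hitting two disjoint balls from Section~3 to show that an anomalously large stopping set has probability decaying faster than the rate at which the intensity and clustering terms are being saved. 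Choosing $b$ strictly smaller than $\delta^*$ then balances the three error sources and delivers the stated bound.
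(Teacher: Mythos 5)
Your plan reproduces the paper's architecture: apply the extended Poisson process approximation theorem (Theorem 4) to $\xi_n$ with a stopping set based on a random circumradius bound, match the intensity exactly via the Pareto$(1)$ transformation of $\Sigma(Z)$, exclude local clustering using the Kendall-type shape stability bound \eqref{Ken}, and treat long-range dependence through a Palm coupling together with the ball-hitting estimates of Section 3. This is indeed the route the paper takes.

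One piece of your sketch misattributes where the fixed-point exponent comes from. You locate $\delta^*$ in the term where two admissible $(d{+}1)$-tuples share a facet hyperplane, reduced via the spherical Blaschke--Petkantschin formula. In the paper that shared-hyperplane term (the $|I|\ge 1$ part of $E_6$, and $E_2$) is actually the \emph{good} term, giving rate $n^{-1/d}(\log n)^{O(1)}$ once one integrates out the common hyperplane. The fixed-point equation \eqref{eqn:fix} instead arises from the \emph{disjoint}-tuple contribution: one partitions by cell size and center separation, and the exponent $\delta^*$ is obtained by balancing the cost of demanding $\max(r(\bm H),r(\bm G))>(2+\delta)\frac{\log n}{2\gamma}$ (rate $n^{-\delta}$, from Lemma 6(a)) against the decorrelation gain $1-L(\frac{1}{2+\delta})$ for cells of comparable inradius that are far apart but both conditioned to be large (Lemma 6(b)). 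Your concluding observation — that $b<\delta^*$ strictly, because the Kendall stability bound \eqref{Ken} and the event that the stopping set is anomalously large only give a polynomial rate governed by the stability function $s$ — is exactly the bottleneck the paper identifies in $E_1$ and in the intermediate-distance/small-inradius regime (the $I_3$ piece via Lemma 12), and it is why the paper leaves $b$ inexplicit. So the plan would succeed, but in executing it you would need to reassign the roles of the two geometric mechanisms as above, and add the preliminary Lemma 9 guaranteeing that $F$ is continuous and strictly increasing so that $G$ is well defined, together with the asymptotic $\tau\gamma G^{-1}(nc)^{1/k}\sim\log n$ that lets you reuse the inradius-scale estimates from Theorem 1.
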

In the proof of Theorem \ref{thmsigma} we combine probabilistic bounds used in the proof of Theorem \ref{thmr} with geometric estimates. Intuitively spoken, we exploit that the shape of a large typical cell in $\textsf{m}$ is with high probability close to the shape of a ball. This fact is known as Kendall's conjecture that was answered in a series of articles (see e.g. \cite{hug2007asymptotic}, \cite{hug2007typical}). This brings us in a position where we can argue similarly to the proof of Theorem \ref{thmr}. The exponent $-b$ on the right-hand side of the bound in Theorem \ref{thmsigma} depends on the considered size function $\Sigma$ and the stability function $s$ (see Section 6). Our proof method does not allow us to make $b$ more explicit. We leave this as an open problem.

{\em Outlook.} We believe that many of the techniques developed in this article might be of an independent interest and be useful to establish further asymptotic results for (Poisson) hyperplane mosaics. One direction could be to consider score sums defined by Poisson hyperplanes and to investigate whether a central limit theorem holds for score functions depending on the cell sizes. This would generalize existing normal approximation results in \cite{heinrich2006central}. Another line of further research could be to establish Poisson process approximation for Poisson hyerplane mosaics defined in the hyperbolic space (see \cite{herold2021central} and \cite{otto2022hyperbolic} for recent results on Poisson and normal approximation in this space). 

\section{Some Integral Geometry}

\subsection{Geometry of random hyperplanes}
In the proofs of Theorem \ref{thmr} and Theorem \ref{thmsigma} we will need to control the number of hyperplanes passing through two fixed balls. The following lemma gives the probability that a random hyperplane with uniformly distributed normal vector and in distance $r>0$ from some point $z \in \R^d$ passes through a fixed ball.

\begin{lemma} \label{lem1}
	Let $w,z \in \R^d$ and $r,s>0$. Then we have
	\begin{align*}
		\int_{S^{d-1}} \mathbf{1}\{H(u,\langle z,u \rangle+ r)\in \mathbb{H}_{B(w,s)}\} \,\sigma_{d-1}(\mathrm{d}u) =\frac{\omega_{d-1}}{\omega_{d}} \int_{\max(\frac{r-s}{\|w-z\|},-1)}^{\min(\frac{r+s}{\|w-z\|},1)} (1-x^2)^{\frac{d-3}{2}} \mathrm{d}x.
	\end{align*}
\end{lemma}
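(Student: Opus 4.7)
The plan is to unwind the definition of $H(u, \langle z, u\rangle + r)$ and reduce the integral over $S^{d-1}$ to a one-dimensional integral over a spherical band, then convert to the variable $x = \langle u, (w-z)/\|w-z\|\rangle$.

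First I would observe that $H(u, \langle z,u\rangle + r) = \{x \in \R^d: \langle x-z, u\rangle = r\}$, so the (unsigned) Euclidean distance from $w$ to this hyperplane is $|\langle w-z, u\rangle - r|$. Consequently $H(u, \langle z,u\rangle + r) \in \mathbb{H}_{B(w,s)}$ if and only if $|\langle w-z, u\rangle - r| \le s$, i.e.
\[
r - s \;\le\; \langle w-z, u\rangle \;\le\; r + s.
\]
Setting $\rho := \|w-z\|$ (the case $w = z$ is trivial and handled separately if necessary) and $v := (w-z)/\rho$, this becomes
\[
\tfrac{r-s}{\rho} \;\le\; \langle u, v\rangle \;\le\; \tfrac{r+s}{\rho}.
\]

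Next I would use the rotation invariance of $\sigma_{d-1}$ to replace $v$ by a fixed reference direction, say $e_d$, and parametrize $S^{d-1}$ by polar coordinates: write $u = \cos\theta \, e_d + \sin\theta \, u'$ with $\theta \in [0,\pi]$ and $u' \in S^{d-2}$. In these coordinates $\langle u, e_d\rangle = \cos\theta$, and since $\sigma_{d-1}$ is normalized the surface element factors as $\omega_d^{-1} \omega_{d-1} \sin^{d-2}\theta \, \mathrm{d}\theta \, \sigma_{d-2}(\mathrm{d}u')$. Integrating out $u' \in S^{d-2}$ (which contributes a factor $1$ since $\sigma_{d-2}$ is normalized), the integral reduces to
\[
\frac{\omega_{d-1}}{\omega_d} \int_{\{\theta:\, \cos\theta \in [(r-s)/\rho,\,(r+s)/\rho]\}} \sin^{d-2}\theta \,\mathrm{d}\theta.
\]

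Finally I would substitute $x = \cos\theta$, so that $\mathrm{d}x = -\sin\theta \,\mathrm{d}\theta$ and $\sin^{d-2}\theta\, \mathrm{d}\theta = -(1-x^2)^{(d-3)/2}\, \mathrm{d}x$. Taking the intersection of $[(r-s)/\rho,\, (r+s)/\rho]$ with $[-1,1]$ (which produces the $\max$ and $\min$ truncations in the stated integration limits, and automatically handles the cases $r-s < -\rho$ or $r+s > \rho$ when the hyperplane passes through the ball on both sides), we arrive at
\[
\frac{\omega_{d-1}}{\omega_d} \int_{\max((r-s)/\rho,\,-1)}^{\min((r+s)/\rho,\,1)} (1-x^2)^{\frac{d-3}{2}} \,\mathrm{d}x,
\]
which is the desired identity. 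There is no real obstacle here; the only thing to be careful about is the bookkeeping of the truncation at $\pm 1$ and the factor $\omega_{d-1}/\omega_d$ coming from the normalization conventions for $\sigma_{d-1}$ and $\sigma_{d-2}$ used in the paper.
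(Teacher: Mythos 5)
Your proposal is correct and follows essentially the same route as the paper: reduce the indicator to $\langle w-z,u\rangle\in[r-s,r+s]$, decompose the spherical integral into a polar angle $\theta$ and a great subsphere, integrate out the subsphere, and substitute $x=\cos\theta$. The only difference is cosmetic—the paper invokes the spherical Fubini formula (Schneider–Weil, Lemma 6.5.1) with $S=S^{d-1}\cap(\mathrm{lin}\{w-z\})^{\perp}$, whereas you write out the polar-coordinate decomposition and the normalization factor $\omega_{d-1}/\omega_d$ by hand.
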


\begin{proof}
	We use that the left-hand side only depends on $w$ and $z$ through $w-z$ and write out the definition of $H(u,r)$. This gives
	\begin{align*}
		&\int_{S^{d-1}} \mathbf{1}\{H(u,\langle z,u \rangle+ r)\in \mathbb{H}_{B(w,s)}\} \,\sigma_{d-1}(\mathrm{d}u)\\
		&\quad = \int_{S^{d-1}} \mathbf{1}\{H(u, r)\in \mathbb{H}_{B(w-z,s)}\} \,\sigma_{d-1}(\mathrm{d}u)\\
		&\quad=\int_{S^{d-1}} \mathbf{1}\{\langle w-z,u \rangle \in [r-s,r+s]\} \,\sigma_{d-1}(\mathrm{d}u).
	\end{align*}
	Choosing $S:=S^{d-1}\cap (\text{lin}\{w-z\})^\perp$ (where ``lin'' stands for the linear hull) in \cite[Lemma 6.5.1]{schneider2008stochastic} and substituting $x=\cos (t)$ in a second step, we find that the above is given by
	\begin{align*}
		&\frac{\omega_{d-1}}{\omega_d}\int_{S} \int_0^\pi \sin^{d-2}(t) \mathbf{1}\{\cos(t)\|w-z\| \in [r-s,r+s] \}\,\mathrm{d}t\,\sigma_{d-2}(\mathrm{d}v)\\
		&\quad =\frac{\omega_{d-1}}{\omega_d} \int_{-1}^1 (1-x^2)^{\frac{d-3}{2}} \mathbf{1}\{x\|w-z\|\in [r-s,r+s]\}\,\mathrm{d}x,
	\end{align*}
	which implies the assertion.
\end{proof}

Fixing $r>0$, $\|w-z\|=r+s$ and letting $s \to \infty$ in Lemma \ref{lem1} shows that 
\begin{align}
	1=\frac{\omega_{d-1}}{\omega_{d}} \int_{-1}^{1} (1-x^2)^{\frac{d-3}{2}} \mathrm{d}x.\label{lem1lim}
\end{align}

In the next remark we give a bound on the expected number of hyperplanes passing through two disjoint balls. Note that in dimension $d=2$, \cite[Lemma 4.2]{chenavier2016extremes} provides an upper bound on this number and that \cite{santalo2004integral} gives the exact number using an explicit geometric construction that does not seem to work in higher dimension.

\begin{remark} \rm \label{lem2}
	Let $w,z \in \R^d$ and $r,s>0$ with $r+s \le \|w-z\|$. Then we find from the definition of the measure $\mu_{d-1}$ (see \eqref{repmu}) and Lemma \ref{lem1} that
	\begin{align}
		\mu_{d-1}(\mathbb{H}_{B(z,r)}\cap \mathbb{H}_{B(w,s)})&=\frac{2 \omega_{d-1}}{\omega_d} \int_0^s  \int_{\frac{t-r}{\|w-z\|}}^{\frac{t+r}{\|w-z\|}} (1-x^2)^{\frac{d-3}{2}} \mathrm{d}x \,\mathrm{d}t\nonumber\\
		&\le \frac{2s \omega_{d-1}}{\omega_d} \int_{\frac{-r}{s+r}}^{1} (1-x^2)^{\frac{d-3}{2}} \mathrm{d}x.\label{estmuHH}
	\end{align}
	For $a\in (0,1)$ let $L(a):=\frac{\omega_{d-1}}{\omega_d} \int_{\frac{-1}{1+a}}^{1} (1-x^2)^{\frac{d-3}{2}} \mathrm{d}x$ and note that $L(a)<1$ by \eqref{lem1lim}. From \eqref{estmuHH} we find for $ar \le s$ that
	\begin{align}
		\mu_{d-1}(\mathbb{H}_{B(z,r)}\cap \mathbb{H}_{B(w,s)}) \le 2L(a)s. \label{lem2C}
	\end{align}
\end{remark}

\begin{remark} \rm
	Note that for $d=3$ we instantly find from the equality in Remark \ref{lem2} that
	\begin{align*}
		\mu_{2}(\mathbb{H}_{B(z,r)}\cap \mathbb{H}_{B(w,s)})= \frac{2rs}{\|w-z\|},\quad r,s>0,\,r+s \le \|w-z\|.
	\end{align*}
\end{remark}

\subsection{A spherical Blaschke-Petkantschin formula}
The following lemma of spherical Blaschke-Petkantschin type is a generalization of  \cite[Theorem 7.3.2]{schneider2008stochastic} to the situation where $\ell \le d$ hyperplanes are fixed and the integration over the remaining $d+1-\ell$ hyperplanes is carried out. To formulate the statement, we need to introduce some notation. For $v_1,\dots,v_\ell \in \mathbb{R}^d$ let $\nabla_\ell(v_1,\dots,v_\ell)$ denote the $\ell$-dimensional volume of the paralleliped spanned by $v_1,\dots,v_\ell$. Moreover, we define for $v_1,\dots,v_{\ell+1}\in \mathbb{R}^{d+1}$ the number 
\begin{align*}	
	\Delta_\ell(v_1,\dots,v_{\ell+1}):=\frac1 {\ell!} \nabla_\ell(v_1-v_{\ell+1},\dots,v_\ell-v_{\ell+1}).
\end{align*}
Hence, $\Delta_\ell(v_1,\dots,v_\ell)$ is the $\ell$-dimensional volume of the convex hull of $\{v_1,\dots,v_{\ell+1}\}$. In the following we use the abbreviations $u_{1:\ell}:=(u_1,\dots,u_\ell)$ and
\begin{align*}
	H(u_{\ell+1:d+1},\bm{\tau}):=(H(u_{\ell+1},\langle z,u_{\ell+1} \rangle +r),\dots,H(u_{d+1},\langle z,u_{d+1} \rangle +r)),
\end{align*}
where $z$, $\ell$ and $r$ are always clear from the context. Recall the definition of $\mathsf{P}$ from Section 2. Given $H \in \mathbb{H}$ with (unit) normal vector $u$, we write $\sigma_0$ for the (discrete) uniform distribution on $S_{H^\perp}=\{-u,u\}$. The underlying hyperplane $H$ will always be clear from the context.

\begin{lemma} \label{bplem}
	Let $1\le \ell \le d$ and $f:\H^{d+1} \to [0,\infty)$ be a measurable function. We have
	\begin{align*}
		\int_{\H^{d+1}} f \mathrm{d}\mu_{d-1}^{d+1}&= 2^{d+1} d! \int_{\H^{\ell}} \int_{\bigtimes_{i=1}^\ell S_{H_i^\perp}} \int_0^\infty \int_{(S^{d-1})^{d+1-\ell}} \int_{\bigcap_{i=1}^\ell (H_i-ru_i)}   f(H_{1:\ell},H(u_{\ell+1:d+1},\bm{\tau}))\nonumber\\
		& \times  \frac{\Delta_d(u_{1:d+1})}{\nabla_\ell(u_{1:\ell})} \mathbf{1}_{\mathsf{P}}(u_{1:d+1})\, \lambda_{d-\ell}(\mathrm{d}z) \sigma_{d-1}^{d+1-\ell}(\mathrm{d}u_{\ell+1:d+1})\, \mathrm{d}r \, \sigma_{0}^\ell(\mathrm{d}u_{1:\ell})  \, \mu_{d-1}^{\ell}(\mathrm{d}H_{1:\ell}).
	\end{align*}
\end{lemma}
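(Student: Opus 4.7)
The plan is to derive the formula from the classical affine Blaschke--Petkantschin formula for the inball of a simplex formed by $d+1$ hyperplanes (cf.~\cite[Theorem 7.3.2]{schneider2008stochastic}), and then to perform a second change of variables that turns the first $\ell$ inball-normal directions back into genuine elements of $\H$ integrated against $\mu_{d-1}$. With the abbreviation $\bm{\tau}=(z,r)$ the starting identity is
\begin{equation*}
\int_{\H^{d+1}} f\,\mathrm{d}\mu_{d-1}^{d+1} = 2^{d+1} d! \int_{(S^{d-1})^{d+1}} \int_{\R^d} \int_0^\infty f\bigl(H(u_{1:d+1},\bm{\tau})\bigr)\,\Delta_d(u_{1:d+1})\,\mathbf{1}_{\mathsf{P}}(u_{1:d+1})\, \mathrm{d}r\,\lambda_d(\mathrm{d}z)\,\sigma_{d-1}^{d+1}(\mathrm{d}u_{1:d+1}),
\end{equation*}
where each $u_i$ is the unit vector pointing from the inball centre $z$ toward $H_i$, and $H_i=H(u_i,\langle z,u_i\rangle+r)$.

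The key step is to rewrite the inner $\lambda_d$-integral. For $\sigma_{d-1}^\ell$-a.e.\ tuple the vectors $u_1,\dots,u_\ell$ are linearly independent. Fix such a tuple and $r>0$, and split $z=z_\parallel+z_\perp$ with $z_\parallel\in\mathrm{span}(u_{1:\ell})$ and $z_\perp\in\mathrm{span}(u_{1:\ell})^\perp$, replacing $z_\parallel$ by the $\ell$ coordinates $t_i:=\langle z,u_i\rangle+r$, $i=1,\dots,\ell$. The linear map $z_\parallel\mapsto(t_1,\dots,t_\ell)$ has absolute Jacobian $\det(U^T U)^{1/2}=\nabla_\ell(u_{1:\ell})$, where $U$ is the $d\times\ell$ matrix with columns $u_1,\dots,u_\ell$, so
\begin{equation*}
\lambda_d(\mathrm{d}z)=\nabla_\ell(u_{1:\ell})^{-1}\,\mathrm{d}t_{1:\ell}\,\lambda_{d-\ell}(\mathrm{d}z_\perp).
\end{equation*}
Setting $H_i:=H(u_i,t_i)$ for $i\le\ell$, the relation $\langle z,u_i\rangle=t_i-r$ is equivalent to $z\in H_i-ru_i$; hence, as $z_\perp$ ranges over $\mathrm{span}(u_{1:\ell})^\perp$, the full $z$ sweeps out the affine $(d-\ell)$-flat $\bigcap_{i=1}^\ell(H_i-ru_i)$, whose induced $(d-\ell)$-dimensional Lebesgue measure coincides with $\lambda_{d-\ell}(\mathrm{d}z_\perp)$ by translation invariance.

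The final step is to trade, for each $i\le\ell$, the measure $\sigma_{d-1}(\mathrm{d}u_i)\,\mathrm{d}t_i$ on $S^{d-1}\times\R$ for $\sigma_0(\mathrm{d}u_i)\,\mu_{d-1}(\mathrm{d}H_i)$ on $\{(u,H):u\in S_{H^\perp}\}$. Using $H(u,t)=H(-u,-t)$ together with \eqref{repmu}, a direct computation (splitting $\R$ at $0$ and substituting $u\to-u$, $t\to-t$ on the negative part) yields
\begin{equation*}
\int_\H\int_{S_{H^\perp}} g(u,H)\,\sigma_0(\mathrm{d}u)\,\mu_{d-1}(\mathrm{d}H)=\int_{S^{d-1}}\int_{\R} g\bigl(u,H(u,t)\bigr)\,\mathrm{d}t\,\sigma_{d-1}(\mathrm{d}u)
\end{equation*}
for every nonnegative measurable $g$, where the uniform $\tfrac12$-weights of $\sigma_0$ exactly compensate the factor $2$ in~\eqref{repmu}. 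Applying this identity once per index $i=1,\dots,\ell$ and rearranging by Fubini (everything is nonnegative) produces the right-hand side of the lemma, with the prefactor $2^{d+1}d!$ preserved.

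\textbf{Main obstacle.} The only subtle point is the orientation bookkeeping: in Theorem 7.3.2 the vector $u_i$ is an oriented inball normal pointing from $z$ toward $H_i$, whereas $\mu_{d-1}$ records each hyperplane unoriented. The discrete measure $\sigma_0$ on $S_{H_i^\perp}$ is tailored to restore the missing binary choice in a mass-preserving way; one must check carefully that its $\tfrac12$-weights exactly absorb the factor $2$ from~\eqref{repmu} so that no spurious $2^\ell$ appears when recombining the $d+1-\ell$ inball normals with the $\ell$ unoriented hyperplanes. Once this accounting is carried out, the remaining ingredients---the Gram-determinant Jacobian and Fubini---are routine.
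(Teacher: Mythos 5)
Your proof is correct, and it takes a genuinely different and more elementary route than the paper's. Both proofs start from the affine simplex Blaschke--Petkantschin formula \cite[Theorem~7.3.2]{schneider2008stochastic}, but after that they diverge. The paper passes the first $\ell$ normal directions to their orthogonal-complement Grassmannians, then chains together the linear Blaschke--Petkantschin formula \cite[Theorem~7.2.5]{schneider2008stochastic}, the relation (13.9) between linear and affine Grassmannians, and \cite[Theorem~7.2.8, Lemma~14.1.1]{schneider2008stochastic} to recombine everything; the powers of $\nabla_\ell$ and the constants $b_{kq}$ only cancel at the very end. You instead keep everything in coordinates: split the inball centre $z$ orthogonally along $L=\mathrm{span}(u_{1:\ell})$ and $L^\perp$, observe that $z_\parallel \mapsto (t_1,\dots,t_\ell)$ with $t_i=\langle z,u_i\rangle+r$ has absolute Jacobian $\nabla_\ell(u_{1:\ell})$ (which is where the $1/\nabla_\ell$ in the integrand comes from), identify the residual $z_\perp$-integration with $\lambda_{d-\ell}$ on the flat $\bigcap_i(H_i-ru_i)$, and then trade each pair $(u_i,t_i)\in S^{d-1}\times\R$ for $(u_i,H_i)$ with $u_i\in S_{H_i^\perp}$ via the identity $\int_{\H}\int_{S_{H^\perp}}g\,\sigma_0\,\mathrm{d}\mu_{d-1}=\int_{S^{d-1}}\int_{\R}g(u,H(u,t))\,\mathrm{d}t\,\sigma_{d-1}(\mathrm{d}u)$, in which the factor $2$ in the representation \eqref{repmu} exactly absorbs the $\tfrac12$-weights of $\sigma_0$. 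Your orientation bookkeeping is right, no spurious $2^\ell$ appears, and the a.e.\ linear independence of $u_{1:\ell}$ (which you invoke) holds since the degenerate set is $\sigma_{d-1}^\ell$-null. The trade-off is that your route is shorter and self-contained, whereas the paper's route stays within the standard Schneider--Weil integral-geometric toolkit and makes the structural analogy with other Blaschke--Petkantschin results visible; but as a verification of this particular identity your argument is complete and, if anything, easier to audit.
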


\begin{proof}
	We use \cite[Theorem 7.3.2]{schneider2008stochastic} (note the missing factor $2^{d+1}$ on the right-hand side of the statement there) and obtain
	\begin{align*}
		&\int_{\H^{d+1}} f \,\mathrm{d}\mu_{d-1}^{d+1}=  2^{d+1}d! \int_{\R^d} \int_0^\infty \int_{\mathsf{P}} f(H(\bm{u},\bm{\tau})) \Delta_d(\bm{u}) \sigma_{d-1}^{d+1}(\mathrm{d}\bm{u})\, \mathrm{d}r\,\mathrm{d}z. \end{align*}
	We now replace the integration over the inner $\ell$ unit vectors $u_1,\dots,u_\ell$ by an integration over their orthogonal complements and obtain for the above
	\begin{align}
		& 2^{d+1}d! \int_{G(d,d-1)^{\ell}} \int_{\bigtimes\limits_{i=1}^\ell S_{G_i^\perp}} \int_{\R^d} \int_0^\infty \int_{S^{d+1-\ell}} f(z+G_{1:\ell}+ru_{1:\ell},H(u_{\ell+1:d+1},\bm{\tau}))\nonumber\\
		&\qquad \times \, \Delta_d(u_{1:d+1}) \mathbf{1}_\mathsf{P}(u_{1:d+1}) \sigma_{d-1}^{d+1-\ell}(\mathrm{d}u_{\ell+1:d+1})\, \mathrm{d}r\,\mathrm{d}z\, \sigma_{0}^\ell(\mathrm{d}u_{1:\ell})  \nu_{d-1}^{\ell}(\mathrm{d}G_{1:\ell}), \label{bpsph}
	\end{align}
	where $z+G_{1:\ell}+ru_{1:\ell}:=(z+G_1+ru_1,\dots,z+G_{\ell}+ru_{\ell})$ and $\nu_q$ ($q \in \{0,\dots,d\}$) is the unique Haar measure on the Grassmannian $G(d,q)$ of $q$-dimensional linear subspaces, normalized by $\nu_q(G(d,q))=1$ (see \cite[Theorem 13.2.11]{schneider2008stochastic}).
	For $k \in \N$ and $q \in [k]$ we write $b_{kq}:=\frac{\omega_{k-q+1}\cdots \omega_k}{\omega_1\cdots \omega_q}$ and $\overline{b}:=b_{d\ell}\left(\frac{b_{\ell1}}{b_{d1}}\right)^{\ell}$. Moreover, for $L \in G(d,q)$ we denote by $G(L,d-1)$ the space of all $(d-1)$-dimensional linear subspaces containing $L$ with invariant measure $\nu_{d-1}^L$ (see \cite[Section 13.2]{schneider2008stochastic}). Similarly, let $A(d,q)$ be the affine Grassmannian of $q$-dimensional affine subspaces of $\R^d$ equipped with unique Haar measure $\mu_q$, normalized by $\mu_q(\{E \in A(d,q):\,E \cap B^d \neq \emptyset\})=\kappa_{d-q}$ (see \cite[Theorem 13.2.12]{schneider2008stochastic}). Note that $A(d,d-1)=\mathbb{H}$. For an affine subspace $E \in A(d,q)$ we write $A(E,d-1)$ for the space of affine subspaces containing $E$ with invariant measure $\mu_{d-1}^{E}$. Using Theorem \cite[Theorem 7.2.5]{schneider2008stochastic} we find that \eqref{bpsph} is given by
	\begin{align*}
		&2^{d+1}d! \overline{b}\int_{G(d,d-\ell)} \int_{G(L,d-1)^{\ell}} \int_{\bigtimes\limits_{i=1}^\ell S_{G_i^\perp}} \int_{\R^d} \int_0^\infty \int_{S^{d+1-\ell}} f(z+G_{1:\ell}+ru_{1:\ell},H(u_{\ell+1:d+1},\bm{\tau})) \nonumber\\
		&\quad \times \nabla_\ell(u_{1:\ell})^{d-\ell} \Delta_d(u_{1:d+1}) \mathbf{1}_\mathsf{P}(u_{1:d+1}) \sigma_{d-1}^{d+1-\ell}(\mathrm{d}u_{\ell+1:d+1}) \mathrm{d}r\mathrm{d}z\,\sigma_{0}^\ell(\mathrm{d}u_{1:\ell})\nonumber\\
		&\quad \times (\nu_{d-1}^L)^{\ell}(\mathrm{d}G_{1:\ell}) \nu_{d-\ell}(\mathrm{d}L). 
	\end{align*}
	Now we use that $\R^d=L \oplus L^\perp$, write $\tau_i:=\langle z+z',u_i\rangle +r$ with $z \in L$ and $z' \in L^\perp$ for $i=\ell+1,\dots,d+1$ and find that the above is given by
	\begin{align*}
		&2^{d+1}d! \overline{b}\int_{G(d,d-\ell)} \int_{G(L,d-1)^{\ell}} \int_{\bigtimes\limits\limits_{i=1}^\ell S_{G_i^\perp}} \int_{L^\perp} \int_{L} \int_0^\infty \int_{S^{d+1-\ell}} f(z+G_{1:\ell}+ru_{1:\ell},H(u_{\ell+1:d+1},\bm{\tau})) \nonumber\\
		&\quad \times \nabla_\ell(u_{1:\ell})^{d-\ell} \Delta_d(u_{1:d+1}) \mathbf{1}_\mathsf{P}(u_{1:d+1}) \sigma_{d-1}^{d+1-\ell}(\mathrm{d}u_{\ell+1:d+1})\, \mathrm{d}r \,\lambda_{d-\ell}(\mathrm{d}z)\,\lambda_{\ell}(\mathrm{d}z') \sigma_{0}^\ell(\mathrm{d}u_{1:\ell})\nonumber\\ &\quad \times(\nu_{d-1}^L)^{\ell}(\mathrm{d} G_{1:\ell})		 \nu_{d-\ell}(\mathrm{d}L).
	\end{align*}
	From \cite[(13.9)]{schneider2008stochastic} we obtain for the above
	\begin{align*}
		& 2^{d+1}d! \overline{b}\int_{A(d,d-\ell)} \int_{A(E,d-1)^{\ell}} \int_{\bigtimes\limits_{i=1}^\ell S_{G_i^\perp}} \int_{E} \int_0^\infty \int_{S^{d+1-\ell}} f(H_{1:\ell}+ru_{1:\ell}) \nabla_\ell(u_{1:\ell})^{d-\ell} \Delta_d(u_{1:d+1})\nonumber\\
		&\quad \times \mathbf{1}_\mathsf{P}(u_{1:d+1}) \sigma_{d-1}^{d+1-\ell}(\mathrm{d}u_{\ell+1:d+1}) \mathrm{d}r \lambda_{d-\ell}(\mathrm{d}z)\sigma_{0}^\ell(\mathrm{d}u_{1:\ell}) (\mu_{d-1}^E)^{\ell}(\mathrm{d}H_{1:\ell}) \mu_{d-\ell}(\mathrm{d}E),
	\end{align*}
	where $H_{1:\ell}+ru_{1:\ell}:=(H_1+ru_1,\dots,H_{\ell}+ru_{\ell},H(u_{\ell+1:d+1},\bm{\tau}))$. Using \cite[Theorem 7.2.8]{schneider2008stochastic} and \cite[Lemma 14.1.1]{schneider2008stochastic}, the last term is given by
	\begin{align*}
		& 2^{d+1}d!\int_{\mathbb{H}^{\ell}} \int_{\bigtimes\limits_{i=1}^\ell S_{G_i^\perp}} \int_0^\infty \int_{S^{d+1-\ell}} \int_{\bigcap_{i=1}^\ell (H_i-ru_i)}   f(H_1,\dots,H_{\ell},H(u_{\ell+1:d+1},\bm{\tau}))\nonumber\\
		&\times \frac{\Delta_d(u_{1:d+1})}{\nabla_\ell(u_{1:\ell})} \mathbf{1}_\mathsf{P}(u_{1:d+1})\, \lambda_{d-\ell}(\mathrm{d}z) \sigma_{d-1}^{d+1-\ell}(\mathrm{d}u_{\ell+1:d+1})\, \mathrm{d}r \,\sigma_{0}^\ell(\mathrm{d}u_{1:\ell})  \, \mu_{d-1}^{\ell}(\mathrm{d}(H_1,\dots,H_{\ell})).
	\end{align*}
\end{proof}

\section{Poisson process approximation}
In this section we give an extension of \cite[Theorem 6.1]{BSY21} and use its notation. We begin with a brief repetition of the setup. Let $(\mathbb{X},\mathcal{X})$ and $(\mathbb{Y},\mathcal{Y})$ be locally compact second countable Hausdorff (lcscH) spaces. Let $g:\mathbb{X}^k \times \mathbf{N}_{\mathbb{X}} \to \{0,1\}$, $f:\mathbb{X}^k \times \mathbf{N}_{\mathbb{X}} \to \mathbb{Y}$ be measurable functions that are symmetric in the $\mathbf{x}$ coordinates and let $\mathcal{F}$ be the space of closed subsets of $\mathbb{X}$ equipped with the Fell topology. We assume that $\mathcal{S}:\mathbb{X}^k \times \mathbf{N}_{\mathbb{X}} \to \F$ is measurable and that $f,g$ are localized to $\mathcal{S}$, i.e., for all $\omega \in \mathbf{N}_{\mathbb{X}}$ and for all $S \supset \mathcal{S}(\mathbf{x},\omega)$ we have that
\begin{align}
	&g(\mathbf{x},\omega)=g(\mathbf{x},\omega \cap S),\nonumber\\
	&f(\mathbf{x},\omega)=f(\mathbf{x},\omega \cap S)\quad \text{if}\quad g(\mathbf{x},\omega)=1. \label{fgass}
\end{align}
Moreover, we assume for all $\mathbf{x}\in \mathbb{X}^k$ that $\mathcal{S}(\mathbf{x},\cdot):\mathbf{N}_{\mathbb{X}}\to \mathcal{F}$ is a stopping set. A mapping $\mathcal{S}:\mathbf{N}_{\mathbb{H}} \to \mathcal{F}$ is called {\em stopping set} if 
\begin{align}
	\{\omega \in \mathbf{N}_{\mathbb{H}}:\mathcal{S}(\omega)\subset S\}=\{\omega \in \mathbf{N}_{\mathbb{H}}:\mathcal{S}(\omega \cap S)\subset S\}\label{eqn:defstop}
\end{align}
for all compact $S \subset \mathbb{H}$.

Define 
\begin{align*}
	\xi[\omega]:=\frac{1}{k!} \sum_{\mathbf{x} \in \omega^{(k)}} g(\mathbf{x},\omega) \delta_{f(\mathbf{x},\omega)},\quad \omega \in \mathbf{N}_{\mathbb{X}},
\end{align*}
and let $\xi \equiv \xi [\eta]$, where $\eta$ is a Poisson process on $\mathbb{X}$ with $\sigma$-finite intensity measure $\mathbf{K}$. Then we obtain from the multivariate Mecke equation (see \cite[Theorem 4.4]{last2017lectures}) that the intensity measure $\mathbf{L}$ of $\xi$ is given by
\begin{align*}
	\mathbf{L}(\cdot)=\frac{1}{k!} \int_{\mathbb{X}^k} \mathbb{E} \mathbf{1}\{f(\mathbf{x},\eta+\delta_{\mathbf{x}})\in \cdot\} g(\mathbf{x},\eta+\delta_{\mathbf{x}})\,\mathbf{K}^k(\mathrm{d}\mathbf{x}).
\end{align*}

\begin{theorem} \label{PPA}
	Let $\xi$ be the process defined above with $f,g$ satisfying \eqref{fgass} and $\mathbf{L}(\mathbb{Y})< \infty$. Let $\zeta$ be a Poisson process with finite intensity measure $\mathbf{M}$. Further, suppose that we are given a measurable mapping $\mathbf{x} \mapsto S_{\mathbf{x}}$ from $\mathbb{X}^k$ to $\mathcal{F}$ satisfying $\mathbf{x} \subset S_{\mathbf{x}}$. For $\omega \in \mathbf{N}_{\mathbb{X}}$ let 
	\begin{align*}
		\tilde{g}(\mathbf{x},\omega):=g(\mathbf{x},\omega) \mathbf{1}\{\mathcal{S}(\mathbf{x},\omega) \subset S_{\mathbf{x}}\}.
	\end{align*}
	Then 
	\begin{align*}
		d_{\mathbf{KR}}(\xi,\zeta)\le d_{\text{TV}}(\mathbf{L},\mathbf{M})+E_1+E_2+E_3+E_4+E_5+E_6
	\end{align*}
	with 
	\begin{align*}
		&E_1= \frac{2}{k!} \int_{\mathbb{X}^k} \E g(\mathbf{x},\eta+\delta_{\mathbf{x}}) \mathbf{1}\{\mathcal{S}(\mathbf{x},\eta+\delta_{\mathbf{x}}) \not \subset S_{\mathbf{x}}\} \mathbf{K}^k (\mathrm{d}\mathbf{x}),\\
		&E_2=\frac{2}{(k!)^2} \int_{\mathbb{X}^k} \int_{\mathbb{X}^k} \E \mathbf{1}\{(\eta+\delta_{\mathbf{x}}+\delta_{\mathbf{z}}) \cap S_{\mathbf{x}} \cap S_{\mathbf{z}} \neq \emptyset\} \tilde{g}(\mathbf{x},\eta+\delta_{\mathbf{x}}) \E \tilde{g}(\mathbf{z},\eta+\delta_{\mathbf{z}})\\ 
		&\quad \quad \quad \quad \quad \times \,\mathbf{K}^k(\mathrm{d}\mathbf{z}) \mathbf{K}^k(\mathrm{d}\mathbf{x}),\\
		&E_3=\frac{2}{(k!)^2} \int_{\mathbb{X}^k} \int_{\mathbb{X}^k} \E \mathbf{1}\{(\eta+\delta_{\mathbf{x}}+\delta_{\mathbf{z}}) \cap S_{\mathbf{x}} \cap S_{\mathbf{z}} = \emptyset\} \tilde{g}(\mathbf{x},\eta+\delta_{\mathbf{x}})\\
		& \quad \quad \quad \times \,\E \mathbf{1}\{\eta \cap S_{\mathbf{x}} \cap S_{\mathbf{z}} \neq \emptyset\} \tilde{g}(\mathbf{z},\eta+\delta_{\mathbf{z}})  \mathbf{K}^k(\mathrm{d}\mathbf{z}) \mathbf{K}^k(\mathrm{d}\mathbf{x}),\\
		&E_4=\frac{2}{(k!)^2} \int_{\mathbb{X}^k} \int_{\mathbb{X}^k} \E \mathbf{1}\{(\eta+\delta_{\mathbf{x}}+\delta_{\mathbf{z}}) \cap S_{\mathbf{x}} \cap S_{\mathbf{z}} \neq \emptyset\} \tilde{g}(\mathbf{x},\eta+\delta_{\mathbf{x}}+\delta_{\mathbf{z}}) \tilde{g}(\mathbf{z},\eta+\delta_{\mathbf{x}}+\delta_{\mathbf{z}})\nonumber\\
		& \quad \quad \quad \times\mathbf{K}^k(\mathrm{d}\mathbf{z}) \mathbf{K}^k(\mathrm{d}\mathbf{x}),\\
		&E_5=\frac{2}{(k!)^2} \int_{\mathbb{X}^k} \int_{\mathbb{X}^k}  \E \mathbf{1}\{(\eta+\delta_{\mathbf{x}}+\delta_{\mathbf{z}}) \cap S_{\mathbf{x}} \cap S_{\mathbf{z}} = \emptyset\} \tilde{g}(\mathbf{x},\eta+\delta_{\mathbf{x}}+\delta_{\mathbf{z}}) \tilde{g}(\mathbf{z},\eta+\delta_{\mathbf{x}}+\delta_{\mathbf{z}})\\
		& \quad \quad \quad \times \,\E \mathbf{1}\{\eta\cap S_{\mathbf{x}} \cap S_{\mathbf{z}} \neq \emptyset\} \mathbf{K}^k(\mathrm{d}\mathbf{z}) \mathbf{K}^k(\mathrm{d}\mathbf{x}),\\
		&E_6=\frac{2}{k!} \sum_{\emptyset \subsetneq I \subsetneq \{1,\dots,k\}} \frac{1}{(k-|I|)!} \int_{\mathbb{X}^k} \int_{\mathbb{X}^{k-|I|}} \E  \tilde{g}(\mathbf{x},\eta+\delta_{\mathbf{x}}+\delta_{\mathbf{z}}) \tilde{g}(\mathbf{z},\eta+\delta_{\mathbf{x}}+\delta_{\mathbf{z}})\\
		& \quad \quad \quad \times \, \mathbf{K}^{k-|I|}(\mathrm{d}\mathbf{z}) \mathbf{K}^k(\mathrm{d}\mathbf{x}),
	\end{align*}
	where for $I=\{i_1,\dots,i_m\}$ we set $\mathbf{x}_I=(x_{i_1},\dots,x_{i_m})$ and $(\mathbf{x}_I,\mathbf{z})=(x_{i_1},\dots,x_{i_m},z_{1:k-m})$. 
\end{theorem}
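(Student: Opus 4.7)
The plan is to follow the Stein-method approach for Poisson process approximation in Kantorovich--Rubinstein distance, adapting the localization scheme of \cite[Theorem 6.1]{BSY21} to allow a random stopping set $\mathcal{S}(\mathbf{x},\cdot)$ contained in a reference set $S_{\mathbf{x}}$. By the triangle inequality
\[
\mathbf{d_{KR}}(\xi,\zeta)\;\le\;\mathbf{d_{KR}}(\xi,\mathrm{Poi}(\mathbf{L}))+\mathbf{d_{KR}}(\mathrm{Poi}(\mathbf{L}),\zeta)\;\le\;\mathbf{d_{KR}}(\xi,\mathrm{Poi}(\mathbf{L}))+d_{\text{TV}}(\mathbf{L},\mathbf{M}),
\]
so the whole problem reduces to bounding $\mathbf{d_{KR}}(\xi,\mathrm{Poi}(\mathbf{L}))$ by $E_1+\cdots+E_6$. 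The starting point is the general Stein bound for Poisson process approximation (see \cite{decreusefond2016functional} and the closely related statement in \cite{BSY21}): for any $h\in\text{Lip}(\mathbb{Y})$,
\[
\bigl|\mathbb{E} h(\xi)-\mathbb{E} h(\mathrm{Poi}(\mathbf{L}))\bigr|\;\le\;\Bigl|\mathbb{E}\!\int D_y h(\xi)\,\mathbf{L}(dy)-\mathbb{E}\!\int D_y h(\xi-\delta_y)\,\xi(dy)\Bigr|,
\]
where $D_y h(\omega):=h(\omega+\delta_y)-h(\omega)$ satisfies $|D_y h|\le 1$ for every $1$-Lipschitz $h$.

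Applying the multivariate Mecke formula to both integrals -- to the first via the explicit form of $\mathbf{L}$, to the second by writing $\xi$ as a sum of Dirac masses indexed by $\mathbf{x}\in\eta^{(k)}$ -- the right-hand side is rewritten as a single integral over $\mathbf{x}\in\mathbb{X}^k$ against $\mathbf{K}^k$. I then replace $g(\mathbf{x},\cdot)$ by the localized function $\tilde g(\mathbf{x},\cdot)=g(\mathbf{x},\cdot)\mathbf{1}\{\mathcal{S}(\mathbf{x},\cdot)\subset S_{\mathbf{x}}\}$; the crude bound $|D_y h|\le 1$ yields an error of at most $E_1$. On the event $\{\mathcal{S}(\mathbf{x},\eta+\delta_{\mathbf{x}})\subset S_{\mathbf{x}}\}$ the stopping-set property \eqref{eqn:defstop} combined with the localization assumption \eqref{fgass} forces both $\tilde g(\mathbf{x},\eta+\delta_{\mathbf{x}})$ and $f(\mathbf{x},\eta+\delta_{\mathbf{x}})$ to be measurable with respect to $(\eta\cap S_{\mathbf{x}})+\delta_{\mathbf{x}}$. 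This decoupling is the structural ingredient that enables an explicit coupling below.

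Concretely, I couple $\xi[\eta]$ with the Palm-type process $\xi^{\mathbf{x}}$ obtained by replacing $\eta\cap S_{\mathbf{x}}$ by an independent copy: a tuple $\mathbf{z}\in\eta^{(k)}$ distinct from $\mathbf{x}$ contributes differently to the two processes only if its own stopping set meets $S_{\mathbf{x}}$, which, by \eqref{eqn:defstop} again, forces $(\eta+\delta_{\mathbf{x}}+\delta_{\mathbf{z}})\cap S_{\mathbf{x}}\cap S_{\mathbf{z}}\ne\emptyset$. A second multivariate Mecke expansion applied to the resulting sum over such $\mathbf{z}$ now produces the remaining five error terms, split according to: (i) whether the intersection $S_{\mathbf{x}}\cap S_{\mathbf{z}}$ carries a point of the (possibly already conditioned) process, giving the dichotomy $E_2,E_4$ versus $E_3,E_5$; (ii) whether $\tilde g(\mathbf{x},\cdot)$ and $\tilde g(\mathbf{z},\cdot)$ are both evaluated at the uncoupled configuration $\eta+\delta_{\mathbf{x}}$ or at the coupled configuration $\eta+\delta_{\mathbf{x}}+\delta_{\mathbf{z}}$, giving $E_2,E_3$ versus $E_4,E_5$; and (iii) whether the tuples $\mathbf{x}$ and $\mathbf{z}$ share a nontrivial set of coordinates $I$, the case $\emptyset\subsetneq I\subsetneq\{1,\dots,k\}$ producing the sum in $E_6$. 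The main obstacle is the combinatorial bookkeeping: verifying that the factors $1/k!$ and $1/(k-|I|)!$ line up correctly after each Mecke expansion, that the indicator $\mathbf{1}\{\cdot\neq\emptyset\}$ vs.\ $\mathbf{1}\{\cdot=\emptyset\}$ splits at exactly the right place, and that the estimate $|D_y h|\le 1$ is invoked only on events where no better bound is available.
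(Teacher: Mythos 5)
Your high-level strategy matches the paper's: start from the Stein machinery for Poisson process approximation in the Kantorovich--Rubinstein metric, introduce the localized indicator $\tilde g$ (extracting $E_1$), couple $\eta$ with a version in which $\eta\cap S_{\mathbf{x}}$ is resampled, and expand the resulting discrepancy with the multivariate Mecke equation, splitting by whether $S_{\mathbf{x}}\cap S_{\mathbf{z}}$ contains a point, by which configuration $\tilde g$ is evaluated at, and by whether $\mathbf{x}$ and $\mathbf{z}$ share coordinates. That architecture is correct and is exactly what the paper does.

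However, the central analytic step is misstated, and this is not a cosmetic issue. The inequality you take as your ``starting point,''
$\bigl|\mathbb{E} h(\xi)-\mathbb{E} h(\mathrm{Poi}(\mathbf{L}))\bigr|\le\bigl|\mathbb{E}\!\int D_y h(\xi)\,\mathbf{L}(dy)-\mathbb{E}\!\int D_y h(\xi-\delta_y)\,\xi(dy)\bigr|$,
is false: the Stein identity is an \emph{equality} with the Stein solution $F_h$ (equivalently $\int_0^\infty P_s h\, ds$) in place of $h$, not an inequality for the test function itself. The bound $|D_y h|\le 1$, which you cite as the only estimate invoked, is a first-order Lipschitz fact about $h$ and by itself cannot close the argument. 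What the proof actually needs is control on how $D_y F_h$ changes when the configuration is perturbed, i.e., a second-order Stein factor bound; in the paper and in \cite{BSY21} this is delivered through the semigroup $P_s$ and the estimate that perturbing the argument of $D_y P_s h$ by one point costs at most $2e^{-s}$, which after integrating over $s\in(0,\infty)$ produces exactly the factors of $2$ in $E_1,\dots,E_6$. This appears nowhere in your outline: you have dropped the time integral and the exponential decay entirely, so the coupling step has no mechanism to convert the discrepancy between $\xi[\eta]$ and the resampled $\xi_{\mathbf{x}}$-process into a bound on the Lipschitz test functional. The conclusion you aim for is correct and the error terms are identified correctly, but as written the proof does not go through; you need to restore the semigroup representation (the paper uses the identity at \cite[(4.5)]{BSY21} and the bound \cite[(2.9)]{BSY21}) before the bookkeeping you describe can be carried out.
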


\begin{remark} \rm \label{rem:Poi}
	(a) By interchanging the roles of $\mathbf{x}$ and $\mathbf{z}$ in the term $E_3$ in Theorem 8, one sees that $E_3 \le E_2$.\\
	(b) Note that
	\begin{align*}
		E_2+E_3&\le \frac{2}{(k!)^2} \int_{\mathbb{X}^k} \int_{\mathbb{X}^k} \mathbf{1}\{ S_{\mathbf{x}} \cap S_{\mathbf{z}} \neq \emptyset\}\E  \tilde{g}(\mathbf{x},\eta+\delta_{\mathbf{x}}) \E \tilde{g}(\mathbf{z},\eta+\delta_{\mathbf{z}}) \mathbf{K}^k(\mathrm{d}\mathbf{z}) \mathbf{K}^k(\mathrm{d}\mathbf{x})=:E_2',\\
		E_4+E_5&\le \frac{2}{(k!)^2} \int_{\mathbb{X}^k} \int_{\mathbb{X}^k} \mathbf{1}\{ S_{\mathbf{x}} \cap S_{\mathbf{z}} \neq \emptyset\}\E  \tilde{g}(\mathbf{x},\eta+\delta_{\mathbf{x}}+\delta_{\mathbf{z}}) \tilde{g}(\mathbf{z},\eta+\delta_{\mathbf{x}}+\delta_{\mathbf{z}}) \mathbf{K}^k(\mathrm{d}\mathbf{z}) \mathbf{K}^k(\mathrm{d}\mathbf{x})\\
		&=:E_3'.
	\end{align*}	
	This shows that Theorem \ref{PPA} is a refinement of \cite[Theorem 4.1]{BSY21}, where $d_{\mathbf{KR}}(\xi,\zeta)$ is bounded by $d_{TV}(\mathbf{L},\mathbf{M})+E_1+E_2'+E_3'+E_6$.
\end{remark}

\begin{proof}[Proof of Theorem \ref{PPA}]
	We proceed along the same lines as in the proof of \cite[Theorem 4.1]{BSY21}. First assume that for all $\mathbf{x} \in \mathbb{X}^k$ and $\omega \in \mathbf{N}_{\mathbb{X}}$ we have that $\mathcal{S}(\mathbf{x},\omega) \subset S_{\mathbf{x}}$. Then we have that $\tilde{g}=g$, $E_1=0$ and
	\begin{align*}
		g(\mathbf{x},\omega)=g(\mathbf{x},\omega \cap S_{\mathbf{x}}),\quad f(\mathbf{x},\omega)=f(\mathbf{x},\omega \cap S_{\mathbf{x}})\quad \text{if}\quad g(\mathbf{x},\omega)=1.
	\end{align*}
	Let $\eta'$ be a Poisson process on $\mathbb{X}$ that is independent of $\eta$ and has intensity measure $\mathbf{K}$. For fixed $h \in \text{Lip}(\mathbb{Y})$ we need to bound the difference $\E h(\zeta)-\E h(\xi)$ which is by \cite[(4.5)]{BSY21} given by
	\begin{align}
		\E h(\zeta)-\E h(\xi)=\frac{1}{k!} &\int_0^\infty \int_{\mathbb{X}^k} \left( \E g(\mathbf{x},\eta'+\delta_{\mathbf{x}}) D_{f(\mathbf{x},\eta'+\delta_{\mathbf{x}})} [P_sh(\xi[\eta])] \right.\nonumber\\
		&-\left. \E g(\mathbf{x},\eta+\delta_{\mathbf{x}}) D_{f(\mathbf{x},\eta+\delta_{\mathbf{x}})} [P_sh(\xi[\eta+\delta_{\mathbf{x}}]-\delta_{f(\mathbf{x},\eta+\delta_{\mathbf{x}})})] \right) \mathbf{K}^k(\mathrm{d}\mathbf{x})\mathrm{d}s,\label{4.5an}
	\end{align}
	where $D_xh(\omega):=h(\omega+\delta_x)-h(\omega)$ and $P_s$ is the Markov semigroup corresponding to the generator $\mathcal{L}$ that is given by
	\begin{align*}
		\mathcal{L}h(\omega):=\int_{\mathbb{X}} D_xh(\omega)\mathbf{M}(\mathrm{d}x)-\int_{\mathbb{X}} D_xh(\omega-\delta_x) \omega(\mathrm{d}x).
	\end{align*}
	
	For $\mathbf{x} \in \mathbb{X}^k$ and $\omega \in \mathbf{N}_{\mathbb{X}}$ we define
	\begin{align*}
		\xi_{\mathbf{x}}[\omega]:&=\frac{1}{k!} \sum_{\mathbf{z}\in \omega^{(k)} } \mathbf{1}\{(\omega+\delta_{\mathbf{x}}) \cap S_{\mathbf{z}} \cap S_{\mathbf{x}}=\emptyset\} g(\mathbf{z},\omega) \delta_{f(\mathbf{z},\omega)}\\
		&=\frac{1}{k!} \sum_{\mathbf{z}\in \omega^{(k)} } \mathbf{1}\{(\omega+\delta_{\mathbf{x}}) \cap S_{\mathbf{z}} \cap S_{\mathbf{x}}=\emptyset\} g(\mathbf{z},\omega \cap S_{\mathbf{z}}) \delta_{f(\mathbf{z},\omega \cap S_{\mathbf{z}})}\\
		&=\frac{1}{k!} \sum_{\mathbf{z}\in \omega^{(k)} } \mathbf{1}\{(\omega+\delta_{\mathbf{x}}) \cap S_{\mathbf{z}} \cap S_{\mathbf{x}}=\emptyset\} g(\mathbf{z},\omega \cap S_{\mathbf{z}} \cap S_{\mathbf{x}}+\omega \cap S_{\mathbf{z}}\cap S_{\mathbf{x}}^c)\\ 
		&\qquad \qquad \qquad \times \delta_{f(\mathbf{z},\omega \cap S_{\mathbf{z}}\cap S_{\mathbf{x}}+\omega \cap S_{\mathbf{z}}\cap S_{\mathbf{x}}^c)}\\
		&=\frac{1}{k!} \sum_{\mathbf{z}\in \omega^{(k)} } \mathbf{1}\{(\omega+\delta_{\mathbf{x}}) \cap S_{\mathbf{z}} \cap S_{\mathbf{x}}=\emptyset\} g(\mathbf{z},\omega \cap S_{\mathbf{x}}^c) \delta_{f(\mathbf{z},\omega \cap S_{\mathbf{x}}^c)}.
	\end{align*}
	It follows from \cite[(2.9)]{BSY21} and the multivariate Mecke equation (see \cite[Theorem 4.4]{last2017lectures} that
	\begin{align}
		&|\E g(\mathbf{x},\eta'+\delta_{\mathbf{x}})D_{f(\mathbf{x},\eta'+\delta_{\mathbf{x}})} [P_sh(\xi[\eta])] - \E g(\mathbf{x},\eta'+\delta_{\mathbf{x}})D_{f(\mathbf{x},\eta'+\delta_{\mathbf{x}})} [P_sh(\xi_{\mathbf{x}}[\eta])] |\nonumber\\
		&\quad \le \frac{2}{k!} e^{-s} \E g(\mathbf{x},\eta'+ \delta_{\mathbf{x}}) \sum_{\mathbf{z} \in \eta^{(k)}} \mathbf{1}\{(\eta+\delta_{\mathbf{x}}) \cap S_{\mathbf{z}} \cap S_{\mathbf{x}} \neq \emptyset\} g(\mathbf{z},\eta)\nonumber\\
		&\quad \le \frac{2}{k!} e^{-s}  \int_{\mathbb{X}^k} \E g(\mathbf{x},\eta+\delta_{\mathbf{x}})\E \mathbf{1}\{(\eta+\delta_{\mathbf{x}}+\delta_{\mathbf{z}}) \cap S_{\mathbf{z}} \cap S_{\mathbf{x}} \neq \emptyset\}   g(\mathbf{z},\eta+\delta_{\mathbf{z}}) \mathbf{K}^k(\mathrm{d} \mathbf{z}).\label{Ps1}
	\end{align}
	By assumption \eqref{fgass}, $f(\mathbf{x},\eta+\delta_{\mathbf{x}})$ and $g(\mathbf{x},\eta+\delta_{\mathbf{x}})$ depend only on $\eta \cap S_{\mathbf{x}}$. Since $\eta \stackrel{d}{=} \eta'$, we obtain from the independence property of the Poisson process that
	\begin{align}
		\E g(\mathbf{x},\eta'+\delta_{\mathbf{x}}) D_{f(\mathbf{x},\eta'+\delta_{\mathbf{x}})} [P_sh(\xi_{\mathbf{x}}[\eta])] = \E g(\mathbf{x},\eta+\delta_{\mathbf{x}})D_{f(\mathbf{x},\eta+\delta_{\mathbf{x}})} [P_sh(\xi_{\mathbf{x}}[\eta \cap S_{\mathbf{x}}^c+\eta'\cap S_{\mathbf{x}}])]. \label{e=e}
	\end{align}
	Note that
	\begin{align*}
		\xi_{\mathbf{x}}[\eta \cap S_{\mathbf{x}}^c+\eta'\cap S_{\mathbf{x}}]=\frac{1}{k!} \sum_{\mathbf{z} \in (\eta \cap S_{\mathbf{x}}^c)^{(k)}} \mathbf{1}\{(\eta'+\delta_{\mathbf{x}}) \cap S_{\mathbf{z}}\cap S_{\mathbf{x}}=\emptyset\} g(\mathbf{z},\eta \cap S_{\mathbf{x}}^c) \delta_{f(\mathbf{z},\eta \cap S_{\mathbf{x}}^c)}.
	\end{align*}
	Hence, we obtain that
	\begin{align}
		&\mathbf{d_{TV}}\Big(\frac{1}{k!} \sum_{\mathbf{z} \in \eta^{(k)}} g(\mathbf{z},\eta+\delta_{\mathbf{x}}) \delta_{f(\mathbf{z},\eta+\delta_{\mathbf{x}})},\xi_{\mathbf{x}}[\eta \cap S_{\mathbf{x}}^c+\eta'\cap S_{\mathbf{x}}] \Big)\nonumber\\
		&\quad \le \frac{1}{k!} \sum_{\mathbf{z} \in (\eta \cap S_{\mathbf{x}}^c)^{(k)}} \mathbf{1}\{(\eta'+\delta_{\mathbf{x}}) \cap S_{\mathbf{x}} \cap S_{\mathbf{z}} = \emptyset,\,\eta \cap S_{\mathbf{x}} \cap S_{\mathbf{z}} \neq \emptyset \} g(\mathbf{z},\eta \cap S_{\mathbf{x}}^c)\nonumber\\
		&\quad \quad +\frac{1}{k!} \sum_{\mathbf{z} \in \eta^{(k)}} \mathbf{1}\{(\eta+\delta_{\mathbf{x}}) \cap S_{\mathbf{x}} \cap S_{\mathbf{z}} = \emptyset,\,\eta' \cap S_{\mathbf{x}} \cap S_{\mathbf{z}} \neq \emptyset \} g(\mathbf{z},\eta + \delta_{\mathbf{x}})\nonumber\\
		&\quad \quad +\frac{1}{k!} \sum_{\mathbf{z} \in \eta^{(k)}} \mathbf{1}\{(\eta+\delta_{\mathbf{x}}) \cap S_{\mathbf{x}} \cap S_{\mathbf{z}} \neq \emptyset\} g(\mathbf{z},\eta+\delta_{\mathbf{x}}). \label{dtv}
	\end{align}
	As in \cite{BSY21}, we define the point process
	\begin{align*}
		\hat{\xi}_{\mathbf{x}} :=\xi[\eta+\delta_{\mathbf{x}}]-g(\mathbf{x},\eta+\delta_{\mathbf{x}})\delta_{f(\mathbf{x},\eta+\delta_{\mathbf{x}})} -\frac{1}{k!} \sum_{\mathbf{z}\in \eta^{(k)}} g(\mathbf{z},\eta+\delta_{\mathbf{x}}) \delta_{f(\mathbf{z},\eta+\delta_{\mathbf{x}})}.
	\end{align*}
	From \cite[(2.9)]{BSY21}, \eqref{dtv} and the Mecke equation we obtain that
	\begin{align*}
		&|\E g(\mathbf{x},\eta+\delta_{\mathbf{x}})D_{f(\mathbf{x},\eta+\delta_{\mathbf{x}})}[P_sh(\xi[\eta+\delta_{\mathbf{x}}]-\delta_{f(\mathbf{x},\eta+\delta_{\mathbf{x}})})]\nonumber\\
		&\quad \quad -\E g(\mathbf{x},\eta+\delta_{\mathbf{x}})D_{f(\mathbf{x},\eta+\delta_{\mathbf{x}})} [P_sh(\xi_{\mathbf{x}}[\eta \cap S_{\mathbf{x}}^c+\eta'\cap S_{\mathbf{x}}])]|\nonumber\\
		&\le 2e^{-s} \E g(\mathbf{x},\eta+\delta_{\mathbf{x}})\Big( \frac{1}{k!} \sum_{\mathbf{z}\in (\eta \cap S_{\mathbf{x}}^c)^{(k)}} \mathbf{1}\{(\eta'+\delta_{\mathbf{x}}) \cap S_{\mathbf{x}} \cap S_{\mathbf{z}}=\emptyset,\,\eta \cap S_{\mathbf{x}} \cap S_{\mathbf{z}} \neq \emptyset \} g(\mathbf{z},\eta \cap S_{\mathbf{x}}^c)\nonumber\\
		&\quad \quad \quad \quad \quad \quad \quad \quad \quad+\frac{1}{k!} \sum_{\mathbf{z}\in \eta^{(k)}} \mathbf{1}\{(\eta+\delta_{\mathbf{x}}) \cap S_{\mathbf{x}} \cap S_{\mathbf{z}}= \emptyset,\,\eta' \cap S_{\mathbf{x}} \cap S_{\mathbf{z}} \neq \emptyset \} g(\mathbf{z},\eta+\delta_x)\nonumber\\
		&\quad \quad \quad \quad \quad \quad \quad \quad \quad+  \frac{1}{k!} \sum_{\mathbf{z}\in \eta^{(k)}} \mathbf{1}\{(\eta+\delta_{\mathbf{x}}) \cap S_{\mathbf{x}} \cap S_{\mathbf{z}}\neq \emptyset\} g(\mathbf{z},\eta+\delta_x) + \hat{\xi}_{\mathbf{x}}(\mathbb{Y}) \Big). 
	\end{align*}		
	Using that $\eta \cap S_{\mathbf{x}}$ and $\eta \cap S_{\mathbf{x}}^c$ are independent point processes, we obtain from the Mecke equation that the above is bounded by
	\begin{align}
		2 e^{-s} &\left(\frac{1}{k!} \int_{\mathbb{X}^k} \E \mathbf{1}\{\eta \cap S_{\mathbf{x}} \cap S_{\mathbf{z}} \neq \emptyset,\,(\eta'+\delta_{\mathbf{x}}+\delta_{\mathbf{z}}) \cap S_{\mathbf{x}} \cap S_{\mathbf{z}} = \emptyset \} g(\mathbf{x},\eta+\delta_{\mathbf{x}}) \right.\nonumber\\
		&\quad \quad \quad \times \,g(\mathbf{z},\eta'+\delta_{\mathbf{z}})\mathbf{K}^k(\mathrm{d}\mathbf{z})\nonumber\\
		&+ \frac{1}{k!} \int_{\mathbb{X}^k} \E  \mathbf{1}\{\eta' \cap S_{\mathbf{x}} \cap S_{\mathbf{z}} \neq \emptyset,\,(\eta+\delta_{\mathbf{x}}+\delta_{\mathbf{z}}) \cap S_{\mathbf{x}} \cap S_{\mathbf{z}} = \emptyset\} g(\mathbf{x},\eta+\delta_{\mathbf{x}}+\delta_{\mathbf{z}}) \nonumber\\
		&\quad \quad \quad \times \,g(\mathbf{z},\eta+\delta_{\mathbf{x}}+\delta_{\mathbf{z}})\mathbf{K}^k(\mathrm{d}\mathbf{z})\nonumber\\
		&+ \frac{1}{k!} \int_{\mathbb{X}^k} \E  \mathbf{1}\{(\eta +\delta_{\mathbf{x}}+\delta_{\mathbf{z}}) \cap S_{\mathbf{x}} \cap S_{\mathbf{z}} \neq \emptyset \} g(\mathbf{x},\eta+\delta_{\mathbf{x}}+\delta_{\mathbf{z}}) g(\mathbf{z},\eta+\delta_{\mathbf{x}}+\delta_{\mathbf{z}})\,\mathbf{K}^k(\mathrm{d}\mathbf{z})\nonumber\\
		&+ \left.\E g(\mathbf{x},\eta+\delta_{\mathbf{x}}) \hat{\xi}_{\mathbf{x}} (\mathbb{Y})\right). \label{Ps2}
	\end{align} 
	Now we substitute \eqref{Ps1} and \eqref{Ps2}  into \eqref{4.5an} and obtain with the triangle inequality
	\begin{align*}
		&|\E h(\zeta)- \E h(\xi) |\\
		&\, \le \frac{2}{(k!)^2} \int_{\mathbb{X}^k} \int_{\mathbb{X}^k} \E \mathbf{1}\{(\eta+\delta_{\mathbf{x}}+\delta_{\mathbf{z}}) \cap S_{\mathbf{x}} \cap S_{\mathbf{z}} \neq \emptyset\} \tilde{g}(\mathbf{x},\eta+\delta_{\mathbf{x}}) \E \tilde{g}(\mathbf{z},\eta+\delta_{\mathbf{z}}) \mathbf{K}^k(\mathrm{d}\mathbf{z}) \mathbf{K}^k(\mathrm{d}\mathbf{x})\\
		&\, + \frac{2}{(k!)^2} \int_{\mathbb{X}^k} \int_{\mathbb{X}^k} \E \mathbf{1}\{(\eta+\delta_{\mathbf{x}}+\delta_{\mathbf{z}}) \cap S_{\mathbf{x}} \cap S_{\mathbf{z}} = \emptyset\} \tilde{g}(\mathbf{x},\eta+\delta_{\mathbf{x}})\\
		&\quad \quad \quad \quad \quad \quad \times \,\E \mathbf{1}\{\eta \cap S_{\mathbf{x}} \cap S_{\mathbf{z}} \neq \emptyset\} \tilde{g}(\mathbf{z},\eta+\delta_{\mathbf{z}})  \mathbf{K}^k(\mathrm{d}\mathbf{z}) \mathbf{K}^k(\mathrm{d}\mathbf{x})\\
		&\, + \frac{2}{(k!)^2} \int_{\mathbb{X}^k} \int_{\mathbb{X}^k} \E \mathbf{1}\{(\eta+\delta_{\mathbf{x}}+\delta_{\mathbf{z}}) \cap S_{\mathbf{x}} \cap S_{\mathbf{z}} \neq \emptyset\} \tilde{g}(\mathbf{x},\eta+\delta_{\mathbf{x}}+\delta_{\mathbf{z}}) \tilde{g}(\mathbf{z},\eta+\delta_{\mathbf{x}}+\delta_{\mathbf{z}})\\ 
		&\quad \quad \quad \quad \quad \quad \times \,\mathbf{K}^k(\mathrm{d}\mathbf{z}) \mathbf{K}^k(\mathrm{d}\mathbf{x})\\
		&\, + \frac{2}{(k!)^2} \int_{\mathbb{X}^k} \int_{\mathbb{X}^k}  \E \mathbf{1}\{(\eta+\delta_{\mathbf{x}}+\delta_{\mathbf{z}}) \cap S_{\mathbf{x}} \cap S_{\mathbf{z}} = \emptyset\} \tilde{g}(\mathbf{x},\eta+\delta_{\mathbf{x}}+\delta_{\mathbf{z}}) \tilde{g}(\mathbf{z},\eta+\delta_{\mathbf{x}}+\delta_{\mathbf{z}})\\
		&\quad \quad \quad \quad \quad \quad \times \,\E \mathbf{1}\{\eta\cap S_{\mathbf{x}} \cap S_{\mathbf{z}} \neq \emptyset\} \mathbf{K}^k(\mathrm{d}\mathbf{z}) \mathbf{K}^k(\mathrm{d}\mathbf{x})\\
		&\, +\frac{2}{k!} \sum_{\emptyset \subsetneq I \subsetneq \{1,\dots,k\}} \frac{1}{(k-|I|)!} \int_{\mathbb{X}^k} \int_{\mathbb{X}^{k-|I|}} \E  \tilde{g}(\mathbf{x},\eta+\delta_{\mathbf{x}}+\delta_{\mathbf{z}}) \tilde{g}(\mathbf{z},\eta+\delta_{\mathbf{x}}+\delta_{\mathbf{z}})\\ 
		&\quad \quad \quad \quad \quad \quad \quad \quad \quad \quad \quad \quad\quad \quad \quad \quad \times \,\mathbf{K}^{k-|I|}(\mathrm{d}\mathbf{z}) \mathbf{K}^k(\mathrm{d}\mathbf{x}),
	\end{align*}
	where the terms on the right-hand side are $E_2$, $E_3, E_4, E_5$ and $E_6$. The rest of the proof goes along the lines of Step 2 in the proof of \cite[Theorem 4.1]{BSY21}.
\end{proof}

\section{Proof of Theorem \ref{thmr}}
To prepare the proof of Theorem \ref{thmr}, we determine  the intensity measure $\E \zeta_n$ of $\zeta_n$. Let $A \in \mathcal{B}^d$ and $y>0$.  From  \cite[Theorem 4.1.1]{schneider2008stochastic} we find that
\begin{align}
	\E \zeta_{n}(A \times (y,\infty))&=n \gamma^{(d)} \lambda_d(A ) \mathbb{Q}_0(\{K \in \mathcal{K}^d:\,2 \gamma r(K)>y+\log n\}), \label{palmmeaspre}
\end{align}
where $\mathbb{Q}_0$ is the distribution of the typical cell of a Poisson hyperplane distribution generated from a stationary, isotropic Poisson hyperplane process of intensity $\gamma$. From \cite[Theorem 10.4.6]{schneider2008stochastic} and using \eqref{defgammad} we find that for all $R>0$,
\begin{align}
	\mathbb{Q}_0(\{K \in \mathcal{K}^d:\, r(K)>R\})=\mathrm e^{-2\gamma R}. \label{est_Q0}
\end{align}
This gives for $n$ large enough
\begin{align}
	\E \zeta_{n}(A \times (y,\infty))&= \gamma^{(d)} \lambda_d(A ) \mathrm e^{-y}.\label{intmeas}
\end{align}

The following lemma gives upper bounds for the expected number of pairs of cells with centers in some compact set $W \subset \mathbb{R}^d$ in the following two scenarios. In the first one the inradius of both cells is larger than some $R>0$. In the second scenario the distance of their centers is larger than some $D>0$ and both inradii are in the interval $(aR,R]$ for some $a\in (0,1)$. 

\begin{lemma} \label{Le:le1}
	Let $\gamma>0$, $W \subset \mathbb{R}^d$ be compact and $I \subsetneq
	\{1,\dots,d+1\}$.
	\begin{enumerate}
		\item For all $R>0$ we have	
		\begin{align*}
			&\int_{\H^{d+1}} \int_{\H^{d+1-|I|}} \mathbf{1}\{z(\bm{H}) \in W\}\, \mathbf{1}\{z(\bm{H}_I,\bm{G}) \in W\}\,\mathbf{1}\{\max\{r(\bm{H}),r(\bm{H}_I,\bm{G})\}>R \} \nonumber\\
			&\qquad  \times\,\mathrm e^{-\gamma \mu_{d-1}(\H_{B(\bm{H})}\cup \H_{B(\bm{H}_I,\bm{G})})} \,\mu_{d-1}^{d+1-|I|}(\mathrm{d}\bm{G})\,\mu_{d-1}^{d+1}(\mathrm{d}\bm{H})\nonumber\\
			&\quad \le \begin{cases}
				c_0 \lambda_d(W)^2R \mathrm{e}^{-2\gamma R},&\quad I=\emptyset,\\
				c_1  \lambda_d(W) (\mathrm{diam}(W)+R)^{d-1} \mathrm{e}^{-2\gamma R},& \quad 1\le |I| \le d.
			\end{cases}
		\end{align*}
		\item Let $D>0$, $R>0$, $a\in (0,1)$ and let $L(a)=\frac{\omega_{d-1}}{\omega_d} \int_{\frac{-1}{1+a}}^{1} (1-x^2)^{\frac{d-3}{2}} \mathrm{d}x$ as in Remark \ref{lem2}. We have
		\begin{align}
			&\int_{\H^{d+1}} \int_{\H^{d+1-|I|}} \mathbf{1}\{z(\bm{H}) \in W\}\, \mathbf{1}\{r(\bm{H})+r(\bm{H}_I,\bm{G})\le\|z(\bm{H})-z(\bm{H}_I,\bm{G})\| \le D\}\  \nonumber\\
			&\qquad \times \mathrm{e}^{-\gamma \mu_{d-1}(\H_{B(\bm{H})}\cup \H_{B(\bm{H}_I,\bm{G})})} \mathbf{1}\{r(\bm{H}) \in (aR,R], r(\bm{H}_I,\bm{G}) \in (aR,R] \} \nonumber\\ 
			&\qquad \times \mu_{d-1}^{d+1-|I|}(\mathrm{d}\bm{G})\,\mu_{d-1}^{d+1}(\mathrm{d}\bm{H})\nonumber\\
			&\quad \le  \begin{cases}
				c_2 \frac{D^{d}\lambda_d(W)}{1-L(a)}\mathrm{e}^{-2\gamma aR(2-L(a))},&\quad I=\emptyset,\\
				c_3 \frac{(D+R)^{d-1}\lambda_d(W)}{1-L(a)}\mathrm{e}^{-2\gamma aR(2-L(a))},& \quad 1\le |I| \le d.\nonumber
			\end{cases}
		\end{align}
	\end{enumerate}
	Here,  $\bm{H}_I=(H_{i_1},\dots,H_{i_m})$ for $I=\{i_1,\dots,i_m\}$ and the constants $c_0,c_1,c_2,c_3>0$ depend on the dimension $d$ and on $\gamma$.
\end{lemma}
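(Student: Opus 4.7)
The proof rests on applying the spherical Blaschke--Petkantschin formulas in a suitable way to both the outer and the inner integration. For the outer integral over $\bm{H}$ I use the classical form (the first identity in the proof of Lemma \ref{bplem}), expressing $\mu_{d-1}^{d+1}(\mathrm{d}\bm{H})$ in terms of the inball centre $z_1=z(\bm{H})\in\R^d$, the inradius $r_1=r(\bm{H})>0$ and the tuple $\bm{u}_1$ of outer unit normals, with weight $\Delta_d(\bm{u}_1)\mathbf{1}_{\mathsf{P}}(\bm{u}_1)$. For the inner integration I distinguish the two cases: when $I=\emptyset$ the tuple $\bm{G}$ is integrated independently by the same identity, producing parameters $(z_2,r_2,\bm{u}_2)$; when $1\le|I|\le d$ I apply Lemma \ref{bplem} with $\ell=|I|$ to the full tuple $(\bm{H}_I,\bm{G})$, keeping the shared block $\bm{H}_I$ fixed. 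In the latter case the second inball centre $z_2$ is constrained to the $(d-|I|)$-dimensional affine subspace $\bigcap_{i\in I}(H_i-r_2 u_{2,i})$, and the sign choices $u_{2,i}\in\{\pm u_{1,i}\}$ of the shared normals contribute a finite $2^{|I|}$ factor. The angular integrals (involving $\Delta_d/\nabla_{|I|}$ and the constraint $\mathbf{1}_{\mathsf{P}}$) yield universal finite constants depending only on $d$.

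For part (1), the exponential factor is controlled via the identity $\mu_{d-1}(\H_{B(z,r)})=\Phi(B(z,r))=2r$, which by inclusion--exclusion gives $\mu_{d-1}(\H_{B_1}\cup\H_{B_2})\ge 2\max(r_1,r_2)$, so the exponential is dominated by $e^{-2\gamma\max(r_1,r_2)}$. For $I=\emptyset$ the two centre integrations decouple, each contributing $\lambda_d(W)$, while the joint radial integration over $\{\max(r_1,r_2)>R\}$ produces $2\int_R^\infty r\,e^{-2\gamma r}\,\mathrm{d}r=O(R\,e^{-2\gamma R})$, yielding the announced bound $c_0\lambda_d(W)^2 R\,e^{-2\gamma R}$. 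For $1\le|I|\le d$, the second centre $z_2$ varies over the intersection of a $(d-|I|)$-dimensional affine plane with $W$ (of Lebesgue measure at most $\mathrm{diam}(W)^{d-|I|}$); combining this with the $\lambda_d(W)$ factor from $z_1$ and the $R\,e^{-2\gamma R}$ from the radii, and using the crude bound $\mathrm{diam}(W)^{d-|I|}R\le c(\mathrm{diam}(W)+R)^{d-1}$ (valid for $d\ge 2$ after shifting a constant into $c$), yields $c_1\lambda_d(W)(\mathrm{diam}(W)+R)^{d-1}e^{-2\gamma R}$.

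Part (2) replaces the trivial exponential bound by the sharper one arising from the disjointness assumption and the two-sided radial control $aR<r_1,r_2\le R$. Since in this regime $a\min(r_1,r_2)\le\max(r_1,r_2)$, Remark \ref{lem2} gives $\mu_{d-1}(\H_{B_1}\cap\H_{B_2})\le 2L(a)\min(r_1,r_2)$, whence $\mu_{d-1}(\H_{B_1}\cup\H_{B_2})\ge 2(r_1+r_2)-2L(a)\min(r_1,r_2)$. Writing WLOG $r_1\le r_2$, the exponential becomes $e^{-2\gamma(r_1(1-L(a))+r_2)}$; integrating $r_1$ against $e^{-2\gamma r_1(1-L(a))}$ over $(aR,R]$ and $r_2$ against $e^{-2\gamma r_2}$ over the same interval produces the prefactor $(1-L(a))^{-1}e^{-2\gamma aR(2-L(a))}$, which is precisely the source of the $(1-L(a))^{-1}$ factor. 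The constraint $\|z_1-z_2\|\le D$ then localises $z_2$ to a ball of radius $D$ in the ambient space of dimension $d-|I|$, producing $D^d$ when $I=\emptyset$ and $D^{d-|I|}\le(D+R)^{d-1}$ otherwise. The main delicate point throughout is the careful bookkeeping of the $L(a)$ improvement (since the naive bound $e^{-2\gamma aR(2-L(a))}$ outside the integral would discard the $(1-L(a))^{-1}$ factor entirely); this is the one place where Remark \ref{lem2} must be applied before rather than after the radial integration.
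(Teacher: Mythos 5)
Your treatment of the case $I=\emptyset$ (both parts) and of the exponential bounds is essentially the paper's. The genuine divergence, and the gap, is in how you treat $2\le|I|\le d$.

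You apply Lemma \ref{bplem} uniformly with $\ell=|I|$, which puts the ratio $\Delta_d(u_{1:d+1})/\nabla_{|I|}(u_{1:|I|})$ into the angular integrand, and you assert that the resulting angular integrals ``yield universal finite constants.'' But this ratio is \emph{unbounded} for $|I|\ge 2$: when $u_1,\dots,u_{|I|}$ approach linear dependence, $\nabla_{|I|}\to 0$ while $\Delta_d$ need not vanish (degeneracy among $|I|$ of the vertices does not force the $d$-simplex to degenerate). Whether the combined angular integral — with $u_1,\dots,u_{|I|}$ tied to the normals of $\bm{H}_I$ and hence themselves integrated against $\sigma_{d-1}^{|I|}$ with the outer Blaschke--Petkantschin weight $\Delta_d$ — remains finite is a non-trivial question that you leave unaddressed. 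The paper never needs to answer it. For $|I|\ge 2$ it uses instead a purely metric argument: since $z(\bm{H}),z(\bm{H}_I,\bm{G})\in W$ and $r(\bm{H}_I,\bm{G})\le r(\bm{H})$ (respectively $\le R$), the triangle inequality forces every $G_i$, $i\notin I$, to lie in $\mathbb{H}_{B(z(\bm{H}),r(\bm{H})+\mathrm{diam}(W))}$ (resp.\ $\mathbb{H}_{B(z(\bm{H}),R+D)}$). The inner integral then factorizes over the $d+1-|I|$ free hyperplanes and is bounded crudely by $\mu_{d-1}(\mathbb{H}_{B(z,r)})^{d+1-|I|}=(2r)^{d+1-|I|}$, with no Blaschke--Petkantschin reparametrization at all. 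Lemma \ref{bplem} is used only for $|I|=1$, where $\nabla_1\equiv 1$ and the singularity disappears. If you want to keep your uniform approach, you must supply a proof that $\int_{\mathsf{P}}\Delta_d/\nabla_{|I|}\,\mathrm{d}\sigma$-type integrals are finite for $2\le|I|\le d$; as it stands this is a real gap.

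A smaller point in part (2): you justify $\mu_{d-1}(\H_{B_1}\cap\H_{B_2})\le 2L(a)\min(r_1,r_2)$ by ``$a\min(r_1,r_2)\le\max(r_1,r_2)$.'' That condition is vacuous for $a<1$ and does not trigger Remark~\ref{lem2}. What you actually need is $a\max(r_1,r_2)\le\min(r_1,r_2)$, which does hold in the regime $r_1,r_2\in(aR,R]$ (since $a\max\le aR<\min$), so the conclusion is correct, but the stated hypothesis is the wrong one. (The paper sidesteps this by applying Remark~\ref{lem2} asymmetrically: with $r(\bm{H})\le R$ and $aR<r(\bm{H}_I,\bm{G})$ it gets $ar(\bm{H})\le r(\bm{H}_I,\bm{G})$ directly, without picking a minimum.)
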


\begin{proof}
	(a) By symmetry we can assume (at the cost of a factor 2) that $r(\bm{H})\ge r(\bm{H}_I,\bm{G})$. Using the estimate $ \mu_{d-1}(\H_{B(\bm{H})}\cup \H_{B(\bm{H}_I,\bm{G})})\ge \mu_{d-1}(\H_{B(\bm{H})})=2r(\bm{H})$ gives that the left-hand side of the statement is bounded by
	\begin{align}
		&2\int_{\H^{d+1}} \int_{\H^{d+1-|I|}} \mathbf{1}\{z(\bm{H}) \in W\}\, \mathbf{1}\{z(\bm{H}_I,\bm{G}) \in W\}\,\mathbf{1}\{r(\bm{H})>R \} \mathbf 1 \{r(\bm{H}_I,\bm{G})\le r(\bm{H})\} \nonumber\\
		&\quad  \times\,\mathrm{e}^{-2\gamma r(\bm{H})} \,\mu_{d-1}^{d+1-|I|}(\mathrm{d}\bm{G})\,\mu_{d-1}^{d+1}(\mathrm{d}\bm{H}) \label{eqn:le1agen}
	\end{align}
	Now we consider the three cases $I=\emptyset$, $|I|=1$ and $2 \le |I| \le d$ separately. If $|I|=0$ we obtain from \cite[Theorem 7.3.2]{schneider2008stochastic} and \eqref{defgammad} that \eqref{eqn:le1agen} is given by
	\begin{align*}
		\frac{4((d+1)!)^2(\gamma^{(d)})^2}{\gamma^{2d+1}} \lambda_d(W)^2 \int_{R}^\infty r  \mathrm{e}^{-2\gamma r} \mathrm{d}r=	\frac{2((d+1)!)^2(\gamma^{(d)})^2(2\gamma R+1)}{\gamma^{2d+2}} \lambda_d(W)^2 \mathrm{e}^{-2\gamma R},
	\end{align*} 
	where we recall that $\gamma^{(d)}$ is the expected number of cells centered in a Borel set with Lebesgue measure one in the mosaic generated by a Poisson hyperplane process with intensity $\gamma$. 
	
	Now let $|I|=1$ and assume that $I=\{1\}$. Then we get from Lemma \ref{bplem} (note that $\nabla_1(u)=1$ for all $u \in S^{d-1}$) that \eqref{eqn:le1agen} is given by
	\begin{align}
		2^{d+2}d!&\int_{\mathbb{H}^{d+1}} \int_{S_{H_1^\perp}} \int_{0}^{r(\bm{H})} \int_{(S^{d-1})^{d}} \int_{(H_1-su_1) \cap W} \mathbf{1}\{z(\bm{H}) \in W\} \mathbf{1}\left\{r(\bm{H})>R\right\} \mathrm e^{-2\gamma r(\bm{H})} \nonumber\\
		&\quad  \times  \Delta_d(u_{1:d+1}) \mathbf{1}_\mathsf{P}(u_{1:d+1}) \lambda_{d-1} (\mathrm{d}w) \,\sigma_{d-1}^{d}(\mathrm{d}u_{2:d+1}) \,\mathrm{d}s \,\sigma_0 (\mathrm{d}u_1) \mu_{d-1}^{d+1} (\mathrm{d}\bm{H}).\label{le1s1}
	\end{align}
	Note that since the measure $\sigma_{d-1}$ is isotropic and $\Delta_d$ is rotation-invariant, we have for all $u\in S^{d-1}$
	\begin{align}
		\int_{(S^{d-1})^d} \Delta_d(u,u_{2:d+1}) \mathbf 1_{\mathsf {P}} (u,u_{2:d+1})  \,\sigma_{d-1}^d(\mathrm{d}u_{2:d+1})&=	\int_{\mathsf {P}} \Delta_d(u_{1:d+1})  \,\sigma_{d-1}^{d+1}(\mathrm{d}u_{1:d+1})\nonumber\\
		&=\frac{(d+1)1^{(d)}}{2^d},\label{eqn:Deltad}
	\end{align}
	where the second equality holds by \eqref{defgammad}. Since $\lambda((H_1-su_1) \cap W) \le \mathrm{diam}(W)^{d-1}$ for all $H_1 \in \mathbb{H}$, $s>0$ and $u_1 \in S^{d-1}$, we find that \eqref{le1s1} is bounded by
	\begin{align*}
		4 (d+1)! 1^{(d)}\mathrm{diam}(W)^{d-1} \int_{\mathbb{H}^{d+1}}    \mathbf{1}\{z(\bm{H}) \in W\} \mathbf{1}\left\{r(\bm{H})>R\right\} r(\bm{H}) \mathrm e^{-2\gamma r(\bm{H})}   \mu_{d-1}^{d+1} (\mathrm{d}\bm{H}),
	\end{align*}
	and the claim now follows after an application of \cite[Theorem 7.3.2]{schneider2008stochastic}. 
	
	If $2 \le |I|\le d$ we use that the triangle inequality gives 
	\begin{align}
		d(G_i,z(\bm{H}))\le r(\bm{H}_I,\bm{G})+\|z(\bm{H}_I,\bm{G})-z(\bm{H})\|,\quad i \in \{1,\dots,d+1\} \setminus I. \label{eqn:tridis}
	\end{align}
	This yields for \eqref{eqn:le1agen} the bound
	\begin{align*}
		&2\int_{\H^{d+1}} \mu_{d-1}(\mathbb{H}_{B(o,r(\bm{H})+\text{diam}(W))})^{d+1-|I|} \mathbf{1}\{z(\bm{H}) \in W\}\,\,\mathbf{1}\{r(\bm{H})>R \} \mathrm{e}^{-2\gamma r(\bm{H})} \,\mu_{d-1}^{d+1}(\mathrm{d}\bm{H}).
	\end{align*}
	Since $\mu_{d-1}(\mathbb{H}_{B(z,r)})=2r$ for all $z\in \mathbb{R}^d$ and all $r>0$, the above is given by
	\begin{align*}
		2^{d+2-|I|} \int_{\H^{d+1}} (r(\bm{H})+\text{diam}(W))^{d+1-|I|} \mathbf{1}\{z(\bm{H}) \in W\}\,\mathbf{1}\{r(\bm{H})>R \} \mathrm{e}^{-2\gamma r(\bm{H})} \,\mu_{d-1}^{d+1}(\mathrm{d}\bm{H}),
	\end{align*}
	and the claim again follows after an application of \cite[Theorem 7.3.2]{schneider2008stochastic}.
	
	(b)
	For $r(\bm{H}) \le R$ and $ aR< r(\bm{H}_I,\bm{G})$ we have $ar(\bm{H}) \le r(\bm{H}_I,\bm{G})$.  Hence, we obtain from Remark \ref{lem2} that $\mu_{d-1}(\H_{B(\bm{H})}\cap \H_{B(\bm{H}_I,\bm{G})}) \le 2L(a)r(\bm{H}_I,\bm{G})$. Consequently, by the inclsuion-exclusion principle,
	\begin{align*}
		\mu_{d-1}(\H_{B(\bm{H})}\cup \H_{B(\bm{H}_I,\bm{G})}) \ge 2r(\bm{H})+2(1-L(a))r(\bm{H}_I,\bm{G}).
	\end{align*}
	Therefore, the left-hand side in the statement of part (b) of the lemma is bounded by \begin{align}
		\int_{\H^{d+1}}& \int_{\H^{d+1-|I|}} \mathbf{1}\{z(\bm{H}) \in W\}\, \mathbf{1}\{\|z(\bm{H})-z(\bm{H}_I,\bm{G})\| \le D\}\ \mathrm{e}^{-2\gamma r(\bm{H})-2\gamma(1-L(a))r(\bm{H}_I,\bm{G})} \nonumber\\
		&\, \times\,  \mathbf{1}\left\{r(\bm{H}) \in (aR,R], r(\bm{H}_I,\bm{G}) \in (aR,R] \right\}  \,\mu_{d-1}^{d+1-|I|}(\mathrm{d}\bm{G})\,\mu_{d-1}^{d+1}(\mathrm{d}\bm{H}).\label{eqn:Leb}
	\end{align}
	
	If $I=\emptyset$ we apply \cite[Theorem 7.3.2]{schneider2008stochastic} twice and find that \eqref{eqn:Leb} is bounded by
	\begin{align*}
		\frac{\kappa_d((d+1)!)^2(\gamma^{(d)})^2}{\gamma^{2d+2}(1-L(a))} D^d \lambda_d(W) \mathrm{e}^{2\gamma aR(2-L(a))}.
	\end{align*}
	
	For $|I|=1$ we obtain from Lemma \ref{bplem} and \eqref{eqn:Deltad} for \eqref{eqn:Leb} the bound
	\begin{align*}
		\frac{2 (d+1)! 1^{(d)}}{\gamma (1-L(a))} D^{d-1} \int_{\mathbb{H}^{d+1}}    \mathbf{1}\{z(\bm{H}) \in W\} \mathbf{1}\left\{r(\bm{H})>aR\right\} r(\bm{H}) \mathrm e^{-2\gamma r(\bm{H})}   \mu_{d-1}^{d+1} (\mathrm{d}\bm{H}).
	\end{align*}
	From here the assertion follows by \cite[Theorem 7.3.2]{schneider2008stochastic}
	
	If $2\le |I| \le d$ we find using \eqref{eqn:tridis} that \eqref{eqn:Leb} is bounded by
	\begin{align*}
		\mathrm e^{-2\gamma a R(1-L(a))}&\int_{\H^{d+1}}\mu_{d-1}(\mathbb{H}_{B(z(\bm{H}),R+D)})^{d+1-|I|} \mathbf{1}\{z(\bm{H}) \in W\}\mathbf{1}\left\{r(\bm{H})> aR \right\}  \, \mathrm{e}^{-2\gamma r(\bm{H})} \\
		&\times \mu_{d-1}^{d+1}(\mathrm{d}\bm{H}).
	\end{align*}
	The claim now follows from the fact that $\mu_{d-1}(\mathbb{H}_{B(z,r)})=2r$ for all $z\in \mathbb{R}^d$ and all $r>0$ and from \cite[Theorem 7.3.2]{schneider2008stochastic}.
\end{proof}

\begin{proof}[Proof of Theorem \ref{thmr}]
	Let $c\in \mathbb{R}$, $W \subset \R^d$ be compact and let $\nu$ be a Poisson process on $\R^d \times \mathbb{R}$ with intensity measure $\gamma^{(d)} \lambda_d \otimes \varphi$, where $\varphi$ is given by $\varphi((y,\infty))=e^{-y}$ for all $y\in\mathbb{R}$. Since by  \eqref{intmeas} the intensity measures of $\zeta_n \cap (W \times (c,\infty))$ and $\nu \cap (W \times (c,\infty))$ coincide, their total variation is zero for $n$ large enoguh. We apply Theorem \ref{PPA} with
	\begin{align*}
		&g(\bm{H},\omega)=\mathbf{1}\{n^{-1/d}z(\bm{H}) \in W\} \mathbf{1}\{(\omega-\delta_{\bm{H}}) \cap \mathbb{H}_{B(\bm{H})}=\emptyset\}\mathbf{1}\{2 \gamma r(\bm{H})-\log n >c\},\\
		&f(\bm{H},\omega)=(n^{-1/d}z(\bm{H}),2\gamma r(\bm{H})-\log n ),\quad \omega \in \mathbf{N}_{\mathbb{H}},\, \bm{H} \in \omega^{(d+1)},
	\end{align*}
	and the (deterministic) stopping set $\mathcal{S}$ given by $\mathcal{S}(\bm{H},\omega)=\mathbb{H}_{B(\bm{H})}$ for $\bm{H} \in \mathbb{H}^{d+1}$ in general position and all $\omega \in \mathbf{N}_\H$. Letting $S_{\bm{H}}=\mathbb{H}_{B(\bm{H})}$ gives $g=\tilde{g}$ and, therefore, $E_1=0$. Moreover, since  $g(\bm{G},\omega+\delta_{\bm{G}})=1$ implies that $\omega\cap \mathbb{H}_{B(\bm{G})}=\emptyset$, we find that $E_3=E_4=0$. This yields the bound
	\begin{align}
		&\mathbf{d_{KR}}(\zeta_n \cap (W \times (c,\infty)), \nu \cap (W \times (c,\infty))) \le E_2+E_5+E_6
	\end{align}
	with
	\begin{align*}
		E_2&=\frac{2\gamma^{2d+2}}{((d+1)!)^2} \int_{\H^{d+1}} \int_{\H^{d+1}} \mathbf{1}\{\delta_{(\bm{H},\bm{G})}\cap \H_{B(\bm{H})} \cap \H_{B(\bm{G})} \neq \emptyset\} \mathbf{1}\{z(\bm{H}) \in W_n,\,z(\bm{G}) \in W_n\} \\
		& \times \,\mathbf{1}\left\{r(\bm{H})>\frac{c+\log n}{2\gamma}\right\} \mathbf{1}\left\{r(\bm{G})>\frac{c+\log n}{2\gamma}\right\}\,\mathrm{e}^{-2\gamma r(\bm{H})}\,\mathrm{e}^{-2\gamma r(\bm{G})}\,\mu_{d-1}^{d+1}(\mathrm{d}\bm{G})\,\mu_{d-1}^{d+1}(\mathrm{d}\bm{H}),\\
		E_5&=\frac{2\gamma^{2d+2}}{((d+1)!)^2} \int_{\H^{d+1}} \int_{\H^{d+1}} \P(\eta \cap \H_{B(\bm{H})} \cap  \H_{B(\bm{G})} \neq \emptyset)  \mathbf{1}\{z(\bm{H}) \in W_n,\,z(\bm{G}) \in W_n\} \\
		& \times \,\mathbf{1}\left\{r(\bm{H})>\frac{c+\log n}{2\gamma}\right\} \mathbf{1}\left\{r(\bm{G})>\frac{c+\log n}{2\gamma}\right\}\,\mathrm{e}^{-\gamma \mu_{d-1}(\H_{B(\bm{H})}\cup \H_{B(\bm{G})})}\\
		& \times \mathbf{1}\{r(\bm{H})+r(\bm{G})\le\|z(\bm{H})-z(\bm{G})\|\} \,\mu_{d-1}^{d+1}(\mathrm{d}\bm{G})\,\mu_{d-1}^{d+1}(\mathrm{d}\bm{H}),\\
		E_6&=\frac{2\gamma^{d+1}}{(d+1)!} \sum_{\emptyset \subsetneq I \subsetneq \{1,\dots,d+1\}} \frac{\gamma^{|I|}}{(d+1-|I|)!}\int_{\H^{d+1}} \int_{\H^{d+1-|I|}} \mathbf{1}\{z(\bm{H}) \in W_n,\,z(\bm{H}_I,\bm{G}) \in W_n\} \\
		&  \times \,\mathbf{1}\left\{r(\bm{H})>\frac{c+\log n}{2\gamma}\right\} \mathbf{1}\left\{r(\bm{H}_I,\bm{G})>\frac{c+\log n}{2\gamma}\right\}\,\mathrm{e}^{-\gamma \mu_{d-1}(\H_{B(\bm{H})}\cup \H_{B(\bm{H}_I,\bm{G})})}\\
		& \times \mathbf{1}\{r(\bm{H})+r(\bm{H}_I,\bm{G})\le\|z(\bm{H})-z(\bm{H}_I,\bm{G})\|\}\, \mu_{d-1}^{d+1-|I|}(\mathrm{d}\bm{G})\,\mu_{d-1}^{d+1}(\mathrm{d}\bm{H}),
	\end{align*}
	where $W_n:=n^{1/d}W=\{x \in \R^d:\,x \in n^{1/d}W\}$  $\bm{H}_I=(H_{i_1},\dots,H_{i_m})$ for $I=\{i_1,\dots,i_m\}$. Moreover, we have used that $\delta_{(\bm{H}_{|I|+1:d+1},\bm{G})}\cap \H_{B(\bm{H}) \cup B(\bm{H}_I,\bm{G})} = \emptyset$ implies that $r(\bm{H})+r(\bm{H}_I,\bm{G})\le\|z(\bm{H})-z(\bm{H}_I,\bm{G})\|$ for $I \subsetneq \{1,\dots,d+1\}$. 
	
	Next we consider the terms $E_2,E_5,E_6$ separately. In the following, $c_i>0$ $(i \in \mathbb{N})$ are positive constants. Their exact values are not important for the argument.
	
	{\em The estimate of $E_2$.} Note that $$\mathbf 1\{\delta_{(\bm{H},\bm{G})}\cap \H_{B(\bm{H})} \cap \H_{B(\bm{G})} \neq \emptyset\}\le \sum_{i=1}^{d+1} \mathbf 1\{H_i \in \mathbb{H}_{B(\bm{G})}\}+  \sum_{i=1}^{d+1} \mathbf 1\{G_i \in \mathbb{H}_{B(\bm{H})}\}.$$ From symmetry in $\bm{H}$ and $\bm{G}$ we find that $E_2$ is bounded by
	\begin{align*}
		&\frac{4\gamma^{2d+2}}{(d+1)!d!} \int_{\H^{d+1}} \int_{\H^{d+1}}  \mathbf 1\{G_1 \in \H_{B(\bm{H})}\} \mathbf{1}\{z(\bm{H}) \in W_n\} \mathbf{1}\{z(\bm{G}) \in W_n\}\, \mathbf{1}\left\{r(\bm{H})>\frac{c+\log t}{2\gamma}\right\} \\
		&\quad  \times \,\mathbf{1}\left\{r(\bm{G})>\frac{c+\log n}{2\gamma}\right\} \mathrm{e}^{-2\gamma r(\bm{H})}\,\mathrm{e}^{-2\gamma r(\bm{G})}\,\mu_{d-1}^{d+1}(\mathrm{d}\bm{G})\,\mu_{d-1}^{d+1}(\mathrm{d}\bm{H}),
	\end{align*}
	where $\bm{G}=(G_1,\dots,G_{d+1})$. Next we apply Lemma \ref{bplem} with $\ell=1$ to the inner integral (note that $\nabla_1(v)=1$ for all $v \in S^{d-1}$) and obtain
	\begin{align*}
		&\frac{2^{d+3} \gamma^{2d+2} }{(d+1)!} \int_{\H^{d+1}} \int_{\H_{B(\bm{H})}} \int_{G_1^\perp } \int_{(S^{d-1})^{d}} \int_{\frac{c+\log n}{2\gamma}}^\infty  \int_{W_n\cap(G_1-r u_1) } \mathbf{1}\{z(\bm{H}) \in W_n\} \nonumber\\
		&\quad\times\, \mathbf{1}\left\{r(\bm{H})>\frac{c+\log n}{2\gamma}\right\}\, \mathrm{e}^{-2\gamma r(\bm{H})} \mathrm{e}^{-2\gamma s} \Delta_d(u_{1:d+1}) \mathbf{1}_\mathsf{P}(u_{1:d+1})\\
		&\quad \times \,\lambda_{d-1}(\mathrm{d}w)\, \mathrm{d}s \,\sigma_{d-1}^{d}(\mathrm{d}u_{2:d+1})\,\sigma_1(\mathrm{d} u_1)\,\mu_{d-1}(\mathrm{d}G_1)\,\mu_{d-1}^{d+1}(\mathrm{d}\bm{H}).
	\end{align*}
	Since $\lambda_{d-1}(W_n\cap(G_1-rv_1)) \le n^{(d-1)/d} \text{diam}(W)^{d-1} $, \eqref{defgammad} and  $\mu_{d-1}(\mathbb{H}_{B(\bm{H})})=2r(\bm{H})$ (see \eqref{repmu}), the above is bounded by
	\begin{align*}
		\frac{4\gamma^{d+1} \gamma^{(d)}\mathrm{diam}(W)^{d-1} \mathrm{e}^{-c}}{ n^{1/d}d!}& \int_{\H^{d+1}} r(\bm{H}) \mathbf{1}\{z(\bm{H}) \in W_n\}\, \mathbf{1}\left\{r(\bm{H})>\frac{c+\log n}{2\gamma}\right\} \mathrm{e}^{-2\gamma r(\bm{H})} \\
		&\quad \times \mu_{d-1}^{d+1}(\mathrm{d}\bm{H}).
	\end{align*}
	From \cite[Theorem 7.3.2]{schneider2008stochastic} we conclude that $E_2$ is bounded by $c_1  n^{-1/d} \log n.$ 
	
	{\em The estimate of $E_5$.}
	Given $\delta>0$ (to be specified later), we split $E_5$ into the sum 
	\begin{align}
		&\frac{2\gamma^{2d+2}}{((d+1)!)^2} \int_{\H^{d+1}} \int_{\H^{d+1}} \mathbf{1}\{z(\bm{H}) \in W_n\}\, \mathbf{1}\{z(\bm{G}) \in W_n\}\nonumber\\
		&\quad \times \mathbf{1}\left\{r(\bm{H})>\frac{c+\log n}{2\gamma}\right\} \mathbf{1}\left\{r(\bm{G})>\frac{c+\log n}{2\gamma}\right\}\nonumber\\
		&\quad  \times \mathbf{1}\{\max\{r(\bm{H}),r(\bm{G})\}>\frac{(2+\delta) (c+\log n)}{2\gamma} \text{ or } \|z(\bm{H})-z(\bm{G})\|\le (\log n)^{(d+1)/d}\}\,\nonumber\\
		&\quad \times\,\mathrm{e}^{-\gamma \mu_{d-1}(\H_{B(\bm{H})}\cup \H_{B(\bm{G})})}\mathbf{1}\{r(\bm{H})+r(\bm{G})\le\|z(\bm{H})-z(\bm{G})\|\} \,\mu_{d-1}^{d+1}(\mathrm{d}\bm{G})\,\mu_{d-1}^{d+1}(\mathrm{d}\bm{H})\label{rE5s1}\\
		&+\frac{2\gamma^{2d+2}}{((d+1)!)^2} \int_{\H^{d+1}} \int_{\H^{d+1}} \P(\eta \cap \H_{B(\bm{H})} \cap  \H_{B(\bm{G})} \neq \emptyset) \mathbf{1}\{z(\bm{H}) \in W_n\}\, \mathbf{1}\{z(\bm{G}) \in W_n\} \nonumber\\
		&\quad  \times \,\mathbf{1}\left\{r(\bm{H})>\frac{c+\log n}{2\gamma}\right\} \mathbf{1}\left\{r(\bm{G})>\frac{c+\log n}{2\gamma}\right\}\nonumber\\
		&\quad \times \mathbf{1}\{\max\{r(\bm{H}),r(\bm{G})\} \le \frac{(2+\delta) (c+\log n)}{2\gamma}\}\, \mathbf{1}\{\|z(\bm{H})-z(\bm{G})\|> (\log n)^{(d+1)/d}\}\nonumber\\
		&\quad \times \mathrm{e}^{-\gamma \mu_{d-1}(\H_{B(\bm{H})}\cup \H_{B(\bm{G})})} \mathbf{1}\{r(\bm{H})+r(\bm{G})\le\|z(\bm{H})-z(\bm{G})\|\} \,\mu_{d-1}^{d+1}(\mathrm{d}\bm{G})\,\mu_{d-1}^{d+1}(\mathrm{d}\bm{H}).\label{rE5s2}
	\end{align}
	Note that \eqref{rE5s1} is bounded by
	\begin{align}
		&\int_{\H^{d+1}} \int_{\H^{d+1}} \mathbf{1}\{z(\bm{H}) \in W_n\}\, \mathbf{1}\{z(\bm{G}) \in W_n\}\,\mathbf{1}\left\{\max\{r(\bm{H}),r(\bm{G})\}>\frac{(2+\delta) (c+\log n)}{2\gamma}\right\} \nonumber\\
		&\quad  \times\,\mathrm{e}^{-\gamma \mu_{d-1}(\H_{B(\bm{H})}\cup \H_{B(\bm{G})})} \,\mu_{d-1}^{d+1}(\mathrm{d}\bm{G})\,\mu_{d-1}^{d+1}(\mathrm{d}\bm{H})\label{rE5s2a}\\
		&\,+\int_{\H^{d+1}} \int_{\H^{d+1}} \mathbf{1}\{z(\bm{H}) \in W_n\}\, \mathbf{1}\{\|z(\bm{H})-z(\bm{G})\| \le (\log n)^{(d+1)/d}\}\,\mathrm{e}^{-\gamma \mu_{d-1}(\H_{B(\bm{H})}\cup \H_{B(\bm{G})})}  \nonumber\\
		&\quad  \times\,\mathbf{1}\left\{\frac{(2+\delta) (c+\log n)}{2 \gamma}\ge r(\bm{G})>\frac{c+\log n}{2 \gamma},\,\frac{(2+\delta) (c+\log n)}{2 \gamma} \ge r(\bm{H})>\frac{c+\log n}{2 \gamma}\right\} \nonumber\\
		&\quad \times \mathbf{1}\{r(\bm{H})+r(\bm{G})\le\|z(\bm{H})-z(\bm{G})\|\} \,\mu_{d-1}^{d+1}(\mathrm{d}\bm{G})\,\mu_{d-1}^{d+1}(\mathrm{d}\bm{H}). \label{rE5s2b}
	\end{align}
	From Lemma \ref{Le:le1}(a) with $I=\emptyset$ and $R:= \frac{(2+\delta) (c+\log n)}{2 \gamma}$ we get for \eqref{rE5s2a} the bound
	$c_2 n^{-\delta} \log n$. By part (b) of the same lemma with $a:=(2+\delta)^{-1}$ and $D:=(\log n)^{(d+1)/d}$ we conclude that \eqref{rE5s2b} is bounded by $c_3 n^{-(1-L(\frac{1}{2+\delta}))} (\log n)^{d+1}$. Hence, letting $\delta^*$ be the solution of the fixed point equation
	\begin{align}
		\delta=1-L\left(\frac{1}{2+\delta}\right):=\frac{\omega_{d-1}}{\omega_d} \int_{\frac{2+\delta}{3+\delta}}^1(1-x^2)^{\frac{d-3}{2}} \,\mathrm{d}x, \label{fixequ}
	\end{align}
	we find that \eqref{rE5s1} is bounded by $c_3 n^{-\delta^*} (\log n)^{d+1}$.
	
	Now we discuss \eqref{rE5s2}. If $r(\bm{G})\le r(\bm{H})$, \eqref{estmuHH} gives 
	\begin{align}
		\mu_{d-1}(\H_{B(\bm{H})}\cap \H_{B(\bm{G})}) =\frac{2 \omega_{d-1}}{\omega_d} \int_0^{r(\bm{G})}  \int_{\frac{s-r(\bm{H})}{\|z(\bm{H})-z(\bm{G})\|}}^{\frac{s+r(\bm{H})}{\|z(\bm{H})-z(\bm{G})\|}} (1-x^2)^{\frac{d-3}{2}} \mathrm{d}x \,\mathrm{d}s\le \frac{4\omega_{d-1} r(\bm{H})r(\bm{G})}{\omega_d \|z(\bm{H})-z(\bm{G})\|}, \label{mubou}
	\end{align}
	where the inequality is obtained by bounding the integrand by 1. Hence, for $\max\{r(\bm{H}),r(\bm{G})\}\le \frac{(2+\delta) (c+\log n)}{2\gamma}$ and $\|z(\bm{H})-z(\bm{G})\|> (\log n)^{(d+1)/d}$, we obtain from the inclusion-exclusion principle the bound 
	\begin{align}
		\mu_{d-1}(\H_{B(\bm{H})}\cup \H_{B(\bm{G})}) \ge 2\gamma r(\bm{H})+2\gamma r(\bm{G})-c_3. \label{mulowbou}
	\end{align}
	This helps us as follows to bound \eqref{rE5s2}. Assuming that $r(\bm{G})\le r(\bm{H})$ (at the cost of a factor 2), we use the bound $ \P(\eta \cap \H_{B(\bm{H})} \cap  \H_{B(\bm{G})} \neq \emptyset) \le \gamma \mu_{d-1}(\H_{B(\bm{H})}\cap \H_{B(\bm{G})})$ together with \eqref{mubou}. To bound the exponential function in the integrand of \eqref{rE5s2} we use \eqref{mulowbou}. This gives for \eqref{rE5s2} the bound
	\begin{align}
		&\frac{16 \omega_{d-1}\gamma^{2d+3}e^{c_3}}{\omega_d((d+1)!)^2} \int_{\H^{d+1}} \int_{\H^{d+1}} \frac{r(\bm{H})^2}{\|z(\bm{H})-z(\bm{G})\|} \mathbf{1}\{z(\bm{H}) \in W_n\}\, \mathbf{1}\{z(\bm{G}) \in W_n\} \nonumber\\
		&\quad  \times \,\mathbf{1}\left\{r(\bm{H})\ge r(\bm{G})>\frac{c+\log n}{2\gamma}\right\} \,\mathrm{e}^{-2\gamma r(\bm{H})-2 \gamma r(\bm{G})} \,\mu_{d-1}^{d+1}(\mathrm{d}\bm{G})\,\mu_{d-1}^{d+1}(\mathrm{d}\bm{H}).\label{rE5s22}
	\end{align}
	From \cite[Theorem 7.3.2]{schneider2008stochastic} and \eqref{est_Q0} we obtain that the above is given by
	\begin{align*}
		\frac{16\omega_{d-1} \gamma^{(d)}\gamma^{d+2}}{n\omega_d(d+1)!e^{-c_3}} \int_{\H^{d+1}}& \int_{W_n} \frac{ r(\bm{H})^2}{\|z(\bm{H})-z\|}  \mathbf{1}\Big\{z(\bm{H}) \in W_n\Big\} \mathbf{1}\Big\{r(\bm{H})>\frac{c+\log n}{2\gamma}\Big\} \,\mathrm{e}^{-2\gamma r(\bm{H})}\\ 
		&\quad \times\mathrm{d}z\,\mu_{d-1}^{d+1}(\mathrm{d}\bm{H}).
	\end{align*}
	We introduce spherical coordinates around $z(\bm{H})$ in the inner integration, let $w:=\mathrm{diam}(W)$ and obtain the bound
	\begin{align*}
		&\frac{16\omega_{d-1}\gamma^{(d)}\gamma^{d+2}}{n(d+1)!e^{-c_3}} \int_{\H^{d+1}} \	\int_{0}^{n^{1/d}w} t^{d-2}  r(\bm{H})^2 \mathbf{1}\Big\{z(\bm{H}) \in W_n\Big\} \mathbf{1}\Big\{r(\bm{H})>\frac{c+\log n}{2\gamma}\Big\}\\
		&\qquad \qquad\qquad\qquad\qquad\qquad\times \mathrm{e}^{-2\gamma r(\bm{H})}\mathrm{d}t\mu_{d-1}^{d+1}(\mathrm{d}\bm{H})\\
		&	\quad =\frac{16\omega_{d-1}\gamma^{(d)}\gamma^{d+2} w^{d-1}}{n^{1/d}(d-1)(d+1)!e^{-c_3}}  \int_{\H^{d+1}} r(\bm{H})^2 \mathbf{1}\Big\{z(\bm{H}) \in W_n\Big\} \mathbf{1}\Big\{r(\bm{H})>\frac{c+\log n}{2\gamma}\Big\}\\
		&\qquad \qquad\qquad\qquad\qquad\qquad\times \mathrm{e}^{-2\gamma r(\bm{H})}\mu_{d-1}^{d+1}(\mathrm{d}\bm{H}).
	\end{align*}
	From \cite[Theorem 7.3.2]{schneider2008stochastic} and \eqref{est_Q0} we conclude that \eqref{rE5s2} is bounded by $c_{4}  n^{-1/d}(\log n)^2$.

	{\em The estimate of $E_6$.}
	Analogously to $E_5$, we split $E_6$ into the sum
	\begin{align}
		&\frac{2\gamma^{d+1}}{(d+1)!}\sum_{\emptyset \subsetneq I \subsetneq \{1,\dots,d+1\}} \frac{\gamma^{|I|}}{(d+1-|I|)!}\int_{\H^{d+1}} \int_{\H^{d+1-|I|}} \mathbf{1}\{z(\bm{H}) \in W_n\}\, \mathbf{1}\{z(\bm{H}_I,\bm{G}) \in W_n\} \nonumber\\
		&\quad  \times \,\mathbf{1}\left\{r(\bm{H})>\frac{c+\log n}{2\gamma}\right\} \mathbf{1}\left\{r(\bm{H}_I,\bm{G})>\frac{c+\log n}{2\gamma}\right\}\,\mathrm{e}^{-\gamma \mu_{d-1}(\H_{B(\bm{H})}\cup \H_{B(\bm{H}_I,\bm{G})})} \nonumber\\
		&\quad \times \mathbf{1}\{\max\{r(\bm{H}),r(\bm{H}_I,\bm{G})\}>\frac{(2+\delta) (c+\log n)}{2\gamma} \text{ or } \|z(\bm{H})-z(\bm{H}_I,\bm{G})\|\le  (\log n)^2\}\nonumber\\
		&\quad \times \mathbf{1}\{r(\bm{H})+r(\bm{H}_I,\bm{G})\le\|z(\bm{H})-z(\bm{H}_I,\bm{G})\|\}\, \mu_{d-1}^{d+1-|I|}(\mathrm{d}\bm{G})\,\mu_{d-1}^{d+1}(\mathrm{d}\bm{H}) \label{rE6s1}\\
		&\,+\frac{2\gamma^{d+1}}{(d+1)!}\sum_{\emptyset \subsetneq I \subsetneq \{1,\dots,d+1\}} \frac{\gamma^{|I|}}{(d+1-|I|)!}\int_{\H^{d+1}} \int_{\H^{d+1-|I|}} \mathbf{1}\{z(\bm{H}) \in W_n\}\, \mathbf{1}\{z(\bm{H}_I,\bm{G}) \in W_n\} \nonumber\\
		&\quad  \times \,\mathbf{1}\left\{r(\bm{H})>\frac{c+\log n}{2\gamma}\right\} \mathbf{1}\left\{r(\bm{H}_I,\bm{G})>\frac{c+\log n}{2\gamma}\right\}\,\mathrm{e}^{-\gamma \mu_{d-1}(\H_{B(\bm{H})}\cup \H_{B(\bm{H}_I,\bm{G})})} \nonumber\\
		&\quad \times \mathbf{1}\{\max\{r(\bm{H}),r(\bm{H}_I,\bm{G})\}\le \frac{(2+\delta) (c+\log n)}{2\gamma},\, \|z(\bm{H})-z(\bm{H}_I,\bm{G})\|> (\log n)^2\}\nonumber\\
		&\quad \times \mathbf{1}\{r(\bm{H})+r(\bm{H}_I,\bm{G})\le\|z(\bm{H})-z(\bm{H}_I,\bm{G})\|\}\, \mu_{d-1}^{d+1-|I|}(\mathrm{d}\bm{G})\,\mu_{d-1}^{d+1}(\mathrm{d}\bm{H}). \label{rE6s2}
	\end{align}
	Argueing analogously to the bound of \eqref{rE5s1}, we obtain that \eqref{rE6s1} is bounded by $c_{5}  n^{-\delta^*} \log n$.
	
	Now we consider \eqref{rE6s2}. Using \eqref{mulowbou} we obtain that  \eqref{rE6s2} is bounded by
	\begin{align}
		&\frac{4\gamma^{d+1}e^{c_3}}{(d+1)!}\sum_{\emptyset \subsetneq I \subsetneq \{1,\dots,d+1\}} \frac{\gamma^{d+1-|I|}}{(d+1-|I|)!}\int_{\H^{d+1}} \int_{\H^{d+1-|I|}} \mathbf{1}\{z(\bm{H}) \in W_n\}\, \mathbf{1}\{z(\bm{H}_I,\bm{G}) \in W_n\} \nonumber\\
		&\quad  \times \mathbf{1}\left\{r(\bm{H})>r(\bm{H}_I,\bm{G})>\frac{c+\log n}{2\gamma}\right\}\,\mathrm{e}^{-2 \gamma r(\bm{H})-2\gamma r(\bm{H}_I,\bm{G})} \mu_{d-1}^{d+1-|I|}(\mathrm{d}\bm{G})\,\mu_{d-1}^{d+1}(\mathrm{d}\bm{H}).\label{rE6s11}
	\end{align}
	Now we distinguish the cases $|I|=1$ and $2 \le |I| \le d$. For every summand in \eqref{rE6s11} with $|I|=1$ we obtain from Lemma \ref{bplem} with $\ell=1$
	\begin{align}
		& 2^{d+1}\gamma^{d} \int_{\mathbb{H}^{d+1}} \int_{H_1^\perp } \int_{\frac{c+\log n}{2\gamma}}^{r(\bm{H})} \int_{(S^{d-1})^{d}} \int_{(H_1-su_1) \cap W_n}  \mathbf{1}\{z(\bm{H}) \in W_n\}  \mathbf{1}\left\{r(\bm{H})>\frac{c+\log n}{2\gamma}\right\} \nonumber\\
		&\qquad \times \mathrm{e}^{-2 \gamma r(\bm{H})-2\gamma s} \Delta_d(u_{1:d+1}) \mathbf{1}_\mathsf{P}(u_{1:d+1})  \lambda_{d-1} (\mathrm{d}w) \,\sigma_{d-1}^{d}(\mathrm{d}u_{2:d+1}) \,\mathrm{d}s \,\sigma_1 (\mathrm{d}u_{1}) \mu_{d-1}^{d+1} (\mathrm{d}\bm{H})\nonumber\\
		& \, \le (d+1) 1^{(d)} w^{d-1}\gamma^{d-1} n^{-1/d} \int_{\mathbb{H}^{d+1}} \mathbf{1}\{z(\bm{H}) \in W_n\}  \mathbf{1}\left\{r(\bm{H})>\frac{c+\log n}{2\gamma}\right\} r(\bm{H})\nonumber\\
		&\qquad \qquad\qquad\qquad\qquad\qquad\qquad\qquad\times \mathrm{e}^{-2 \gamma r(\bm{H})}  \mu_{d-1}^{d+1} (\mathrm{d}\bm{H}), \label{rtv-1est1}
	\end{align}
	where $w:=\textrm{diam}(W)$ and we have used the bound $\lambda_{d-1}((H_1-su_1)\cap W_n)\le \textrm{diam}(W)n^{\frac{d-1}{d}}$ and \eqref{defgammad}. An application of \cite[Theorem 7.3.2]{schneider2008stochastic} gives that \eqref{rtv-1est1} is bounded by $c_6 n^{-1/d}\log n$.
	
	For $2 \le |I| \le d$ we exploit the bound
	\begin{align*}
		d(G_i,o)\le d(G_i,z(\bm{H}_I,\bm{G}))+ d(z(\bm{H}_I,\bm{G}),o)\le r(\bm{H})+\textrm{diam}(W_n),\quad i \in \{1,\dots,d+1\}\setminus I.
	\end{align*}
	This yields for every summand in \eqref{rtv-1est1} with $2 \le |I| \le d$ the bound
	\begin{align*}
		\frac{\gamma^{d+1-|I|}}{(d+1-|I|)!}&\int_{\H^{d+1}} \mu_{d-1}(\mathbb{H}_{B(o,r(\bm{H})+\textrm{diam}(W_n))})^{d+1-|I|} \mathbf{1}\{z(\bm{H}) \in W_n\}\\
		&\times  \mathbf{1}\left\{r(\bm{H})>\frac{c+\log n}{2\gamma}\right\}\,\mathrm{e}^{-2 \gamma r(\bm{H})} \,\mu_{d-1}^{d+1}(\mathrm{d}\bm{H}).
	\end{align*}
	Using that $\mu_{d-1}(\mathbb{H}_{B(o,r)})=2r$ for $r>0$ and letting $w:=\textrm{diam}(W)$ we obtain the bound
	\begin{align*}
		\frac{(2\gamma)^{d+1-|I|}}{(d+1-|I|)!}&\int_{\H^{d+1}} \Big(r(\bm{H})+\frac w{n^{1/d}}\Big)^{d+1-|I|} \mathbf{1}\{z(\bm{H}) \in W_n\} \mathbf{1}\left\{r(\bm{H})>\frac{c+\log n}{2\gamma}\right\}\\
		&\quad \times\,\mathrm{e}^{-2 \gamma r(\bm{H})} \,\mu_{d-1}^{d+1}(\mathrm{d}\bm{H}).
	\end{align*}	
	By another application of \cite[Theorem 7.3.2]{schneider2008stochastic} we arrive at the bound $c_7 n^{-1/d} \log n$.
	
	It remains to show that the solution $\delta^*=\delta^*(d)$ of \eqref{fixequ} is less than or equal to $1/d$ for all $d\ge 2$. For $d=2$ this can be checked easily. Since $1-L(\frac{1}{2+\delta})$ is decreasing in $\delta$, it suffices for $d\ge 3$ to show that $1-L(\frac{d}{2d+1})\le \frac1d$. To establish this, we first bound $\frac{\omega_{d-1}}{\omega_d}$. Since $\omega_d:=\frac{2\pi^{d/2}}{\Gamma(d/2)}$ for $d \in \mathbb{N}$, $\Gamma(n+\frac12)=\frac{(2n)!\sqrt{\pi}}{4^nn!}$ for $n \in \mathbb{N}$ and using Stirling's formula (\cite{robbins1955remark}) 
	\begin{align*}
		\sqrt{2 \pi n}\, e^{1/(12n+1)} (n/\mathrm{e})^n \le n! \le \sqrt{2 \pi n} \,e^{1/(12n)} (n/\mathrm{e})^n,
	\end{align*}
	we find that $\frac{\omega_{d-1}}{\omega_d} \le \sqrt{\frac{d-1}{2\pi}}$. Using that $x\mapsto (1-x^2)^{\frac{d-3}{2}}$ is for $d \ge 3$ decreasing on the interval  $[0,1]$, we thus obtain
	\begin{align*}
		1-L\Big(\frac{1}{2+\delta}\Big)=\frac{\omega_{d-1}}{\omega_d} \int_{\frac{2d+1}{3d+1}}^{1} (1-x^2)^{\frac{d-3}{2}}\mathrm{d}x\le \sqrt{\frac{d-1}{2\pi}} \frac{d}{3d+1}\Big(1-\Big(\frac{2d+1}{3d+1}\Big)^2\Big)^{\frac{d-3}{2}}.
	\end{align*}
	Since $\frac{d}{3d+1}\le \frac 13$ and $1-(\frac{2d+1}{3d+1})^2\le \frac 59$ for $d \in \mathbb{N}$, the above is bounded by
	\begin{align*}
		\frac{9\sqrt{d-1}}{5\sqrt{10\pi} } \Big(\frac{\sqrt 5}{3}\Big)^d.
	\end{align*}
	Since $\frac{9d\sqrt{d-1}}{5\sqrt{10\pi} } (\frac{\sqrt 5}{3})^d <1$ for $d \in \mathbb{N}$, we deduce that $\delta^*(d)\in (0,1/d)$.
	
	We conclude that for $n$ large enough
	\begin{align*}
		\mathbf{d_{KR}}(\zeta_n \cap (W \times (c,\infty)), \nu \cap (W \times (c,\infty))) \le C  n^{-\delta^*} (\log n)^{d+1},
	\end{align*}
	where $\delta^*>0$ is the solution of the fixed point equation \eqref{fixequ} and the constant $C>0$ depends on $d$, $\gamma$, $W$ and $c$.
\end{proof}

\section{Asymptotic shape of the typical cell}
For the proof of Theorem \ref{thmsigma} we need the notion of the typical cell $Z$ of the random mosaic generated by $\eta$. That is any random polytope with distribution $\mathbb{Q}_0$ on $\mathcal{K}^d$ given by
\begin{align}
	\gamma^{(d)} \mathbb{Q}_0(\cdot)=\sum_{\bm{H}\in \eta^{(d+1)}} \mathbf 1\{(\eta-\delta_{\bm{H}}) \cap \mathbb{H}_{B(\bm{H})}=\emptyset\} \mathbf 1\{C(\bm{H},\eta)-z(\bm{H})\in \cdot\} \mathbf 1\{z(\bm{H})\in [0,1]^d\}.\label{eqn:typZ}
\end{align}
For an explicit integral representation of $\mathbb{Q}_0$ we refer to \cite[Theorem 10.4.6]{schneider2008stochastic}. The following lemma ensures that the distribution function $F$ of $\Sigma(Z)$ is strictly increasing and continuous on $[\gamma^{-1/k},\infty)$. This implies  that the function $G:=\frac{1}{1-F}$ introduced in Section 2 is well-defined and invertible on $[\gamma^{-1/k},\infty)$.

\begin{lemma} \label{sigmatrans}
	Let $Z$ be the typical cell in the random mosaic generated by a stationary Poisson hyperplane process with intensity $\gamma>0$ and let $\Sigma$ be a $k$-homogeneous size function. On the interval $[\gamma^{-1/k},\infty)$, the distribution $\mathbb{P}_{\Sigma(Z)}$ of $\Sigma(Z)$ and the Lebesgue measure $\lambda_1$ are equivalent.
\end{lemma}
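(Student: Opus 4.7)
My starting point is an integral representation for $\mathbb Q_0$ obtained by combining the Slivnyak--Mecke identity applied to \eqref{eqn:typZ} with the Blaschke--Petkantschin formula \cite[Theorem 7.3.2]{schneider2008stochastic} for the $d+1$ facet hyperplanes touching the inball: for every bounded Borel $f$,
\begin{equation*}
\mathbb E[f(\Sigma(Z))]=c_0\int_{\mathsf P}\Delta_d(\bm u)\int_0^\infty \mathbb E_\eta\bigl[f(\Sigma(C^*(\bm u,r)))\bigr]\,e^{-2\gamma r}\,dr\,\sigma_{d-1}^{d+1}(d\bm u),
\end{equation*}
for a constant $c_0>0$, where $\eta$ is an independent stationary isotropic Poisson hyperplane process of intensity $\gamma\mu_{d-1}$ and $C^*(\bm u,r):=\Delta(\bm u,r,o)\cap\bigcap_{H\in\eta\cap\mathbb H^{B(o,r)}}H_o^-$ is the cell with inball $B(o,r)$ and simplex facet normals $\bm u$.

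For the direction $\mathbb P_{\Sigma(Z)}\ll\lambda_1$, I fix $\bm u\in\mathsf P$ and a realisation of $\eta$ and study the path $\phi(r):=\Sigma(C^*(\bm u,r))$. Enlarging $r$ dilates the simplex $\Delta(\bm u,r,o)$ about $o$ and simultaneously removes hyperplanes from the conditioning set $\eta\cap\mathbb H^{B(o,r)}$; since the inball of $C^*(\bm u,r)$ at $o$ has radius exactly $r$, the set inclusion $C^*(\bm u,r)\subsetneq C^*(\bm u,r')$ for $r<r'$ is strict and $\phi$ is nondecreasing with $\phi(r)\ge r^k\Sigma(B(o,1))$. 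Off the a.s.\ countable jump set $\{d(H,o):H\in\eta\}$ the map $\phi$ is continuous. Any flat portion $(r_1,r_2)$ of $\phi$ must correspond to the cell being bounded on that interval exclusively by $\eta$-facets; in that case the flat value $\phi(r_1)$ depends continuously on the finitely many radial coordinates $d(H,o)$ of those $\eta$-hyperplanes. Since these coordinates are absolutely continuously distributed by \eqref{repmu}, averaging over $\eta$ smears out every would-be atom of the pushforward $\phi_{\#}(e^{-2\gamma r}\,dr)$, and a further integration over $\bm u$ against $\Delta_d(\bm u)\,\sigma_{d-1}^{d+1}(d\bm u)$ yields $\mathbb P_{\Sigma(Z)}\ll\lambda_1$.

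For the converse $\lambda_1\ll\mathbb P_{\Sigma(Z)}$ on $[\gamma^{-1/k},\infty)$ it is enough to prove $F(t_1)<F(t_2)$ for every $\gamma^{-1/k}\le t_1<t_2$. Given such $t_1<t_2$, pick $r_0>0$ with $\Sigma(B(o,r_0))\in(t_1,t_2)$, which is possible because $\Sigma$ is continuous, $k$-homogeneous, and $\Sigma(B(o,1))>0$. I exhibit a positive-probability event on which $\Sigma(Z)\in(t_1,t_2)$ by requiring, for small $\delta>0$, that $\bm u$ lies in a small neighbourhood of the normals of a regular simplex circumscribing $B(o,r_0)$, that the inradius is in $(r_0-\delta,r_0+\delta)$, and that $\eta\cap\mathbb H_{B(o,r_0+2\delta)}=\bm H$ (a Poisson void event). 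On this event $C^*(\bm u,r)$ is uniformly Hausdorff-close to $B(o,r_0)$, so continuity of $\Sigma$ gives $\Sigma(Z)\in(t_1,t_2)$; positivity of the probability follows from the Poisson void probability $e^{-2\gamma(r_0+2\delta)}$ combined with the density $\propto\Delta_d(\bm u)e^{-2\gamma r}$ of the representation. The threshold $\gamma^{-1/k}$ is precisely the scale above which such ball-shaped events carry positive $\mathbb Q_0$-mass.

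The main technical obstacle is the ``smearing'' step in the first direction: strict pathwise monotonicity of $\phi$ is not accessible from the hypotheses on $\Sigma$ (continuity, $k$-homogeneity, monotonicity and ball extremals do not entail strict monotonicity under proper convex-body inclusion in general), so the absolute continuity has to be extracted by disintegrating the Poisson process with respect to the finitely many hyperplanes realising the facets of $C^*(\bm u,r)$ on each flat and exploiting the Lebesgue density of their radial coordinates from \eqref{repmu}. Carrying out this disintegration rigorously---in particular bookkeeping the discrete changes in the identity of the ``$\eta$-facets'' as $r$ and $\bm u$ vary---is where the bulk of the technical work will lie.
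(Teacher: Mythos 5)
The proposal takes a fundamentally different route from the paper (which cites estimates on $q_{a,h}(m)$ from \cite{hug2007asymptotic} and \cite{hug2007typical} to bound $\mathbb P(\Sigma(Z)\in a(1,1+h))$ above by $c(\kappa)h\exp(-(1-\kappa/2)\tau a^{1/k}\gamma)$ and below via \cite[Lemma 4.1]{hug2007typical}), but the route has a genuine logical gap in the second direction. You claim that for $\lambda_1\ll\mathbb P_{\Sigma(Z)}$ on $[\gamma^{-1/k},\infty)$ ``it is enough to prove $F(t_1)<F(t_2)$ for every $\gamma^{-1/k}\le t_1<t_2$.'' This is false: strict monotonicity of $F$ does not imply $\lambda_1\ll\mathbb P_{\Sigma(Z)}$. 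For instance, if $U\subset[a,b]$ is a dense open set with $\lambda_1(U)<b-a$ and $F$ has density $c\,\mathbf 1_U$, then $F$ is strictly increasing (any interval meets $U$ in a nonempty open set), yet the complement $[a,b]\setminus U$ has positive Lebesgue measure and zero $F$-measure. Your construction delivers $\mathbb P_{\Sigma(Z)}((t_1,t_2))>0$ for each fixed pair $t_1<t_2$, but as $t_2-t_1\to0$ the parameter set $(\bm u,r)$ you describe does not shrink in a way that is tied quantitatively to $t_2-t_1$, so you do not obtain the linear-in-$(t_2-t_1)$ lower bound $\mathbb P_{\Sigma(Z)}((t_1,t_2))\ge c(t_1)(t_2-t_1)$ that would be needed to conclude a.e.\ positivity of the density. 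The paper's proof obtains exactly this via \cite[Lemma 4.1]{hug2007typical}, which gives the density lower bound $\frac ca\exp(-(1+\beta)\tau a^{1/k}\gamma)>0$.

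There is a second, acknowledged soft spot in your absolute-continuity direction: pathwise nondecrease of $\phi(r)=\Sigma(C^*(\bm u,r))$ and atomlessness of $\phi_\#(e^{-2\gamma r}dr)$ after averaging over $\eta$ do not by themselves rule out a singular-continuous pushforward (a strictly increasing continuous $\phi$ can still have a derivative vanishing on a set of positive measure). The disintegration over the $\eta$-facet radial coordinates you sketch is the right instinct, but as presented it is a plan rather than a proof, and bookkeeping the changing facet identities as $(\bm u,r)$ varies is precisely what the Hug--Schneider machinery (relative-diameter stratification and the estimate \eqref{eqn:estah}) is built to circumvent. You should either close the $\lambda_1\ll\mathbb P_{\Sigma(Z)}$ direction by extracting a density lower bound that is locally uniform in $a$, or adopt the paper's route through $\mathbb P(\Sigma(Z)\in a(1,1+h))\asymp h$.
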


\begin{proof}
	The proof follows the strategy of Section 9 in \cite{hug2007asymptotic}. We first show that $\mathbb{P}_{\Sigma(Z)}$ is absolutely continuous with respect to $\lambda_1$ on $[\gamma^{-1/k},\infty)$. For $K \in \mathcal{K}^d$ let $D(K)$ be the diameter of $K$ and $\Delta(K):=\frac{D(K)}{c_1\Sigma(K)^{1/k}}$ be the {\em relative diameter}, where $c_1$ is chosen such that $\Delta(K)\ge 1$ for all $K \in \mathcal{K}^d$. For $a>0$, $h>0$ and $m \in \mathbb{N}$ let
	\begin{align*}
		\mathcal{K}_{a,h}(m):=\{K \in \mathcal{K}^d:\,\Sigma(K)\in a(1,1+h),\,\Delta(K)\in [m,m+1)\}
	\end{align*}
	and $q_{a,h}(m):=\mathbb{P}(\Sigma(Z)\in \mathcal{K}_{a,h}(m))$. Fix $\kappa>0$. Analogously to the argumentation before $(41)$ in \cite{hug2007asymptotic} (with \cite[Lemma 5]{hug2007asymptotic} replaced by \cite[Lemma 4.5]{hug2007typical}), we find some $\nu\in \mathbb{N}$ such that
	\begin{align*}
		q_{a,1}(m)\le c(\kappa)m^{d\nu} \exp(-(1-\kappa/4)\tau d^{1/k}\gamma).
	\end{align*}
	From \cite[Lemma 4.3]{hug2007typical} we obtain $q_{a,1}(m)\le c_2\exp(-c_3ma^{1/k}\gamma)$ for some $c_2,c_3 >0$. By \cite[Lemma 4.8]{hug2007typical} we have for some $c_4>0$
	\begin{align*}
		\mathbb{P}(\Sigma(Z)\in a(1,1+h))&=\sum_{m \in \mathbb{N}} q_{a,h}(m)=c_4h a^{1/k} \gamma \Big(\sum_{m\le m_0} q_{a,1}(m)+\sum_{m>m_0} q_{a,1}(m)\Big),
	\end{align*}
	where $m_0 \in \mathbb{N}$ is chosen such that 
	\begin{align}
		c_3 m \ge 2(1-\kappa/4)\tau,\quad m >m_0. \label{eqn:estm}
	\end{align}
	Now we can argue analogously to the proof of Proposition 7.1 in \cite{HRS2004limit} (where \cite[(24)]{HRS2004limit} is replaced by \eqref{eqn:estm}) to arrive that
	\begin{align}
		\mathbb{P}(\Sigma(Z)\in a(1,1+h))\le c_5(\kappa) h \exp(-(1-\kappa/2)\tau a^{1/k}\gamma).\label{eqn:estah}
	\end{align}
	As in \cite[Section 9]{hug2007asymptotic} we can now conclude from \eqref{eqn:estah} that if a set $M \subset [\gamma^{-1/k}, \infty)$ is covered by countably many intervals of total length $\varepsilon$, then the $\mathbb{P}_{\Sigma(Z)}$-measure of $M$ is at most $c_6\varepsilon$, where $c_6$ does not depend on $\varepsilon$. This proves that $\mathbb{P}_{\Sigma(Z)}$ is absolutely continuous with respect to $\lambda_1$ on $[\gamma^{-1/k},\infty)$.
	
	Next we show that the Radon-Nikod\'{y}m density of $\mathbb{P}_{\Sigma(Z)}$ with respect to the Lebesgue measure $\lambda_1$ is positive on $[\gamma^{-1/k}, \infty)$. From \cite[Lemma 4.1]{hug2007typical} we obtain for $a \ge \gamma^{-1/k}$
	\begin{align*}
		\frac{\mathrm{d}\mathbb{P}_{\Sigma(Z)}}{\mathrm{d}\lambda_1} (a) =\lim_{h \downarrow 0} \frac{\mathbb{P}(\Sigma(Z) \in a(1,1+h))}{ah}\ge \frac ca \exp(-(1+\beta) \tau a^{1/k}\gamma)>0.
	\end{align*}
	This gives that $\mathbb{P}_{\Sigma(Z)}$ and $\lambda_1$ are equivalent measures on $[\gamma^{-1/k}, \infty)$.
\end{proof}

In order to measure the deviation of the shape of a convex body $K$ from the shape of a Euclidean ball, we use the deviation function
\begin{align}
	\vt(K):=\min\left\{\frac{R-r}{R+r}: rB^d +z \subset K \subset RB^d+z,\,r,R>0,\,z\in\R^d \right\}  \label{defvt}
\end{align}
from \cite{hug2007typical} and note that $\vt(K)=0$ if and only if $K$ is a Euclidean ball. Let $\Sigma$ be a size function as defined in Section 2. By \cite[Theorem]{hug2007typical}, there exists a continuous function $s:\mathbb{R}^+ \to \mathbb{R}^+$ with $s(\varepsilon)>0$ for all $\varepsilon>0$ and $s(0)=0$ and some constant $c_0>0$ (depending only on $\tau$) such that
\begin{align}
	\P(\vt(Z)\ge \varepsilon \mid \Sigma(Z)>u) \le c_1 \exp(-c_0s(\varepsilon)u^{1/k}\gamma),\quad u>0,\label{Ken}
\end{align}
where $c_1>0$ depends only on $\Sigma, s, \varepsilon$. In \cite{hug2007typical} the function $s$ is called a {\em stability function} for $\Sigma$ and $\vt$.

The distributions of the typical cell $Z$ and the zero cell $Z_0$ (this is the (a.s. unique) cell containing the origin $o \in \R^d$) are linked via
\begin{align}
	\P(\Sigma(Z)\ge u)\le \P(\Sigma(Z_0)\ge u),\quad u>0,\label{cellsmon}
\end{align}
which is a direct consequence of \cite[Lemma 3.1]{hug2007typical}. Nevertheless, the distributions of $\Sigma(Z)$ and $\Sigma(Z_0)$ show the same asymptotic behavior on a logarithmic scale, i.e. 
\begin{align}
	\lim_{u \to \infty} u^{-1/k} \log \P(\Sigma(Z)\ge u)=\lim_{u \to \infty} u^{-1/k} \log \P(\Sigma(Z_0)\ge u)=-\tau \gamma, \label{thmZ}
\end{align}
where $\tau$ is the constant from \eqref{iso}. This result is a direct consequence of \cite[Theorem 2]{hug2007asymptotic} and \cite[Lemma 4.1]{hug2007typical}.

Next we determine the asymptotic behavior of $G^{-1}$. Let $y>0$ and note that the fact that $F$ has unbounded support implies that $G^{-1}(ny)^{-1/k}\to \infty$ as $n\to \infty$. Hence, we obtain from \eqref{thmZ} and the definition of $G$ that 
\begin{align}
	\lim_{n \to \infty} \frac{\log n}{G^{-1}(ny)^{1/k}}=\lim_{n \to \infty}	G^{-1} (ny)^{-1/k} \log(1-F(G^{-1}(ny)))\frac{\log (ny)}{\log (1-F(G^{-1}(ny)))}=\tau \gamma. \label{asyyn}
\end{align}

\section{Stopping sets and decay of correlation}
In the proof of Theorem \ref{thmsigma} we need to control the circumradius of the typical cell $Z$. Following \cite[Section 6.3]{penrose2007gaussian}, for $\alpha \in (0,\frac{\pi}{6})$ and $z \in \R^d$ we let $K'_i(z),\,i \in \mathcal I,$ be a finite collection of infinite open cones with apex $z$, angular radius $\frac \alpha 2$ and union $\R^d$.  Let $\omega \in \mathbf{N}_\H$ be locally finite and let $R_i(z,\omega)$ be the minimal $r>0$ such that there is $u \in S^{d-1}\cap K_i(z)$ with $H(u,\langle z,u \rangle+r )\in \omega$ if such $r>0$ exists and set $R_i(z,\omega):=\infty$, otherwise. 

For $\bm{H} \in \omega^{(d+1)}$ in general position let $R(\bm{H},\omega):= (\cos \alpha)^{-1} \max_{1\le i \le I} R_i(z(\bm{H}),\omega \cap \H^{B(\bm{H})})$. If $(\omega-\delta_{\bm{H}})\cap \mathbb{H}_{B(\bm{H})}=\emptyset$, then $R(\bm{H},\omega)$ is an upper bound for the circumradius of $C(\bm{H},\omega)$ and we have
\begin{align}
	C(\bm{H},\omega)=C(\bm{H},\omega_{B(z(\bm{H}),R(\bm{H},\omega))}). \label{relCS}
\end{align}
Next we determine the distribution of $R(\bm{H},\eta)$. For all $u>(\cos \alpha)^{-1} r(\bm{H})$ we obtain
\begin{align}
	\P(R(\bm{H},\eta)>u)  &= \prod_{i \in \mathcal I}\P(R_i(z(\bm{H}),\eta \cap \H^{B(\bm{H})})>u \cos \alpha)\nonumber\\
	&= 1-(1-\mathrm{e}^{-2\gamma (u \cos \alpha -r(\bm{H}))/|\mathcal I|})^{|\mathcal I|}.\label{btR}
\end{align}

In the proof of Theorem \ref{thmsigma} we will use the stopping set $\omega \mapsto \mathbb{H}_{B(z(\bm{H}),R(\bm{H},\omega))}$ for $\bm{H} \in \mathbb{H}^{d+1}$ in general position. In the following lemma we prove that this is indeed a stopping set.

\begin{lemma}\label{lemstop}
	For all $\bm{H} \in \H^{d+1}$ in general position the mapping $\mathcal{S}(\bm{H},\cdot):\mathbf{N}_\H\to \F$ given by 
	\begin{align}
		\mathcal{S}(\bm{H},\omega):=\mathbb{H}_{B(z(\bm{H}),R(\bm{H},\omega))} \label{defS}
	\end{align}
	is a stopping set.
\end{lemma}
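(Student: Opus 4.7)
The plan is to verify the stopping set condition \eqref{eqn:defstop} directly for the set-valued map $\mathcal{S}_{\bm{H}}(\omega):=\mathcal{S}(\bm{H},\omega)=\H_{B(z(\bm{H}),R(\bm{H},\omega))}$. Fix $\bm{H}\in\H^{d+1}$ in general position and a compact $S\subset\H$; two inclusions must be established.

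The key preliminary observation is a simple monotonicity property. For each $i$, the quantity $R_i(z(\bm{H}),\omega\cap\H^{B(\bm{H})})$ is the minimum of distances to $z(\bm{H})$ of hyperplanes in $\omega\cap\H^{B(\bm{H})}$ whose normal lies in the cone $K_i(z(\bm{H}))$, so enlarging $\omega$ only enlarges the set over which this minimum is taken. Thus $\omega_1\subset\omega_2$ implies $R_i(z(\bm{H}),\omega_1\cap\H^{B(\bm{H})})\ge R_i(z(\bm{H}),\omega_2\cap\H^{B(\bm{H})})$ for every $i$, whence $R(\bm{H},\omega_1)\ge R(\bm{H},\omega_2)$ and $\mathcal{S}(\bm{H},\omega_1)\supset\mathcal{S}(\bm{H},\omega_2)$.

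The inclusion $\{\omega:\mathcal{S}_{\bm{H}}(\omega\cap S)\subset S\}\subset\{\omega:\mathcal{S}_{\bm{H}}(\omega)\subset S\}$ is then immediate from monotonicity applied to $\omega\cap S\subset\omega$: $\mathcal{S}_{\bm{H}}(\omega)\subset \mathcal{S}_{\bm{H}}(\omega\cap S)\subset S$. For the reverse inclusion, assume $\mathcal{S}_{\bm{H}}(\omega)\subset S$. For each $i$ with $R_i:=R_i(z(\bm{H}),\omega\cap\H^{B(\bm{H})})<\infty$, local finiteness of $\omega$ ensures that the infimum is attained by some hyperplane $H^*\in\omega\cap\H^{B(\bm{H})}$ lying at distance exactly $R_i$ from $z(\bm{H})$. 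Because $R_i\le(\cos\alpha)\,R(\bm{H},\omega)\le R(\bm{H},\omega)$, the hyperplane $H^*$ meets $B(z(\bm{H}),R(\bm{H},\omega))$, so $H^*\in\mathcal{S}_{\bm{H}}(\omega)\subset S$ and thus $H^*\in\omega\cap S$. Consequently every $R_i$-minimizer survives the restriction to $\omega\cap S$, giving $R_i(z(\bm{H}),(\omega\cap S)\cap\H^{B(\bm{H})})\le R_i$; combined with the reverse monotonicity inequality this yields equality for every $i$, hence $R(\bm{H},\omega\cap S)=R(\bm{H},\omega)$ and $\mathcal{S}_{\bm{H}}(\omega\cap S)=\mathcal{S}_{\bm{H}}(\omega)\subset S$.

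The only conceptual subtlety, which I would not call a serious obstacle, is the interplay between the factor $(\cos\alpha)^{-1}$ appearing in the definition of $R(\bm{H},\omega)$ and the ball $B(z(\bm{H}),R(\bm{H},\omega))$ defining $\mathcal{S}(\bm{H},\omega)$: this factor guarantees that each minimizing hyperplane realizing an $R_i$ lies at distance no greater than $R(\bm{H},\omega)$ from $z(\bm{H})$ and therefore belongs to $\mathcal{S}(\bm{H},\omega)$. Once this is observed, both inclusions reduce to the general fact that a locally determined infimum is unchanged under restriction to a set containing all its minimizers.
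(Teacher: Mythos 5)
Your proof is correct and follows essentially the same strategy as the paper: monotonicity of $\omega\mapsto\mathcal{S}(\bm{H},\omega)$ under set inclusion gives one inclusion, and for the other you use that every hyperplane realizing an $R_i$ already lies in $\mathcal{S}(\bm{H},\omega)\subset S$, so passing to $\omega\cap S$ leaves each $R_i$, hence $R(\bm{H},\cdot)$ and $\mathcal{S}(\bm{H},\cdot)$, unchanged. You are in fact slightly more explicit than the paper, which presents the identity $\mathcal{S}(\bm{H},\omega)=\mathcal{S}(\bm{H},\omega\cap S)$ through a compressed chain of equalities that leaves the role of the $(\cos\alpha)^{-1}$ margin and the attained minimizers implicit.
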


\begin{proof}
	We need to verify \eqref{eqn:defstop}. Let $\bm{H} \in \H^{d+1}$ be in general position, $\omega \in \mathbf{N}_{\mathbb{H}}$ and  $S \subset \mathbb{H}$ be compact. Assume that $\mathcal{S}(\bm{H},\omega) \subset S$ and let $z:=z(\bm{H})$. Then we have
	\begin{align*}
		\mathcal{S}(\bm{H},\omega)&=\mathbb{H}_{B(z(\bm{H}),(\cos \alpha)^{-1}\max_{i\in \mathcal I} \inf\{r>0:\,\exists  u \in K_i(z)\cap S^{d-1}:\,H(u,\langle z,u\rangle+r)\in \omega\})} \cap S\\
		&=\mathbb{H}_{B(z(\bm{H}),(\cos \alpha)^{-1}\max_{i\in \mathcal I} \inf\{r>0:\,\exists  u \in K_i(z)\cap S^{d-1}:\,H(u,\langle z,u\rangle+r)\in \omega \cap S\})}\\
		&=\mathcal{S}(\bm{H},\omega\cap S).
	\end{align*}
	Since $\mathcal{S}(\bm{H},\omega_1)\subset \mathcal{S}(\bm{H},\omega_2)$ for $\omega_1 \ge \omega_2$, $\mathcal{S}(\bm{H},\omega \cap S) \subset S$ implies that $\mathcal{S}(\bm{H},\omega) \subset S$. Hence, the proof is complete.
\end{proof}

In the following we consider the events that two cells (whose circumradii are not too large) are large with respect to $\Sigma$. We will prove that the correlation of these two events decays when the distance of the centers of the two cells becomes large. This will be an important step towards the proof of Theorem \ref{thmsigma}. For $z\in \mathbb{R}^d$, $\alpha \in (0,\frac{\pi}{6})$ and $i \in \mathcal I$ let $N(i):=\{j \in \mathcal I:\, K_j(z) \cap K_i(z)\neq \emptyset\}$ be the set of indices $j \in \mathcal I$ for which the cone $K_j(z)$ intersects $K_i(z)$. Let $R_i'(z,\omega):=\min \{R_j(z,\omega):\,j \in N(i)\}$ (see Figure 1). For $\bm{H} \in \mathbb{H}^{d+1}$ in general position let $R'(\bm{H},\omega):=(\cos 3\alpha)^{-1}\max_{i\in I}R_i'(z(\bm{H}),\omega)$ .
\begin{figure}[bt]
	\centering
	\includegraphics[scale=1.1]{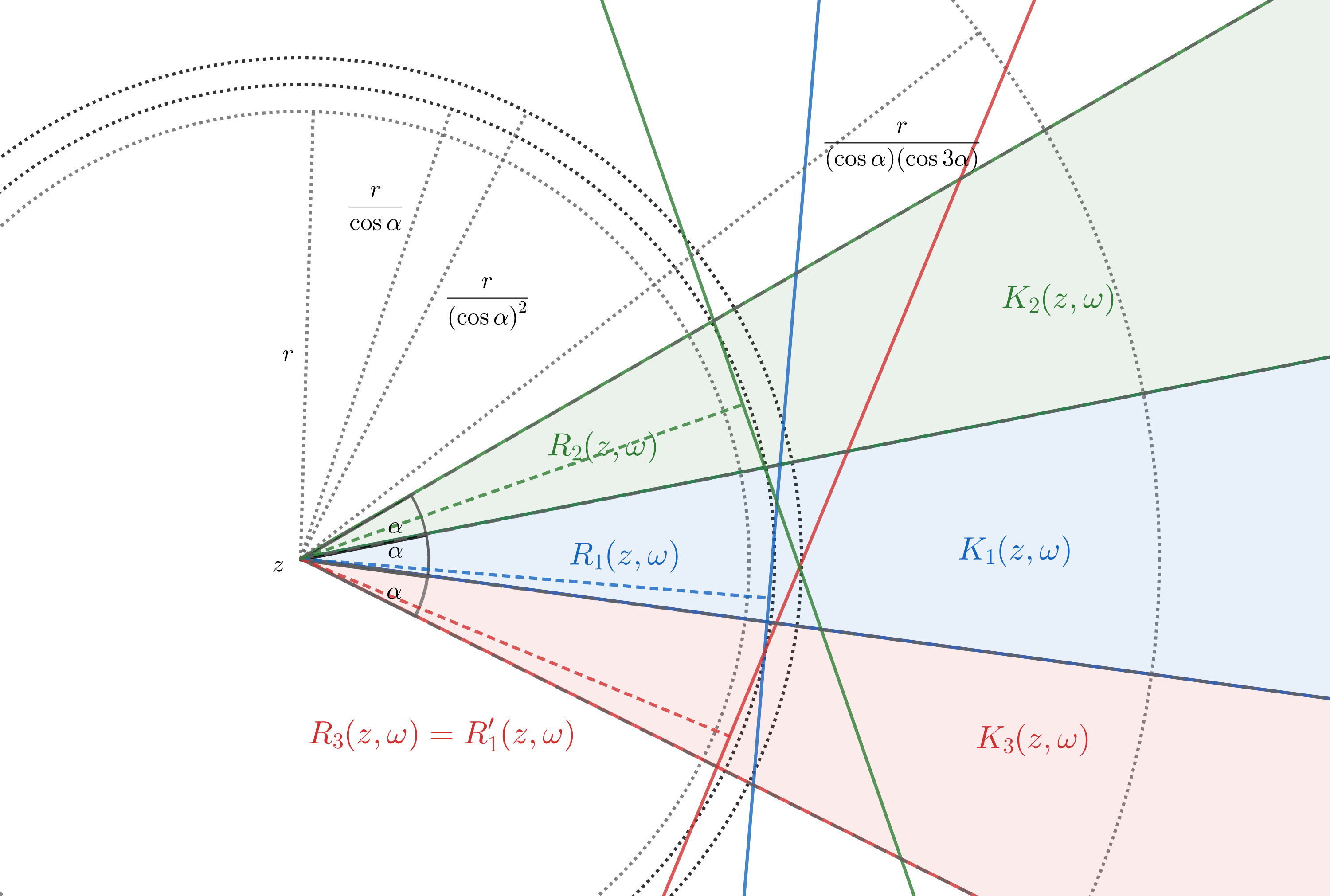}
	\caption{Here we have $N(1)=\{1,2,3\}$. Since $\min\limits_{j \in N(1)} R_j(z,\omega)=R_3(z,\omega)$, we find that $R_3(z,\omega)=R_1'(z,\omega)$.}
\end{figure}

For $r>0$ let $r_\alpha:=\frac{r}{(\cos \alpha)(\cos 3\alpha)}$. Let $I \subsetneq \{1,\dots,d+1\}$, $\bm{H}\in \mathbb{H}^{d+1}$, $\bm{G}\in \mathbb{H}^{d+1-|I|}$ such that $\bm{H}$ and $(\bm{H}_I,\bm{G})$ are in general position and define
$$\mathbb{H}(\bm{H},\bm{G},\alpha):=	\mathbb{H}_{B(z(\bm{H}),r(\bm{H})_\alpha)} \cap \mathbb{H}_{B(z(\bm{H}_I,\bm{G}),r(\bm{H}_I,\bm{G})_\alpha)}.$$Note that if $\frac{\max(r(\bm{H}),r(\bm{H}_I,\bm{G}))}{\|z(\bm{H})-z(\bm{H}_I,\bm{G})\|}$ is small enough, then for every $i \in \mathcal I$ we find some $j \in N(i)$ such that
\begin{align*}
	R_{j}(z(\bm{H}),\omega \cap \mathbb{H}(\bm{H},\bm{G},\alpha)^c)&=R_{j}(z(\bm{H}),\omega),\quad \omega \in \mathbf{N}_{\mathbb{H}}.
\end{align*}
This implies that $R_i'(z(\bm{H}),\omega  \cap \mathbb{H}(\bm{H},\bm{G},\alpha)^c)\le \max_{j \in N(i)} R_j(z(\bm{H}),\omega),\,i \in \mathcal I,$ and, hence,  
\begin{align}
	R'(\bm{H},\omega  \cap \mathbb{H}(\bm{H},\bm{G},\alpha)^c)\le \frac{\cos \alpha}{\cos 3\alpha} R(\bm{H},\omega) \label{eqn:Rbou}
\end{align}
and the statement also holds if $\bm{H}$ is replaced by $(\bm{H}_I,\bm{G})$.

Fix $\alpha \in (0,\frac \pi 6)$ and $u>0$. For $\bm{H} \in \omega^{(d+1)}$ define
\begin{align*}
	h(\bm{H},\omega)&=\mathbf 1 \{(\omega-\delta_{\bm{H}})\cap \mathbb{H}_{B(\bm{H})}=\emptyset\} \1 \{\Sigma(C(\bm{H},\omega))>u\}\, \mathbf 1 \Big\{R(\bm{H},\omega)\le  \frac{r(\bm{H})}{(\cos \alpha)^{2}}\Big\},\\
	h'(\bm{H},\omega)&=\mathbf 1 \{(\omega-\delta_{\bm{H}})\cap \mathbb{H}_{B(\bm{H})}=\emptyset\} \1 \{\Sigma(C(\bm{H},\omega))>u\}\, \mathbf 1\Big \{R'(\bm{H},\omega)\le  \frac{r(\bm{H})}{(\cos \alpha) (\cos 3\alpha)} \Big\}.
\end{align*}
Using \eqref{eqn:Rbou} together with the inequality $\Sigma(C(\bm{H},\omega))\le \Sigma(C(\bm{H},\omega \cap  \mathbb{H}(\bm{H},\bm{G},\alpha)^c))$, we obtain 
\begin{align}
	&h(\bm{H},\omega)\le h'(\bm{H},\omega \cap \mathbb{H}(\bm{H},\bm{G},\alpha)^c). \label{eq:gtildepr}
\end{align} 
In the next lemma we exploit \eqref{eq:gtildepr} to derive an approximative decorrelation inequality for $h$.
\begin{lemma} \label{Le:E2bou} \rm 
	Let $I \subsetneq \{1,\dots,d+1\}$, $\bm{H}\in \mathbb{H}^{d+1}$, $\bm{G}\in \mathbb{H}^{d+1-|I|}$ such that $\bm{H}$ and $(\bm{H}_I,\bm{G})$ are in general position. Let $H \in \mathbb{H}_{B(\bm{H})} \cap \mathbb{H}_{B(\bm{H}_I,\bm{G})}$ and $\mu \in \{0, \delta_H\}$. If $\frac{\max(r(\bm{H}),r(\bm{H}_I,\bm{G}))}{\|z(\bm{H})-z(\bm{H}_I,\bm{G})\|}$ is small enough, we have
	\begin{align*}
		\mathbb{E} h(\bm{H},\eta_{\bm{H},\bm{G}} + \mu)\, h((\bm{H}_I,\bm{G}),\eta_{\bm{H},\bm{G}}+ \mu)	\le 2 \mathbb{E} h'(\bm{H},\eta_{\bm{H}})\, \mathbb{E}h'((\bm{H}_I,\bm{G}),\eta_{\bm{H}_I,\bm{G}}).
	\end{align*}
\end{lemma}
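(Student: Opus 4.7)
The plan is to apply the pointwise bound \eqref{eq:gtildepr} to both factors, exploit the stopping-set localization of $h'$ so that each factor depends on $\eta$ only through its restriction to a ball centered at $z(\bm{H})$ respectively $z(\bm{H}_I,\bm{G})$, and then appeal to independence of $\eta$ on disjoint hyperplane sets. The constant $2$ will come from compensating a conditioning on a rare event.

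First, since $H\in \mathbb{H}_{B(\bm{H})}\cap \mathbb{H}_{B(\bm{H}_I,\bm{G})}\subset \mathbb{H}(\bm{H},\bm{G},\alpha)$ (as $r(\bm{H})_\alpha\ge r(\bm{H})$ for $\alpha\in(0,\pi/6)$, so $B(\bm{H})\subset B(z(\bm{H}),r(\bm{H})_\alpha)$ and analogously for the second inball), we have $\mu\cap \mathbb{H}(\bm{H},\bm{G},\alpha)^c=\emptyset$, and two applications of \eqref{eq:gtildepr} yield
\begin{align*}
h(\bm{H},\eta_{\bm{H},\bm{G}}+\mu)\, h((\bm{H}_I,\bm{G}),\eta_{\bm{H},\bm{G}}+\mu) \le h'(\bm{H},\tilde{\eta})\, h'((\bm{H}_I,\bm{G}),\tilde{\eta}),
\end{align*}
where $\tilde{\eta}:=\eta_{\bm{H},\bm{G}}\cap \mathbb{H}(\bm{H},\bm{G},\alpha)^c$.

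Next, I exploit the localization built into $h'$: the indicator $\{R'(\bm{H},\omega)\le r(\bm{H})_\alpha\}$ forces $C(\bm{H},\omega)\subset B_1:=B(z(\bm{H}),r(\bm{H})_\alpha)$, so $h'(\bm{H},\omega)$ depends only on $\omega\cap \mathbb{H}_{B_1}$ (analogously the second factor depends only on $\omega\cap \mathbb{H}_{B_2}$ with $B_2:=B(z(\bm{H}_I,\bm{G}),r(\bm{H}_I,\bm{G})_\alpha)$). Setting $A_i:=\mathbb{H}_{B_i}\setminus \mathbb{H}_{B_{3-i}}$, the sets $A_1$ and $A_2$ are disjoint, so
\begin{align*}
h'(\bm{H},\tilde{\eta})\, h'((\bm{H}_I,\bm{G}),\tilde{\eta})= h'(\bm{H},\eta_{\bm{H},\bm{G}}\cap A_1)\, h'((\bm{H}_I,\bm{G}),\eta_{\bm{H},\bm{G}}\cap A_2).
\end{align*}
Under the small-ratio assumption, the atoms $\bm{H}\setminus \bm{H}_I$ lie in $A_1$ and the atoms $\bm{G}$ in $A_2$, while the shared atoms $\bm{H}_I\subset \mathbb{H}_{B(\bm{H})}\cap \mathbb{H}_{B(\bm{H}_I,\bm{G})}$ do not affect either cell functional since $C(\cdot)$ uses only hyperplanes in $\mathbb{H}^{B(\cdot)}$. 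By the Poisson disjoint-set independence, the expectation factorizes:
\begin{align*}
\mathbb{E}[h'(\bm{H},\tilde{\eta})\, h'((\bm{H}_I,\bm{G}),\tilde{\eta})] = \mathbb{E} h'(\bm{H},\eta\cap A_1+\delta_{\bm{H}}) \cdot \mathbb{E} h'((\bm{H}_I,\bm{G}),\eta\cap A_2+\delta_{(\bm{H}_I,\bm{G})}).
\end{align*}

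Finally, to convert $\eta\cap A_1+\delta_{\bm{H}}$ to $\eta_{\bm{H}}$, I would condition on the event $E:=\{\eta(\mathbb{H}_{B_1}\cap \mathbb{H}_{B_2})=0\}$, which is independent of $\eta\cap A_1$ by Poisson disjointness. On $E$ we have $\eta\cap A_1=\eta\cap \mathbb{H}_{B_1}$, so by localization $h'(\bm{H},\eta\cap A_1+\delta_{\bm{H}})=h'(\bm{H},\eta_{\bm{H}})$. Hence
\begin{align*}
\P(E)\, \mathbb{E} h'(\bm{H},\eta\cap A_1+\delta_{\bm{H}}) = \mathbb{E}[h'(\bm{H},\eta_{\bm{H}})\mathbf{1}_E] \le \mathbb{E} h'(\bm{H},\eta_{\bm{H}}).
\end{align*}
By Remark \ref{lem2}, $\mu_{d-1}(\mathbb{H}_{B_1}\cap \mathbb{H}_{B_2})$ is small under the ratio assumption, so we may take $\P(E)\ge 1/\sqrt{2}$; combined with the analogous bound for the second factor, this yields the constant $2$. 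The main obstacle is the geometric localization, namely confirming that $R'(\bm{H},\omega)\le r(\bm{H})_\alpha$ truly confines $C(\bm{H},\omega)$ to $B_1$: since $R'$ is a nested max-min over cones and the factor $(\cos 3\alpha)^{-1}$ is tuned to absorb the angular discrepancy between neighboring cones, this rests on a careful use of the cone geometry underlying the definition of $R_i'$; a subsidiary difficulty is checking that, under the ratio assumption, all the facet atoms land in the expected subsets $A_1$ and $A_2$ up to the harmless shared set $\bm{H}_I$.
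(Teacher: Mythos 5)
Your proposal is correct and follows essentially the same route as the paper: apply the pointwise bound \eqref{eq:gtildepr}, exploit the localization of $h'$ to the balls $B_1=B(z(\bm{H}),r(\bm{H})_\alpha)$ and $B_2=B(z(\bm{H}_I,\bm{G}),r(\bm{H}_I,\bm{G})_\alpha)$ together with Poisson independence on disjoint sets to factorize, and then recover $\mathbb{E}h'(\bm{H},\eta_{\bm{H}})$ by conditioning on $\{\eta\cap\mathbb{H}(\bm{H},\bm{G},\alpha)=\emptyset\}$, which has probability at least $1/\sqrt{2}$ under the small-ratio assumption, giving the factor $2$. The only cosmetic difference is that the paper carries out the factorization using the partition $\mathbb{H}_{B_1}$ versus $\mathbb{H}^{B_1}$ of $\mathbb{H}$, while you use the two disjoint (but non-exhaustive) sets $A_1=\mathbb{H}_{B_1}\setminus\mathbb{H}_{B_2}$ and $A_2=\mathbb{H}_{B_2}\setminus\mathbb{H}_{B_1}$; both yield the same independence conclusion, and the localization assertion you flag at the end is precisely what Section 7's definitions of $R$, $R'$ and the stopping set $\mathcal S$ are designed to ensure, so it is presupposed rather than a gap.
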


\begin{proof} 
	We apply \eqref{eq:gtildepr} with $\omega:=\eta_{\bm{H},\bm{G}}+\mu$ and obtain
	\begin{align}
		&\mathbb{E} h(\bm{H},\eta_{\bm{H},\bm{G}} + \mu)\, h((\bm{H}_I,\bm{G}),\eta_{\bm{H},\bm{G}}+ \mu)\nonumber\\
		&\quad \le 	\mathbb{E}	h'(\bm{H},\eta_{\bm{H}}\cap \mathbb{H}(\bm{H},\bm{G},\alpha)^c)\, h'((\bm{H}_I,\bm{G}),\eta_{\bm{H}_I,\bm{G}} \cap \mathbb{H}(\bm{H},\bm{G},\alpha)^c).\label{eq:gtildepr2}
	\end{align}
	Note that $h'(\bm{H},\eta_{\bm{H}} \cap \mathbb{H}(\bm{H},\bm{G},\alpha)^c)$ is measurable with respect to $\eta \cap \mathbb{H}_{B(z(\bm{H}),r(\bm{H})_\alpha)}$ and that $h'((\bm{H}_I,\bm{G}),\eta_{\bm{H}_I,\bm{G}}  \cap \mathbb{H}(\bm{H},\bm{G},\alpha)^c)$ is measurable with respect to $\eta \cap \mathbb{H}^{B(z(\bm{H}),r(\bm{H})_\alpha)}$. Hence, by independence of the processes $\eta \cap \mathbb{H}_{B(z(\bm{H}),r(\bm{H})_\alpha)}$ and  $\eta \cap \mathbb{H}^{B(z(\bm{H}),r(\bm{H})_\alpha)}$, \eqref{eq:gtildepr2} factorizes into
	\begin{align*}
		\mathbb{E}	h'(\bm{H},\eta_{\bm{H}}\cap \mathbb{H}(\bm{H},\bm{G},\alpha)^c)\, \mathbb{E}h'(\bm{H}_I,\bm{G},\eta_{\bm{H}_I,\bm{G}} \cap \mathbb{H}(\bm{H},\bm{G},\alpha)^c).
	\end{align*} 
	Thus it remains to show that $ \mathbb{E} h'(\bm{H},\eta_{\bm{H}} \cap \mathbb{H}(\bm{H},\bm{G},\alpha)^c)\le \sqrt 2 \,\mathbb{E} h'(\bm{H},\eta_{\bm{H}} )$ to conclude the proof. Let $p:=\mathbb{P}(\eta \cap \mathbb{H}(\bm{H},\bm{G},\alpha)=\emptyset)=\exp(-\gamma \mu_{d-1}(\mathbb{H}(\bm{H},\bm{G},\alpha)))$. Since the processes $\eta \cap \mathbb{H}(\bm{H},\bm{G},\alpha)$ and  $\eta \cap \mathbb{H}(\bm{H},\bm{G},\alpha)^c$	are independent, we obtain
	\begin{align*}
		\mathbb{E} h'(\bm{H},\eta_{\bm{H}} \cap \mathbb{H}(\bm{H},\bm{G},\alpha)^c)&=	p^{-1} \mathbb{E} h'(\bm{H},\eta_{\bm{H}} \cap \mathbb{H}(\bm{H},\bm{G},\alpha)^c) \mathbf 1\{\eta \cap \mathbb{H}(\bm{H},\bm{G},\alpha)=\emptyset\}\\
		&=p^{-1} \mathbb{E} h'(\bm{H},\eta_{\bm{H}} ) \mathbf 1\{\eta \cap \mathbb{H}(\bm{H},\bm{G},\alpha)=\emptyset\}\\
		&\le p^{-1} \mathbb{E} h'(\bm{H},\eta_{\bm{H}} ) .
	\end{align*}
	Since $p^{-1}\le \sqrt 2$ if $\frac{\max(r(\bm{H}),r(\bm{H}_I,\bm{G}))}{\|z(\bm{H})-z(\bm{H}_I,\bm{G})\|}$ is small enough, the proof of the lemma is complete.
\end{proof}

\section{Proof of Theorem \ref{thmsigma}}

As a last intermediate result before the proof of Theorem \ref{thmsigma}, we show in the following lemma how to bound the expected number of pairs of large cells whose centers are within a distance less than or equal to some $D>0$. The lemma will be used together with \eqref{Ken}  and Lemma \ref{Le:le1} to exclude local clustering of large cells. It can be understood as a counterpart to Lemma \ref{Le:E2bou} that excludes asymptotic long-range dependencies.

\begin{lemma}\label{Le:exc}
	Let $D>0$, $W \subset \mathbb{R}^d$ be compact, $I \subsetneq \{1,\dots,d+1\}$, $a \in (0,1)$, $R>0$ and $y>0$. We have
	\begin{align*}
		&\frac{\gamma^{d+1}}{(d+1)!}\int_{\H^{d+1}} \int_{\H^{d+1-|I|}} \mathbf{1}\{z(\bm{H}) \in W\}\,\mathbf{1}\{ \|z(\bm{H})-z(\bm{H}_I,\bm{G})\|\le D\}  \nonumber\\
		&\,\,  \times \mathbb{P}(\eta \cap \H_{B(\bm{H}) \cup B(\bm{H}_I,\bm{G})}=\emptyset ,\,\Sigma(C(\bm{H},\eta_{\bm{H},\bm{G}}))>y,\,\Sigma(C((\bm{H}_I,\bm{G}),\eta_{\bm{H},\bm{G}}))>y)\nonumber\\
		&\,\, \times \mathbf{1}\{\min\{r(\bm{H}),r(\bm{H}_I,\bm{G})\}\le aR\}\mathbf{1}\{\max\{r(\bm{H}),r(\bm{H}_I,\bm{G})\}\le R\} \,\mu_{d-1}^{d+1-|I|}(\mathrm{d}\bm{G})\,\mu_{d-1}^{d+1}(\mathrm{d}\bm{H})\nonumber\\
		&\le 	2^{d+2-|I|} \gamma^{(d)} (D+R)^{d+1-|I|} \lambda_d(W) \mathbb{P}\Big(\Sigma(Z)>y,\vt(Z) \ge \frac{\tau y^{1/k}-2aR}{\tau y^{1/k}+2aR} \Big),
	\end{align*}
	where for $I=\{i_1,\dots,i_m\}$ we set $\bm{H}_I=(H_{i_1},\dots,H_{i_m})$ and $\tau$ is the constant from \eqref{iso}.
\end{lemma}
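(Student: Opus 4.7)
The proof combines a geometric estimate on the deviation function, a monotonicity argument to isolate a single cell, and the Palm identity \eqref{eqn:typZ} (equivalently the multivariate Mecke formula) to reduce to the typical cell $Z$.

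\emph{Geometric step.} Suppose $C\in\mathcal{K}^d$ has inradius at most $aR$ and satisfies $\Sigma(C)>y$. The isoperimetric inequality \eqref{iso} gives $\Phi(C)\ge\tau\Sigma(C)^{1/k}>\tau y^{1/k}$. Translating $C$ so that its circumcenter sits at the origin, one has $h(C,u)\le R_c(C)$ for all $u\in S^{d-1}$, where $R_c(C)$ denotes the circumradius; together with $\int_{S^{d-1}}\langle z,u\rangle\,\sigma_{d-1}(\mathrm{d}u)=0$, this yields
\[
\Phi(C)=2\int_{S^{d-1}}h(C,u)\,\sigma_{d-1}(\mathrm{d}u)\le 2R_c(C),
\]
hence $R_c(C)>\tau y^{1/k}/2$. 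Since $(r',R')\mapsto(R'-r')/(R'+r')$ is decreasing in $r'$ and increasing in $R'$, every admissible sandwich $r'B^d+z\subset C\subset R'B^d+z$ in the definition \eqref{defvt} satisfies $r'\le aR$ and $R'\ge R_c(C)$, giving
\[
\vt(C)\ge \frac{\tau y^{1/k}/2-aR}{\tau y^{1/k}/2+aR}=\frac{\tau y^{1/k}-2aR}{\tau y^{1/k}+2aR}=:\varepsilon.
\]

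\emph{Reduction to one cell.} Bound $\mathbf{1}\{\min\{r(\bm{H}),r(\bm{H}_I,\bm{G})\}\le aR\}\le \mathbf{1}\{r(\bm{H})\le aR\}+\mathbf{1}\{r(\bm{H}_I,\bm{G})\le aR\}$ and treat the first summand explicitly. On the event $\{r(\bm{H})\le aR,\,\Sigma(C(\bm{H},\eta_{\bm{H},\bm{G}}))>y\}$, the inclusion $C(\bm{H},\eta_{\bm{H},\bm{G}})\subset C(\bm{H},\eta_{\bm{H}})$ preserves the lower bound on $\Sigma$, and the inball of $C(\bm{H},\eta_{\bm{H}})$ is still $B(\bm{H})$, so the geometric step yields $\vt(C(\bm{H},\eta_{\bm{H}}))\ge\varepsilon$. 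Dropping the remaining conditions, the probability in the integrand is bounded by
\[
\mathbb{P}\bigl(\eta\cap\mathbb{H}_{B(\bm{H})}=\emptyset,\;\Sigma(C(\bm{H},\eta_{\bm{H}}))>y,\;\vt(C(\bm{H},\eta_{\bm{H}}))\ge\varepsilon\bigr),
\]
a function only of $\bm{H}$. The proximity $\|z(\bm{H})-z(\bm{H}_I,\bm{G})\|\le D$ together with $r(\bm{H}_I,\bm{G})\le R$ forces $B(\bm{H}_I,\bm{G})\subset B(z(\bm{H}),D+R)$, so every component of $\bm{G}$ lies in $\mathbb{H}_{B(z(\bm{H}),D+R)}$, and the $\bm{G}$-integral is controlled by $\mu_{d-1}(\mathbb{H}_{B(z(\bm{H}),D+R)})^{d+1-|I|}=(2(D+R))^{d+1-|I|}$.

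\emph{Palm identity.} The residual $\bm{H}$-integral is handled by the Palm representation \eqref{eqn:typZ}: for the translation invariant set $A=\{K\in\mathcal{K}^d:\Sigma(K)>y,\,\vt(K)\ge\varepsilon\}$, the multivariate Mecke equation gives
\[
\frac{\gamma^{d+1}}{(d+1)!}\int_{\mathbb{H}^{d+1}}\mathbf{1}\{z(\bm{H})\in W\}\,\mathbb{P}\bigl(\eta\cap\mathbb{H}_{B(\bm{H})}=\emptyset,\;C(\bm{H},\eta_{\bm{H}})-z(\bm{H})\in A\bigr)\,\mu_{d-1}^{d+1}(\mathrm{d}\bm{H})=\gamma^{(d)}\lambda_d(W)\,\mathbb{P}(Z\in A).
\]
Each summand from the union bound therefore contributes at most $\gamma^{(d)}\lambda_d(W)\,(2(D+R))^{d+1-|I|}\,\mathbb{P}(\Sigma(Z)>y,\vt(Z)\ge\varepsilon)$, and adding the two copies produces the factor $2\cdot 2^{d+1-|I|}=2^{d+2-|I|}$ stated in the lemma. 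The main technicality lies in the symmetric summand, where the constraint $\mathbf{1}\{z(\bm{H})\in W\}$ singles out the first cell: it is handled by applying the Blaschke-Petkantschin representation \cite[Theorem 7.3.2]{schneider2008stochastic} to decompose the $\bm{H}$-integration into its inball center and the defining normals, thereby decoupling $z(\bm{H})\in W$ from the cell $(\bm{H}_I,\bm{G})$ before invoking the Mecke identity for the latter.
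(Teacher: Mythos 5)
Your geometric step (using $\Phi(C)\le 2R_c(C)$ to force a large deviation $\vt$) and your treatment of the first summand match the paper's argument: both use the monotonicity $\Sigma(C(\bm{H},\eta_{\bm{H},\bm{G}}))\le\Sigma(C(\bm{H},\eta_{\bm{H}}))$, the triangle inequality to place every $G_i$ in $\mathbb{H}_{B(z(\bm{H}),D+R)}$ so that the $\bm{G}$-integral is at most $(2(D+R))^{d+1-|I|}$, and then the Mecke/typical-cell identity \eqref{eqn:typZ} for the residual $\bm{H}$-integral. The only cosmetic difference is that the paper passes through $\vt(K)\ge(\Phi(K)-2r(K))/(\Phi(K)+2r(K))$ rather than through the circumradius; the two arguments are equivalent since $\Phi(K)\le 2R$ for any circumscribing ball $RB^d+z$.

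The weak point is exactly where you locate it, and neither your sketch nor the paper closes it cleanly. The paper disposes of the second case with ``we assume that $r(\bm{H}_I,\bm{G})\ge r(\bm{H})$ (at the cost of a factor $2$)'', but the integrand is not symmetric under exchanging the two cells because only $\mathbf{1}\{z(\bm{H})\in W\}$ appears, so the reduction to the first case is not automatic. Your suggested remedy — invoking Blaschke--Petkantschin to free up $z(\bm{H})$ before applying Mecke to $(\bm{H}_I,\bm{G})$ — does not work as described: after parametrizing $\bm{H}$ by $(z(\bm{H}),r(\bm{H}),\bm{u})$, the hyperplanes $\bm{H}_I$ (and hence the second cell) still depend on $z(\bm{H})$, and the Mecke computation for $(\bm{H}_I,\bm{G})$ requires a spatial localization of $z(\bm{H}_I,\bm{G})$ that this decomposition does not supply. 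The concrete fix is to run the same argument with the roles swapped, using $\mathbf{1}\{z(\bm{H})\in W\}\,\mathbf{1}\{\|z(\bm{H})-z(\bm{H}_I,\bm{G})\|\le D\}\le\mathbf{1}\{z(\bm{H}_I,\bm{G})\in W+DB^d\}$ together with $H_i\in\mathbb{H}_{B(z(\bm{H}_I,\bm{G}),D+R)}$ for $i\notin I$; this yields the stated bound with $\lambda_d(W+DB^d)$ in place of $\lambda_d(W)$. That is formally weaker than the lemma as printed, but it is what the argument actually delivers, and it is harmless for the intended application in the proof of Theorem \ref{thmsigma}, where $W=n^{1/d}W_0$ and $D=(\log n)^2$, so the replacement only costs a bounded factor.
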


\begin{proof}	
	We assume that $r(\bm{H}_I,\bm{G}) \ge r(\bm{H})$ (at the cost of a factor 2). By the triangle inequality, $ \|z(\bm{H})-z(\bm{H}_I,\bm{G})\|\le D$ and $r(\bm{H}_I,\bm{G})\le R$ imply that $d(G_i,z(\bm{H}))\le D+R$ for $i \in \{1,\dots,d+1\}\setminus I$. Using that $\mu_{d-1}(\mathbb{H}_{B(z(\bm{H}),D+R)}) \le 2(D+R)$ and that $\Sigma(C(\bm{H},\eta_{\bm{H},\bm{G}})) \le \Sigma(C(\bm{H},\eta_{\bm{H}}))$, we obtain for the left-hand side in the statement of the lemma
	\begin{align*}
		\frac{2^{d+2-|I|}  (D+R)^{d+1-|I|}\gamma^{d+1}}{(d+1)!}\int_{\H^{d+1}} & \mathbf{1}\{z(\bm{H}) \in W\} \mathbb{P}(\eta \cap \H_{B(\bm{H})}=\emptyset,\,\Sigma(C(\bm{H},\eta_{\bm{H}}))>y)\\
		&\times\,\mathbf{1}\{r(\bm{H})\le aR\}\,\mu_{d-1}^{d+1}(\mathrm{d}\bm{H}).
	\end{align*}
	Now we use that the conditions $r(\bm{H})\le aR$ and $\Sigma(\bm{H},\eta)>y$ imply by the definition of $\Phi$ (see \eqref{defphi}) and the isoperimetric inequality \eqref{iso} that
	\begin{align*}
		\min \left\{\frac{R}{r}:\, rB^d+z \subset K\subset RB^d+z,\,r,R>0,\,z \in \mathbb{R}^d\right\} \ge \frac{\Phi(K)}{2r(K)} \ge \frac{\tau \Sigma(K)^{1/k}}{2r(K)},\quad K \in \mathcal{K}^d,
	\end{align*}
	where $r(K)$ is the inradius of $K$. Hence, for $\Sigma(C(\bm{H},\eta_{\bm{H}}))>y$ and $r(\bm{H})\le aR$ we have that $\vt(C(\bm{H},\eta_{\bm{H}}))\ge \frac{\tau y^{1/k}-2aR}{\tau y^{1/k}+2aR}$, where $\vt$ is the deviation function given at \eqref{defvt}. This yields the bound
	\begin{align*}
		&\frac{2^{d+2-|I|}  (D+R)^{d+1-|I|}\gamma^{d+1}}{(d+1)!}\int_{\H^{d+1}} \mathbf{1}\{z(\bm{H}) \in W\}\\
		&\qquad \quad \times\,  \mathbb{P}\Big(\eta \cap \H_{B(\bm{H})}=\emptyset,\,\Sigma(C(\bm{H},\eta_{\bm{H}}))>y,\vt(C(\bm{H},\eta_{\bm{H}}))\ge \frac{\tau y^{1/k}-2aR}{\tau y^{1/k}+2aR}\Big)\,\mu_{d-1}^{d+1}(\mathrm{d}\bm{H}).
	\end{align*}
	From here the assertion follows from the definition of the typical cell $Z$ (see \eqref{eqn:typZ}).
\end{proof}

\begin{proof}[Proof of Theorem \ref{thmsigma}]
	Let $W \subset \mathbb{R}^d$ be compact and $c>0$. Using \eqref{parZ} we find for all Borel sets $A \subset W$, $y \ge c$ and $nc \ge G^{-1}(\gamma^{-1/k})$ that
	\begin{align}
		&\E \xi_{n}(A \times (y,\infty))=n \gamma^{(d)} \lambda_d(A)\P(n^{-1}G(\Sigma(Z))>y) = \gamma^{(d)} \lambda_d(A) y^{-1},\label{intmeasgen}
	\end{align}
	where $Z$ is the typical cell in $\textsf{m}$. Hence, for $nc \ge G^{-1}(\gamma^{-1/k})$ the intensity measure of the restricted process $\xi_n \cap (W \times (c,\infty))$ is given by $\gamma^{(d)} \lambda_d(\cdot \cap W) \otimes \psi(\cdot \cap (c,\infty))$, where $\psi((a,b))=a^{-1}-b^{-1}$ for $0<a<b<\infty$ as defined in Section 2.
	
	We apply Theorem \ref{PPA} with
	\begin{align*}
		&g(\bm{H},\omega)=\mathbf{1}\{n^{-1/d}z(\bm{H}) \in W\} \mathbf{1}\{(\omega-\delta_{\bm{H}}) \cap \mathbb{H}_{B(\bm{H})}=\emptyset\}\mathbf{1}\{n^{-1}G(\Sigma(C(\bm{H},\omega))) >c\},\\
		&f(\bm{H},\omega)=(n^{-1/d}z(\bm{H}),n^{-1}G(\Sigma(C(\bm{H},\omega)))),\quad \omega \in \mathbf{N}_{\mathbb{H}},\, \bm{H} \in \omega^{(d+1)},
	\end{align*}
	the stopping set $\mathcal{S}$ from \eqref{defS} and $S_{{\bm{H}}}:=\mathbb{H}_{B(z(\bm{H}),(\cos \alpha)^{-2}r(\bm{H}))}$, where the value of $\alpha \in (0,\frac {\pi}{6})$ will be specified later. Note that \eqref{fgass} holds by \eqref{relCS}. Let $\nu$ be a Poisson process on $\R^d \times (0,\infty)$ with intensity measure $\gamma^{(d)} \lambda_d \otimes \psi$. Using Remark \ref{rem:Poi}(a), we find
	\begin{align*}
		\mathbf{d_{KR}}(\xi_n \cap (W \times (c,\infty)),\nu \cap (W \times (c,\infty)))\le E_1+2E_2+E_4+E_5+E_6
	\end{align*}
	with
	\begin{align*}
		E_1&= \frac{2\gamma^{d+1}}{(d+1)!} \int_{\mathbb{H}^{d+1}} \E g(\bm{H},\eta_{\bm{H}}) \mathbf{1}\{\mathcal{S}(\bm{H},\eta_{\bm{H}}) \not \subset S_{\bm{H}}\}\,\mu_{d-1}^{d+1} (\mathrm{d}\bm{H}),\\
		E_2&=\frac{2\gamma^{2d+2}}{((d+1)!)^2} \int_{\mathbb{H}^{d+1}} \int_{\mathbb{H}^{d+1}} \E \mathbf{1}\{\eta_{(\bm{H},\bm{G})} \cap S_{\bm{H}} \cap S_{\bm{G}} \neq \emptyset\} \tilde{g}(\bm{H},\eta_{\bm{H}}) \E \tilde{g}(\bm{G},\eta_{\bm{G}}) \nonumber\\
		& \quad \quad \quad \quad \times \mu_{d-1}^{d+1}(\mathrm{d}\bm{G}) \mu_{d-1}^{d+1}(\mathrm{d}\bm{H}),\\
		E_4&=\frac{2\gamma^{2d+2}}{((d+1)!)^2} \int_{\mathbb{H}^{d+1}} \int_{\mathbb{H}^{d+1}} \E \mathbf{1}\{\eta_{(\bm{H},\bm{G})}\cap S_{\bm{H}} \cap S_{\bm{G}} \neq \emptyset\} \tilde{g}(\bm{H},\eta_{(\bm{H},\bm{G})}) \tilde{g}(\bm{G},\eta_{(\bm{H},\bm{G})})\nonumber\\
		& \quad \quad \quad \quad \times \mathbf{1}\{r(\bm{H})+r(\bm{G})\le\|z(\bm{H})-z(\bm{G})\|\}\, \mu_{d-1}^{d+1}(\mathrm{d}\bm{G}) \mu_{d-1}^{d+1}(\mathrm{d}\bm{H}),\\
		E_5&=\frac{2\gamma^{2d+2}}{((d+1)!)^2} \int_{\mathbb{H}^{d+1}} \int_{\mathbb{H}^{d+1}}  \E \mathbf{1}\{\eta_{(\bm{H},\bm{G})} \cap S_{\bm{H}} \cap S_{\bm{G}} = \emptyset\} \tilde{g}(\bm{H},\eta_{(\bm{H},\bm{G})}) \tilde{g}(\bm{G},\eta_{(\bm{H},\bm{G})})\\
		& \quad \quad \quad \quad \times \, \P(\eta\cap S_{\bm{H}} \cap S_{\bm{G}} \neq \emptyset)\mathbf{1}\{r(\bm{H})+r(\bm{G})\le\|z(\bm{H})-z(\bm{G})\|\} \,\mu_{d-1}^{d+1}(\mathrm{d}\bm{G}) \mu_{d-1}^{d+1}(\mathrm{d}\bm{H}),\\
		E_6&=\frac{2\gamma^{d+1}}{(d+1)!} \sum_{\emptyset \subsetneq I \subsetneq \{1,\dots,d+1\}} \frac{\gamma^{d+1-|I|}}{(d+1-|I|)!} \int_{\mathbb{H}^{d+1}} \int_{\mathbb{H}^{d+1-|I|}} \E  \tilde{g}(\bm{H},\eta_{(\bm{H},\bm{G})}) \tilde{g}(\bm{G},\eta_{(\bm{H},\bm{G})})\nonumber\\
		& \quad \quad \quad \quad \times \mathbf{1}\{r(\bm{H})+r(\bm{H}_I,\bm{G})\le\|z(\bm{H})-z(\bm{H}_I,\bm{G})\|\}\, \mu_{d-1}^{d+1-|I|}(\mathrm{d}\bm{G}) \mu_{d-1}^{d+1}(\mathrm{d}\bm{H}),
	\end{align*}
	where for $I=\{i_1,\dots,i_m\}$ we set $\bm{H}_I=(H_{i_1},\dots,H_{i_m})$, $(\bm{H}_I,\bm{G})=(H_{i_1},\dots,H_{i_m},\bm{G})$ and write $\tilde{g}(\bm{H},\omega):=g(\bm{H},\omega) \mathbf{1}\{\mathcal{S}(\bm{H},\omega) \subset S_{\bm{H}}\}$. Now we bound $E_1, E_2, E_4, E_5, E_6$ separately.
	
	{\em The estimate of $E_1$.} Since $\mathcal{S}(\bm{H},\omega)=\mathcal{S}(\bm{H},\omega \cap \H^{B(\bm{H})})$, we find from independence of the processes $\eta \cap \H_{B(\bm{H})}$ and $\eta \cap \H^{B(\bm{H})}$ that $E_1$ is given by
	\begin{align*}
		\frac{2\gamma^{d+1}}{(d+1)!}&\int_{\H^{d+1}} \P(\mathcal{S}(\bm{H},\eta) \not \subset S_{\bm{H}}, n^{-1}G(\Sigma(C(\bm{H},\eta_{\bm{H}})))>c)\,
		\mathrm{e}^{-2\gamma r(\bm{H})}\, \mathbf{1}\{z(\bm{H})\in W_n\} \\
		&\quad \times\,\mu_{d-1}^{d+1}(\mathrm{d}\bm{H}).
	\end{align*}
	Note that $\mathcal{S}(\bm{H},\eta) \not \subset S_{\bm{H}}$ implies by the definition of the stopping set $\mathcal{S}$ (see \eqref{defS}) that $\max_{i \in I} R_i(z(\bm{H}),\eta \cap \H^{B(\bm{H})})>(\cos \alpha)^{-1}r(\bm{H})$. This yields that $\vt(C(\bm{H},\eta_{\bm{H}}))>\frac {1-\cos \alpha}{1+\cos \alpha}$, where $\vt$ is given in \eqref{defvt}. Hence, we obtain that $E_1$ is bounded from above by
	\begin{align}
		&n\gamma^{(d)} \lambda_d(W)  \P(n^{-1}G(\Sigma(Z))>c) \P\Big(\vt(Z)> \frac {1-\cos \alpha}{1+\cos \alpha} \Big\vert \, n^{-1}G(\Sigma(Z))>c\Big)\nonumber\\
		& \quad\le c_1 \exp\left\{-c_0s\Big(\frac {1-\cos \alpha}{1+\cos \alpha}\Big)G^{-1}(nc)^{1/k}\gamma\right\},\label{E1b}
	\end{align}
	where $Z$ is the typical cell with distribution given in \eqref{Ken} and the inequality follows from \eqref{parZ} and \eqref{Ken}. Since $\tau \gamma G^{-1}(nc)^{1/k} \sim \log n$ by \eqref{asyyn} and $s(\varepsilon)>0$ for $\varepsilon>0$, $E_1$ is bounded by $c_1 n^{-b}$ for some $b>0$.

	{\em The estimate of $E_2$.} We split the event $\{\eta_{(\bm{H},\bm{G})} \cap S_{\bm{H}} \cap S_{\bm{G}} \neq \emptyset\}$ from the first indicator in $E_2$ into $\{\delta_{(\bm{H},\bm{G})} \cap S_{\bm{H}} \cap S_{\bm{G}} \neq \emptyset\}$ and $\{\eta \cap S_{\bm{H}} \cap S_{\bm{G}} \neq \emptyset\}$. This allows us to bound $E_2$ by
	\begin{align}
		&\frac{2\gamma^{2d+2}}{((d+1)!)^2}\int_{\H^{d+1}} \int_{\H^{d+1}} \mathbf{1}\{\delta_{(\bm{H},\bm{G})} \cap S_{\bm{H}} \cap S_{\bm{G}} \neq \emptyset\} \E \tilde g(\bm{H},\eta_{\bm{H}}) \E \tilde g(\bm{G},\eta_{\bm{G}})  \nonumber\\ 
		&\qquad \qquad \qquad \qquad  \qquad\times\,\mu_{d-1}^{d+1}(\mathrm{d}\bm{G})\,\mu_{d-1}^{d+1}(\mathrm{d}\bm{H})\label{E21}\\
		&\,+\frac{2\gamma^{2d+2}}{((d+1)!)^2}\int_{\H^{d+1}}\int_{\H^{d+1}} \E \mathbf{1}\{\eta \cap S_{\bm{H}} \cap S_{\bm{G}} \neq \emptyset\} \tilde g(\bm{H},\eta_{\bm{H}})\, \E \tilde g(\bm{G},\eta_{\bm{G}})\nonumber\\ 
		&\qquad \qquad \qquad \qquad  \qquad \times\,\mu_{d-1}^{d+1}(\mathrm{d}\bm{G})\,\mu_{d-1}^{d+1}(\mathrm{d}\bm{H}).\label{E22}
	\end{align}
	Analogously to the bound of $E_2$ in the proof of Theorem \ref{thmr}, we find from Lemma \ref{bplem} with $\ell=1$ (note that $\nabla_1(v)=1$ for all $v \in S^{d-1}$) that \eqref{E21} is bounded by
	\begin{align}
		&\frac{2^{d+2} \gamma^{2d+2} }{(d+1)! (d+1)} \int_{\H^{d+1}} \int_{\H_{B(\bm{H})}} \int_{S_{G_1^\perp}} \int_{(S^{d-1})^{d}} \int_{0}^\infty  \int_{W_n\cap(G_1-su_1) } \E \tilde g(\bm{H},\eta_{\bm{H}}) \E \tilde g(\bm{G},\eta_{\bm{G}})  \nonumber\\
		&\quad\times\,  \Delta_d(u_{1:d+1}) \mathbf{1}_\mathsf{P}(u_{1:d+1})\lambda_{d-1}(\mathrm{d}w)\, \mathrm{d}s \,\sigma_{d-1}^{d}(\mathrm{d}u_{2:d+1})\,\sigma_0(\mathrm{d} u_1)\,\mu_{d-1}(\mathrm{d}G_1)\,\mu_{d-1}^{d+1}(\mathrm{d}\bm{H}),\label{E22lem6}
	\end{align}
	where $\bm{G}:=(G_1,H(u_2,\langle u_2,w \rangle +s),\dots,H(u_{d+1},\langle u_{d+1},w \rangle +s))$. Since $\mu_{d-1}(\mathbb{H}_{B(\bm{H})})= 2r(\bm{H})$ (see \eqref{repmu}), we have
	\begin{align*}
		\int_{\mathbb{H}_{B(\bm{H})}} \int_{S_{G_1^\perp}} \mathbf 1\{u \in \cdot\} \,\sigma_0(\mathrm{d}u)\,\mu_{d-1}(\mathrm{d}G_1)=2r(\bm{H})\sigma_{d-1}(\cdot).
	\end{align*}
	Hence, \eqref{E22lem6} is given by
	\begin{align*}
		\frac{2^{d+3}\gamma^{2d+2}}{(d+1)! (d+1)} &\int_{\H^{d+1}}  \int_{(S^{d-1})^{d+1}} \int_{0}^\infty  \int_{W_n\cap(G_1-s u_1) }r(\bm{H}) \, \E \tilde g(\bm{H},\eta_{\bm{H}}) \E \tilde g(\bm{G},\eta_{\bm{G}})  \nonumber\\
		&\quad\times\,  \Delta_d(u_{1:d+1}) \mathbf{1}_\mathsf{P}(u_{1:d+1})\lambda_{d-1}(\mathrm{d}w)\, \mathrm{d}s \,\sigma_{d-1}^{d+1}(\mathrm{d}u_{1:d+1}) \mu_{d-1}^{d+1}(\mathrm{d}\bm{H}).
	\end{align*}
	Using that $\lambda_{d-1}(W_n\cap(G_1-su_1)) \le n^{(d-1)/d} \text{diam}(W)^{d-1} $, we obtain by definition of the typical cell $Z$ the bound
	\begin{align}
		c_2n^{\frac{d-1}{d}} \P(n^{-1}G(\Sigma(Z))>c) \int_{\H^{d+1}} r(\bm{H}) \E \tilde g(\bm{H},\eta_{\bm{H}})  \, \mathrm{e}^{-2\gamma r(\bm{H})} \mu_{d-1}^{d+1}(\mathrm{d}\bm{H}). \label{eqn:E2halvtyp}
	\end{align}
	Next we distinguish by the value of $r(\bm{H})$. If $r(\bm{H})>\frac{\log n}{\gamma}$, the integral in \eqref{eqn:E2halvtyp} is by \cite[Theorem 7.3.2]{schneider2008stochastic} of order $n^{-1}$. If $r(\bm{H})\le\frac{\log n}{\gamma}$, it is by \eqref{intmeasgen} of order $\log n$. Hence, \eqref{E21} is bounded by $c_3 n^{-1/d} \log n$. 
	
	To bound \eqref{E22}, we consider the first probability in the integrand and find by the Mecke equation (see \cite[Theorem 4.1]{last2017lectures}) that for fixed $\bm{H} \in \mathbb{H}^{d+1}$,
	\begin{align}
		\E \mathbf{1}\{\eta \cap S_{\bm{H}} \cap S_{\bm{G}} \neq \emptyset\} \tilde g(\bm{H},\eta_{\bm{H}})&\le \E \sum_{H \in \eta} \mathbf{1}\{H \in S_{\bm{H}} \cap S_{\bm{G}}\} \tilde g(\bm{H},\eta_{\bm{H}}) \nonumber\\
		&= \gamma\int_{S_{\bm{H}} \cap S_{\bm{G}}} \E \tilde g(\bm{H},\eta_{\bm{H}}+\delta_H)\,\mu_{d-1}(\mathrm{d}H). \label{meckebou}
	\end{align}
	Since $g(\bm{H},\omega_1)\ge g(\bm{H},\omega_2)\ge \tilde g(\bm{H},\omega_2)$ for $\omega_1 \subset \omega_2$, we have $g(\bm{H},\eta_{\bm{H}}) \ge \tilde g(\bm{H},\eta_{\bm{H}}+\delta_H)$ a.s. Hence, \eqref{meckebou} is bounded by
	\begin{align*}
		\gamma \mu_{d-1}(S_{\bm{H}} \cap S_{\bm{G}})\, \E g(\bm{H},\eta_{\bm{H}}).
	\end{align*}
	From \eqref{mubou} we find that $\mu_{d-1}(S_{\bm{H}} \cap S_{\bm{G}}) \le \frac{4 (\cos \alpha)^{-4}r(\bm{H}) r(\bm{G}) \omega_{d-1}}{\omega_d \|z(\bm{H})-z(\bm{G})\|}$. Similarly to the bound of $E_5$ in the proof of Theorem \ref{thmr}, we conclude that \eqref{E22} (and hence also $E_2$) is bounded by $c_4 n^{-1/d} (\log n)^2$.
	
	{\em The estimate of $E_4$.} Fix $\delta>0$ and let $r_n:=\frac{\log n}{2\gamma}$. We partition the integration area of $E_4$ into the Borel sets
	\begin{align*}
		I_1&=\{(\bm{H},\bm{G})\in \mathbb{H}^{2d+2}:r(\bm{H}) \vee r(\bm{G})>(2+\delta) r_n\},\\
		I_2&=\{(\bm{H},\bm{G})\in \mathbb{H}^{2d+2}: r(\bm{H}) \wedge r(\bm{G})>\frac{r_n}{2},r(\bm{H}) \vee r(\bm{G})\le (2+\delta)r_n,\\
		&\qquad\|z(\bm{H})-z(\bm{G})\|\le (\log n)^2\},\\
		I_3&=\{(\bm{H},\bm{G})\in \mathbb{H}^{2d+2}:r(\bm{H}) \wedge r(\bm{G})\le \frac{r_n}{2},r(\bm{H}) \vee r(\bm{G})\le (2+\delta) r_n,\\
		&\qquad\|z(\bm{H})-z(\bm{G})\|\le (\log n)^2\},\\
		I_4&=\{(\bm{H},\bm{G})\in \mathbb{H}^{2d+2}:r(\bm{H}) \vee r(\bm{G})\le (2+\delta)r_n, \|z(\bm{H})-z(\bm{G})\|> (\log n)^2\},
	\end{align*}
	where $a \vee b:=\max(a,b)$ and $a \wedge b  :=\min(a,b)$ for $a,b \in \mathbb{R}$. This yields for $E_4$ the bound
	\begin{align*}
		&	\frac{2\gamma^{2d+2}}{((d+1)!)^2}\Big(\int_{I_1} \mathbf 1\{z(\bm{H}), z(\bm{G})\in W_n\}\,e^{-\gamma \mu_{d-1}(\mathbb{H}_{B(\bm{H})}\cup \mathbb{H}_{B(\bm{G})})} \,\mu_{d-1}^{2d+2}(\mathrm{d}(\bm{H},\bm{G}))\\
		&+ \int_{I_2} \mathbf 1\{z(\bm{H}), z(\bm{G})\in W_n\} \mathbf 1\{r(\bm{H})+r(\bm{G})\le\|z(\bm{H})-z(\bm{G})\|\} \, \mathrm{e}^{-\gamma \mu_{d-1}(\mathbb{H}_{B(\bm{H})}\cup \mathbb{H}_{B(\bm{G})})}\\ 
		&\qquad \times\mu_{d-1}^{2d+2}(\mathrm{d}(\bm{H},\bm{G}))\\
		&+ \int_{I_3} \mathbb{E} \tilde{g}(\bm{H},\eta_{\bm{H},\bm{G}})\,\tilde{g}(\bm{G},\eta_{\bm{H},\bm{G}})\,\mu_{d-1}^{2d+2}(\mathrm{d}(\bm{H},\bm{G}))\\
		& +\int_{I_4}  \mathbb{E} \mathbf{1}\{\eta_{(\bm{H},\bm{G})}\cap S_{\bm{H}} \cap S_{\bm{G}} \neq \emptyset\} \tilde {g}(\bm{H},\eta_{\bm{H},\bm{G}})\,\tilde {g}(\bm{G},\eta_{\bm{H},\bm{G}})\,\mu_{d-1}^{2d+2}(\mathrm{d}(\bm{H},\bm{G}))\Big).
	\end{align*}
	From Lemma \ref{Le:le1}(a) (with $R:=\frac{(2+\delta)\log n}{2 \gamma}$) we find that the integral over $I_1$ is bounded by $c_4 n^{-\delta} \log n$. By Lemma \ref{Le:le1}(b) (with $a:=(2+\delta)^{-1} (2-L(\frac{1}{4+2\delta}))^{-1/2}$ and $D:=(\log n)^2$) we have that the integral over $I_2$ is bounded by $c_5n^{1-(2+\delta)a(2-L(a))} (\log n)^{2d}$. To see that the exponent of $n$ is negative, observe that the fact that $L(\cdot)\in (0,1)$ yields $a> \frac{1}{4+2\delta}$. Since $L$ is decreasing, this implies that
	\begin{align*}
		(2+\delta)a(2-L(a))=\frac{2-L(a)}{\sqrt{2-L(\frac{1}{4+2\delta})}}\ge\sqrt{2-L(a)}>1.
	\end{align*}
	The integral over $I_3$ is by Lemma \ref{Le:exc} (with the same choices of $a$, $D$ and $R$ as above) bounded by
	\begin{align*}
		&c_6 n (\log n)^{2d+2} \mathbb{P}\Bigg(n^{-1}G(\Sigma(Z))>c,\,\vt(Z)>\frac{\sqrt{2-L(\frac{1}{4+2\delta})}\tau \gamma G^{-1}(nc)^{1/k}- \log n}{\sqrt{2-L(\frac{1}{4+2\delta})}\tau \gamma G^{-1}(nc)^{1/k}+ \log n}\Bigg)\\
		&\quad \le c_7 (\log n)^{2d+2}  \mathbb{P}\Bigg(\vt(Z)>\frac{\sqrt{2-L(\frac{1}{4+2\delta})}\tau \gamma G^{-1}(nc)^{1/k}- \log n}{\sqrt{2-L(\frac{1}{4+2\delta})}\tau \gamma G^{-1}(nc)^{1/k}+ \log n}\, \Bigg| \, n^{-1}G(\Sigma(Z))>c\Bigg).
	\end{align*}
	Since $\sqrt{2-L(\frac{1}{4+2\delta})}>1$ and $\tau \gamma G^{-1}(nc)^{1/k} \sim \log n$ by \eqref{fgass}, it follows from \eqref{parZ} and \eqref{Ken} that the integral over $I_3$ is bounded by $c_7 n^{-b}$ for some $b>0$. We split the integral over $I_4$ into
	\begin{align}
		&\int_{I_4}  \mathbf{1}\{\delta_{(\bm{H},\bm{G})}\cap S_{\bm{H}} \cap S_{\bm{G}} \neq \emptyset\} \mathbb{E} \tilde {g}(\bm{H},\eta_{\bm{H},\bm{G}})\,\tilde {g}(\bm{G},\eta_{\bm{H},\bm{G}})\,\mu_{d-1}^{2d+2}(\mathrm{d}(\bm{H},\bm{G}))\label{eqn:E4I4a}\\
		&\quad +\int_{I_4}  \mathbb{E} \mathbf{1}\{\eta\cap S_{\bm{H}} \cap S_{\bm{G}} \neq \emptyset\} \tilde {g}(\bm{H},\eta_{\bm{H},\bm{G}})\,\tilde {g}(\bm{G},\eta_{\bm{H},\bm{G}})\,\mu_{d-1}^{2d+2}(\mathrm{d}(\bm{H},\bm{G})).\label{eqn:E4I4b}
	\end{align}
	For \eqref{eqn:E4I4a} we obtain from Lemma \ref{Le:E2bou} with $\mu=0$ (where we use that $\frac{\max(r(\bm{H}),r(\bm{H}_I,\bm{G}))}{\|z(\bm{H})-z(\bm{H}_I,\bm{G})\|}$ becomes on $I_4$ arbitrarily small for $n$ large enough) the bound
	\begin{align*}
		\int_{I_4}  \mathbf{1}\{\delta_{(\bm{H},\bm{G})}\cap S_{\bm{H}} \cap S_{\bm{G}} \neq \emptyset\} \mathbb{E} 
		{g}(\bm{H},\eta_{\bm{H}} )\,\mathbb{E} {g}(\bm{G},\eta_{\bm{G}})\,\mu_{d-1}^{2d+2}(\mathrm{d}(\bm{H},\bm{G})).
	\end{align*}
	From here we can proceed as in the bound of \eqref{E21} and find that \eqref{eqn:E4I4a} is bounded by $c_8 n^{-1/d} \log n$. 
	
	Now we discuss \eqref{eqn:E4I4b}. 
	First notice that \eqref{eqn:E4I4b} is by \eqref{meckebou} (with $\tilde {g}(\bm{H},\eta_{\bm{H},\bm{G}})\,\tilde {g}(\bm{G},\eta_{\bm{H},\bm{G}})$ instead of $\tilde {g} (\bm{H},\eta_{\bm{H}})$) bounded by
	\begin{align*}
		\gamma \int_{I_4}  \int_{S_{\bm{H}} \cap S_{\bm{G}}}\mathbb{E}  \tilde {g}(\bm{H},\eta_{\bm{H},\bm{G}}+\delta_H)\,\tilde {g}(\bm{G},\eta_{\bm{H},\bm{G}}+\delta_H)\,\mu_{d-1}(\mathrm{d}H)\,\mu_{d-1}^{2d+2}(\mathrm{d}(\bm{H},\bm{G})).
	\end{align*}
	Here we use Lemma \ref{Le:E2bou} with $\mu:=\delta_H$ and obtain the bound
	\begin{align}
		2\gamma\int_{I_4} \mu_{d-1}(S_{\bm{H}} \cap S_{\bm{G}})\mathbb{E}g(\bm{H},\eta_{\bm{H}}) \mathbb{E}g(\bm{G},\eta_{\bm{G}})\, \mu_{d-1}^{2d+2}(\mathrm{d}(\bm{H},\bm{G})).\label{eqn:E4I4bref}
	\end{align}
	Now we invoke the bound $\mu_{d-1}(S_{\bm{H}} \cap S_{\bm{G}}) \le \frac{4 (\cos \alpha)^{-4}r(\bm{H}) r(\bm{G}) \omega_{d-1}}{\omega_d \|z(\bm{H})-z(\bm{G})\|}$ from \eqref{mubou}. Introducing spherical coordinates, we find analogously to the bound of \eqref{E22} that \eqref{eqn:E4I4b} is bounded by $c_9n^{-1/d}(\log n)^2$.
	
	{\em The estimate of $E_5$.} We partition the integration area of $E_5$ into the same sets as for $E_4$. This gives the bound 
	\begin{align*}
		&	\frac{2\gamma^{2d+2}}{((d+1)!)^2}\Big(\int_{I_1} \mathbf 1\{z(\bm{H}), z(\bm{G})\in W_n\}\,e^{-\gamma \mu_{d-1}(\mathbb{H}_{B(\bm{H})}\cup \mathbb{H}_{B(\bm{G})})} \,\mu_{d-1}^{2d+2}(\mathrm{d}(\bm{H},\bm{G}))\\
		&+ \int_{I_2} \mathbf 1\{z(\bm{H}), z(\bm{G})\in W_n\} \mathbf 1\{r(\bm{H})+r(\bm{G})\le\|z(\bm{H})-z(\bm{G})\|\} \,\mathrm{e}^{-\gamma \mu_{d-1}(\mathbb{H}_{B(\bm{H})}\cup \mathbb{H}_{B(\bm{G})})}\\ 
		&\qquad \times \mu_{d-1}^{2d+2}(\mathrm{d}(\bm{H},\bm{G}))\\
		&+ \int_{I_3} \mathbb{E} \tilde{g}(\bm{H},\eta_{\bm{H},\bm{G}})\,\tilde{g}(\bm{G},\eta_{\bm{H},\bm{G}})\,\mu_{d-1}^{2d+2}(\mathrm{d}(\bm{H},\bm{G}))\\
		&+\int_{I_4} \P(\eta\cap S_{\bm{H}} \cap S_{\bm{G}} \neq \emptyset) \mathbb{E}  \tilde {g}(\bm{H},\eta_{\bm{H}})\,\tilde {g}(\bm{G},\eta_{\bm{G}})\,\mu_{d-1}^{2d+2}(\mathrm{d}(\bm{H},\bm{G}))\Big).
	\end{align*}
	The integrals over $I_1$, $I_2$ and $I_3$ can be bounded as the analogous terms in the estimate of $E_4$. For the integral over $I_4$ we note that $\P(\eta\cap S_{\bm{H}} \cap S_{\bm{G}} \neq \emptyset) \le \gamma \mu_{d-1}(S_{\bm{H}} \cap S_{\bm{G}})$ and can thus argue as for \eqref{eqn:E4I4bref}.
	
	{\em The estimate of $E_6$.} Analogously to the bounds of $E_4$ and $E_5$, we split the integration area of $E_6$ into the sets $I_1$, $I_2$, $I_3$ and $I_4$ (with $\bm{G}$ replaced by $(\bm{H}_I,\bm{G})$). As above, we apply Lemma \ref{Le:le1} and Lemma \ref{Le:exc} to bound the integrals over $I_1$, $I_2$ and $I_3$. For the integral over $I_4$ we use Lemma \ref{Le:E2bou} and find for $n$ large enough (by which we mean that $\frac{\max(r(\bm{H}),r(\bm{H}_I,\bm{G}))}{\|z(\bm{H})-z(\bm{H}_I,\bm{G})\|}\le \frac{(2+\delta)\log n}{2\gamma (\log n)^2}$  is small enough to apply Lemma \ref{Le:E2bou}) the bound
	\begin{align}
		&	\frac{4\gamma^{d+1}}{(d+1)!} \sum_{\emptyset \subsetneq I \subsetneq \{1,\dots,d+1\}} \frac{\gamma^{d+1-|I|}}{(d+1-|I|)!}  \int_{\mathbb{H}^{d+1}}\int_{\mathbb{H}^{d+1-|I|}}  \mathbb{E} {g}(\bm{H},\eta_{\bm{H}})  \mathbf 1\{R'(\bm{H},\eta)\le r(\bm{H})_\alpha\}  \nonumber\\
		&\quad \times \, \mathbb{E}g((\bm{H}_I,\bm{G}),\eta_{\bm{H}_I,\bm{G}})  \mathbf 1\{R'((\bm{H}_I,\bm{G}),\eta)\le r(\bm{H}_I,\bm{G})_\alpha\} \, \mu_{d-1}^{d+1-|I|}(\mathrm{d}\bm{G})\,\mu_{d-1}^{d+1}(\mathrm{d}\bm{H}),\label{E6s2}
	\end{align}
	where we recall the notation $r_\alpha:= \frac{r}{(\cos \alpha) (\cos 3\alpha)}$ for $r>0$ and $\alpha \in (0,\frac \pi 6)$ from Section 7. By homogeneity of $\Sigma$ and monotonicity under set inclusion,  $\Sigma(C(\bm{H},\eta))>G^{-1}(nc)$ implies that $\Sigma(B^d)\,R'(\bm{H},\eta)^k > G^{-1}(nc)$. Since $r(\bm{H})_\alpha \ge R'(\bm{H},\eta)$ and $\tau=2\Sigma(B^d)^{-1/k}$, this yields that $r(\bm{H})>\frac{\tau((\cos \alpha) (\cos 3\alpha))^{1/k}}{2} G^{-1}(ny)^{1/k}$. We apply the same argument to the terms with $(\bm{H}_I,\bm{G})$ instead of $\bm{H}$ and assume (at the cost of a factor 2) that $r(\bm{H})\ge r(\bm{H}_I,\bm{G})$. Thus we arrive for \eqref{E6s2} at the bound
	\begin{align}
		&\frac{8\gamma^{d+1}}{(d+1)!} \sum_{\emptyset \subsetneq I \subsetneq \{1,\dots,d+1\}} \frac{\gamma^{d+1-|I|}}{(d+1-|I|)!}  \int_{\mathbb{H}^{d+1}}\int_{\mathbb{H}^{d+1-|I|}} \mathbf 1\{z(\bm{H})\in W_n\} \mathbf 1\{z(\bm{H}_I,\bm{G})\in W_n\}  \nonumber\\
		&\quad \times \,\mathbf 1\Big\{r(\bm{H})\ge r(\bm{H}_I,\bm{G})>\frac{\tau((\cos \alpha) (\cos 3\alpha))^{1/k}}{2} G^{-1}(nc)^{1/k}\Big\} \,\mathrm{e}^{-2 \gamma r(\bm{H})-2\gamma r(\bm{H}_I,\bm{G})}  \nonumber\\ 	
		&\quad \times \,\mu_{d-1}^{d+1-|I|}(\mathrm{d}\bm{G})\,\mu_{d-1}^{d+1}(\mathrm{d}\bm{H}). \label{eqn:E6lem12}
	\end{align}
	From \eqref{asyyn} we have that $\tau \gamma G^{-1}(nc)^{1/k} \sim \log n$ as $n \to \infty$ for all $c>0$. Hence, we can bound \eqref{eqn:E6lem12} analogously to \eqref{rE6s11}. If $\alpha>0$ is small enough, we find that \eqref{eqn:E6lem12} (and hence $E_6$) is bounded by $c_{11} n^{-b}$ for some $b>0$. This finishes the proof of Theorem \ref{thmsigma}.
\end{proof}

 \end{document}